\newcommand{\PropIdentMin}{3.5}
\newcommand{\LemOrthInv}{5.3}
\newcommand{\PropMoveRefl}{6.2}
\newcommand{\LemMinEll}{6.4}
\newcommand{\PropEllRefl}{6.6}
\theoremstyle{plain}
\newtheorem{thm}{Theorem}[section]
\newtheorem{main}{Theorem}
\newtheorem{lem}[thm]{Lemma}
\newtheorem{prop}[thm]{Proposition}
\newtheorem{cor}[thm]{Corollary}
\theoremstyle{definition}
\newtheorem{defn}[thm]{Definition}
\newtheorem{rem}[thm]{Remark}
\newtheorem{exmp}[thm]{Example}
\newcommand{\N}{\mathbb{N}}
\newcommand{\Z}{\mathbb{Z}}
\newcommand{\R}{\mathbb{R}}
\newcommand{\fix}{\textsc{Fix}}
\newcommand{\ms}{\textsc{Min}}
\newcommand{\dir}{\textsc{Dir}}
\newcommand{\mov}{\textsc{Mov}}
\newcommand{\lin}{\textsc{Lin}}
\newcommand{\aff}{\textsc{Aff}}
\newcommand{\spn}{\textsc{Span}}
\newcommand{\isom}{\textsc{Isom}}
\newcommand{\cay}{\textsc{Cay}}
\newcommand{\cox}{\textsc{Cox}}
\newcommand{\art}{\textsc{Art}}
\newcommand{\dart}{\textsc{Art${}^*$}}
\newcommand{\inv}{\textrm{inv}}
\newcommand{\wt}{\widetilde}
\newcommand{\onto}{\twoheadrightarrow}
\newcommand{\lr}{\ell_R}\newcommand{\ls}{\ell_S}\newcommand{\lre}{\ell_{R_E}}
\newcommand{\join}{\vee}
\newcommand{\meet}{\wedge}
\newcommand{\bigjoin}{\bigvee}
\newcommand{\bigmeet}{\bigwedge}
\newcommand{\bc}{\begin{center}}\newcommand{\ec}{\end{center}}
\newcommand{\bt}{\begin{tabular}}\newcommand{\et}{\end{tabular}}
\newcommand{\drawEdge}[2]{\draw[-,thick] #1--#2;}
\newcommand{\drawDashEdge}[2]{\draw[-,dashed,thick] #1--#2;}
\newcommand{\drawArrow}[3]{
  \draw[-,thick] ($#1!.45!#2$)--($#1!.55!#2+#3$);
  \draw[-,thick] ($#1!.45!#2$)--($#1!.55!#2-#3$);
}
\newcommand{\drawTripleEdge}[3]{
  \draw[-,thick] #1--#2; 
  \draw[-,thick,yshift=.5mm] #1--#2; 
  \draw[-,thick,yshift=-.5mm] #1--#2;
  \drawArrow{#1}{#2}{#3}
}
\newcommand{\drawDoubleEdge}[3]{
  \draw[-,thick,yshift=.3mm] #1--#2; 
  \draw[-,thick,yshift=-.3mm] #1--#2;
  \drawArrow{#1}{#2}{#3}
}
\newcommand{\drawDashDoubleEdge}[3]{
  \draw[-,dashed,thick,yshift=.3mm] #1--#2; 
  \draw[-,dashed,thick,yshift=-.3mm] #1--#2;
  \drawArrow{#1}{#2}{#3}
}
\newcommand{\drawInfEdge}[2]{
  \drawDashEdge{#1}{#2} 
  \node[anchor=south] at ($#1!.5!#2$) {$\infty$};
}
\newcommand{\drawDotEdge}[2]{
  \fill ($#1!.3!#2$) circle (.4mm);
  \fill ($#1!.5!#2$) circle (.4mm);
  \fill ($#1!.7!#2$) circle (.4mm);
}
\newcommand{\drawRegDot}[1]{\fill #1 circle (.7mm);\fill[color=black!80] #1 circle (.5mm);}
\newcommand{\drawSpeDot}[1]{\fill #1 circle (1mm);\fill[color=white] #1 circle (.7mm);}
\newcommand{\drawESpDot}[1]{\fill #1 circle (1mm);\fill[color=red!70] #1 circle (.7mm);}
\begin{document}
%%%%%%%%%%%%%%%%

\title[Dual Artin groups of euclidean type]{Dual euclidean Artin
  groups and the failure of the lattice property}

\author{Jon McCammond}
\address{Dept. of Math., University of California, Santa Barbara, CA
  93106} 
\thanks{Support by the National Science Foundation is gratefully
  acknowledged}
\date{\today}
\email{jon.mccammond@math.ucsb.edu}

\begin{abstract}
  The irreducible euclidean Coxeter groups that naturally act
  geometrically on euclidean space are classified by the well-known
  extended Dynkin diagrams and these diagrams also encode the modified
  presentations that define the irreducible euclidean Artin groups.
  These Artin groups have remained mysterious with some exceptions
  until very recently.  Craig Squier clarified the structure of the
  three examples with three generators more than twenty years ago and
  Fran\c{c}ois Digne more recently proved that two of the infinite
  families can be understood by constructing a dual presentation for
  each of these groups and showing that it forms an infinite-type
  Garside structure.  In this article I establish that none of the
  remaining dual presentations for irreducible euclidean Artin groups
  corrspond to Garside structures because their factorization posets
  fail to be lattices.  These are the first known examples of dual
  Artin presentations that fail to form Garside structures.
  Nevertheless, the results presented here about the cause of this
  failure form the foundation for a subsequent article in which the
  structure of euclidean Artin groups is finally clarified.
\end{abstract}

\subjclass[2010]{20F36}
\keywords{euclidean Artin groups, Garside structures, dual
  presentations, lattices}

\maketitle

There is an irreducible Artin group of euclidean type for each of the
extended Dynkin diagrams.  In particular, there are four infinite
families, $\wt A_n$, $\wt B_n$, $\wt C_n$ and $\wt D_n$, known as the
\emph{classical} types plus five remaining \emph{exceptional} examples
$\wt E_8$, $\wt E_7$, $\wt E_6$, $\wt F_4$, and $\wt G_2$.  Most of
these groups have been poorly understood until very recently.  Among
the few known results are a clarification of the structure of the
Artin groups of types $\wt A_2$, $\wt C_2$ and $\wt G_2$ by Craig
Squier in \cite{Squier87} and two papers by Fran\c{c}ois Digne
\cite{Digne06,Digne12} proving that the Artin groups of type $\wt A_n$
and $\wt C_n$ have dual presentations that are Garside structures.
Our main result is that Squier's and Digne's examples are the only
ones that have dual presentations that are Garside structures.

\begin{main}[Dual presentations and Garside structures]\label{main:dual}
  The unique dual presentation of $\art(\wt X_n)$ is a Garside
  structure when $X$ is $C$ or $G$ and it is not a Garside structure
  when $X$ is $B$, $D$, $E$ or $F$.  When the group has type $A$ there
  are distinct dual presentations and the one investigated by Digne is
  the only one that is a Garside structure.
\end{main}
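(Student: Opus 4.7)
The plan is to split the theorem into its positive and negative halves and attack them separately. The positive half asserts that the dual presentation is a Garside structure for types $\wt C_n$ and $\wt G_2$, and for the Coxeter element of $\wt A_n$ investigated by Digne. This half follows directly from the work cited earlier in the introduction: Squier \cite{Squier87} settles $\wt G_2$, and Digne \cite{Digne06,Digne12} settles $\wt C_n$ and his chosen Coxeter element in $\wt A_n$. The substantive new content is the negative half, which asserts that the interval $[\id,w]$ in the absolute order fails to be a lattice --- and hence the dual presentation fails to be a Garside structure --- for every remaining type and every remaining Coxeter element, that is, for $\wt B_n$ with $n \geq 3$, for $\wt D_n$, for $\wt E_6$, $\wt E_7$, $\wt E_8$, and $\wt F_4$, and for every non-Digne Coxeter element of $\wt A_n$.

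To prove the negative half, I would exhibit, in each of these cases, two elements $a,b \in [\id,w]$ with two distinct minimal common upper bounds in $[\id,w]$. The tools developed earlier in the paper make this tractable: Proposition \PropIdentMin\ characterises minimal reflection factorisations; Lemma \LemOrthInv\ gives an orthogonality invariant relating fixed spaces and moved spaces; Proposition \PropMoveRefl\ controls when a reflection can be inserted into or removed from a factorisation; and Lemma \LemMinEll\ together with Proposition \PropEllRefl\ pin down the elliptic portion of the interval. Using these, the question of whether two reflections $r,s \leq w$ have a unique join in $[\id,w]$ reduces to a question about the affine geometry of their fixed hyperplanes relative to the axis and moved subspace of $w$.

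The most natural witnesses in each failing type are pairs of reflections whose fixed hyperplanes are parallel --- meeting only at infinity --- so that the length-two elements lying above both $r$ and $s$ include two distinct translations (or glide rotations), each of which can be verified to lie in $[\id,w]$ via the move/fix criteria above. In the classical cases $\wt B_n$ and $\wt D_n$ I would locate such a pair inside a small-rank parabolic subsystem (a $\wt B_3$ sub-configuration inside $\wt B_n$, a $\wt D_4$ sub-configuration inside $\wt D_n$) and promote it to the ambient interval using the obvious embedding. For the exceptional types $\wt F_4$ and the three $\wt E$'s the same idea should work using a hyperbolic $\wt D$ or $\wt B$ sub-diagram as a source of the witness; the finitely many Coxeter classes at each rank make verification a bounded check. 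For non-Digne Coxeter elements of $\wt A_n$, an annular noncrossing-partition model of $[\id,w]$ should expose an arc configuration whose join is not well-defined, translating back into a non-unique upper bound.

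The main obstacle will be assembling these case-by-case witnesses into a uniform argument, and in particular justifying the exceptional types without brute-force enumeration. I expect that the orthogonality invariant of Lemma \LemOrthInv\ combined with the move/fix dichotomy of Propositions \PropMoveRefl\ and \PropEllRefl\ will isolate a single geometric feature shared by all the failing types --- essentially, the coexistence of an elliptic component and a translational component in $w$ with incompatible incidence data --- that is absent from $\wt C_n$, $\wt G_2$, and Digne's Coxeter element of $\wt A_n$. Making this intrinsic dichotomy precise, and verifying that it cuts exactly along the boundary predicted by the theorem, is the delicate step.
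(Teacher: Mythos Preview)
Your overall architecture is right---split into positive and negative halves, and for the negative half exhibit bowties---but the specific witness you propose cannot work, and the positive half for $\wt G_2$ is not actually settled by Squier.

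For $\wt G_2$: Squier's paper predates Garside theory and does not show that the dual presentation is Garside. The paper proves this itself (Theorem~\ref{thm:g2}) by listing the elements of $[1,w]$ explicitly and checking that any two rank-$2$ elements have a well-defined meet.

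For the negative half, your proposed bowtie has two parallel reflections $r,s$ with common root $\alpha$ sitting below two distinct translations in $[1,w]$. This configuration does not occur. A translation $t_\mu$ lies below the Coxeter element $w$ iff $\mu \in \mov(w)$, and it lies above $r$ iff $\mu \in \spn(\alpha)$. If $\alpha$ is horizontal then $\spn(\alpha) \subset \dir(\mov(w))$, but $\mov(w)$ is a \emph{nonlinear} translate of $\dir(\mov(w))$, so $\spn(\alpha) \cap \mov(w) = \emptyset$ and there is no such translation at all. If $\alpha$ is vertical then $\spn(\alpha)$ meets the affine hyperplane $\mov(w)$ in a single point, giving at most one translation. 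Either way you never get two, and since length-$2$ elliptics above both $r$ and $s$ would have to fix a subspace of $H_r \cap H_s = \emptyset$, there is no bowtie of the shape you describe.

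The paper's bowtie lives one level higher and comes from a different source. One takes a translation $t$ in a \emph{vertical} root direction together with two mutually \emph{orthogonal} horizontal reflections $r,r'$, neither commuting with $t$; then $w' = trr'$ has reflection length $4$, sits below $w$, and $[1,w']$ contains a bowtie whose rank-$1$ elements are two parallel horizontal reflections and whose rank-$3$ elements are two hyperbolic isometries (Proposition~\ref{prop:bowtie}, via Theorem~\ref{thm:hyp-bowtie}). The uniform geometric feature you are hoping to isolate is exactly the existence of such an orthogonal pair $r,r'$: it is available iff the \emph{horizontal root system} $\Phi_\Gamma \cap \dir(L)^\perp$ is reducible (Theorem~\ref{thm:bowtie}). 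The proof of Theorem~\ref{main:dual} then finishes by computing the axis direction and the horizontal root system for each type and checking reducibility directly; no sub-diagram embeddings or annular noncrossing-partition models are used.
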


The proof is made possible by a simple combinatorial model developed
in collaboration with Noel Brady that encodes all minimal length
factorizations of a euclidean isometry into reflections
\cite{BrMc-factor}.  Although the results here are essentially
negative, they establish the foundations for positive results
presented in \cite{McSu-artin-euclid}.  In this subsequent article a
new class of Garside groups are constructed from crystallographic
groups closely related to euclidean Coxeter groups and these new
groups contain euclidean Artin groups as subgroups, thereby clarifying
their algebraic structure.  For a survey of all three articles see
\cite{Mc-artin-survey}.

The article is structured as follows.  The first sections give basic
definitions, define dual Artin groups and dual presentations, and
review the results of \cite{BrMc-factor} on factoring euclidean
isometries into reflections.  The middle sections apply these results
to understand how Coxeter elements of irreducible euclidean Coxeter
groups can be factored into reflections present in the group, thereby
constructing dual presentations.  The final sections record explicit
results on a type-by-type basis, from which the main theorem
immediately follows.  As a final note, I would like to highlight the
fact that the rough outline of the main theorem was established in
collaboration with John Crisp several years ago while I was visiting
him in Dijon.  John has since left mathematics but his central role in
the genesis of this work needs to be acknowledged.

%%%%%%%%%%%%%%%%%%%%%%%%%%%
\section{Basic definitions}
%%%%%%%%%%%%%%%%%%%%%%%%%%%

This short section provides some basic definitions that are included
for completeness.  The terminology roughly follows \cite{DaPr02},
\cite{Humphreys90} and \cite{EC1}.

\begin{defn}[Coxeter groups]\label{def:coxeter}
  A \emph{Coxeter group} is any group $W$ that can be defined by a
  presentation of the following form.  It has a standard finite
  generating set $S$ and only two types of relations.  For each $s\in
  S$ there is a relation $s^2=1$ and for each unordered pair for
  distinct elements $s,t\in S$ there is at most one relation of the
  form $(st)^m=1$ where $m = m(s,t) > 1$ is an integer.  When no
  relation involving $s$ and $t$ occurs we consider $m(s,t)=\infty$.
  A \emph{reflection} in $W$ is any conjugate of an element of $S$ and
  we use $R$ to denote the set of all reflections in $W$.  In other
  words, $R=\{ wsw^{-1} \mid s \in S, w \in W\}$.  This presentation
  is usually encoded in a labeled graph $\Gamma$ called a
  \emph{Coxeter diagram} with a vertex for each $s\in S$, an edge
  connecting $s$ and $t$ if $m(s,t)>2$ and a label on this edge if
  $m(s,t)>3$.  The group defined by the presentation encoded in
  $\Gamma$ is denoted $W = \cox(\Gamma)$.  A Coxeter group is
  \emph{irreducible} when its diagram is connected.
\end{defn}

\begin{defn}[Artin groups]\label{def:artin}
  For each Coxeter diagram $\Gamma$ there is an \emph{Artin group}
  $\art(\Gamma)$ defined by a presentation with a relation for each
  two-generator relation in the standard presentation of
  $\cox(\Gamma)$.  More specifically, if $(st)^m=1$ is a relation in
  $\cox(\Gamma)$ then the presentation of $\art(\Gamma)$ has a
  relation that equates the two length~$m$ words that strictly
  alternate between $s$ and $t$.  Thus $(st)^2 = 1$ becomes $st=ts$,
  $(st)^3=1$ becomes $sts=tst$, $(st)^4=1$ becomes $stst=tsts$, etc.
  There is no relation when $m(s,t)$ is infinite.
\end{defn}

\begin{defn}[Posets]\label{def:poset}
  Let $P$ be a partially ordered set.  If $P$ contains both a minimum
  element and a maximum element then it is \emph{bounded}.  For each
  $Q\subset P$ there is an induced \emph{subposet} structure on $Q$ by
  restricting the partial order on $P$.  A subposet $C$ in which any
  two elements are comparable is called a \emph{chain} and its
  \emph{length} is $\vert C \vert -1$.  Every finite chain is bounded
  and its maximum and minimum elements are its \emph{endpoints}.  If a
  finite chain $C$ is not a subposet of a strictly larger finite chain
  with the same endpoints, then $C$ is \emph{saturated}.  Saturated
  chains of length~$1$ are called \emph{covering relations}.  If every
  saturated chain in $P$ between the same pair of endpoints has the
  same finite length, then $P$ is \emph{graded}.  The \emph{rank} of
  an element $p$ is the length of the longest chain with $p$ as its
  upper endpoint and its \emph{corank} is the length of the longest
  chain with $p$ as its lower endpoint, assuming such chains exists.
  The \emph{dual} $P^*$ of a poset $P$ has the same underlying set but
  the order is reversed, and a poset is \emph{self-dual} when it and
  its dual are isomorphic.
\end{defn}

\begin{defn}[Lattices]\label{def:lattice}
  Let $Q$ be any subset of a poset $P$.  A lower bound for $Q$ is any
  $p\in P$ with $p \leq q$ for all $q\in Q$.  When the set of lower
  bounds for $Q$ has a maximum element, this element is the
  \emph{greatest lower bound} or \emph{meet} of $Q$.  Upper bounds and
  the \emph{least upper bound} or \emph{join} of $Q$ are defined
  analogously.  The meet and join of $Q$ are denoted $\bigmeet Q$ and
  $\bigjoin Q$ in general and $u \meet v$ and $u \join v$ if $u$ and
  $v$ are the only elements in $Q$.  When every pair of elements has a
  meet and a join, $P$ is a \emph{lattice} and when every subset has a
  meet and a join, it is a \emph{complete lattice}.
\end{defn}

\begin{figure}
  \begin{tikzpicture}
    \coordinate (one) at (0,1.2);
    \coordinate (zero) at (0,-1.2);
    \coordinate (a) at (-.6,.4);
    \coordinate (b) at (.6,.4);
    \coordinate (c) at (-.6,-.4);
    \coordinate (d) at (.6,-.4);
    \draw[-,thick] (b)--(one)--(a)--(c)--(zero)--(d);
    \draw[-,thick] (a)--(d)--(b)--(c);
    \fill (one) circle (.6mm) node[anchor=south] {$1$};
    \fill (a) circle (.6mm) node[anchor=east] {$a$};
    \fill (b) circle (.6mm) node[anchor=west] {$b$};
    \fill (c) circle (.6mm) node[anchor=east] {$c$};
    \fill (d) circle (.6mm) node[anchor=west] {$d$};
    \fill (zero) circle (.6mm) node[anchor=north] {$0$};
  \end{tikzpicture}
  \caption{A bounded graded poset that is not a
    lattice.\label{fig:non-lattice}}
\end{figure}
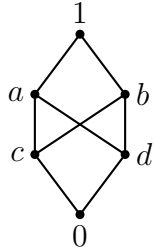

\begin{defn}[Bowties]\label{def:bowtie}
  Let $P$ be a poset.  A \emph{bowtie} in $P$ is a $4$-tuple of
  distinct elements $(a,b:c,d)$ such that $a$ and $b$ are minimal
  upper bounds for $c$ and $d$ and $c$ and $d$ are maximal lower
  bounds for $a$ and $b$.  The name reflects the fact that when edges
  are drawn to show that $a$ and $b$ are above $c$ and $d$, the
  configuration looks like a bowtie.  See
  Figure~\ref{fig:non-lattice}.  It turns out that a bounded graded
  poset $P$ is a lattice iff $P$ contains no bowties \cite{BrMc10}.
  This makes it easy to show that certain subposets are also not
  lattices.  For example, if $P$ is not a lattice because it contains
  a bowtie $(a,b:c,d)$ and $Q$ is any subposet that contains all four
  of these elements, then $Q$ is also not a lattice since it contains
  the same bowtie.
\end{defn}

%%%%%%%%%%%%%%%%%%%%%%%%%%%
\section{Dual Artin groups}
%%%%%%%%%%%%%%%%%%%%%%%%%%%

As mentioned in the introduction, attempts to understand Artin groups
of euclidean type using standard techniques have had limited success.
In this article and its sequel significant progress is made through
the use of dual presentations for Artin groups defined using intervals
in Coxeter groups \cite{McSu-artin-euclid}.

\begin{defn}[Intervals in metric spaces]
  Let $x$, $y$ and $z$ be points in a metric space $(X,d)$.  Borrowing
  from euclidean plane geometry we say that $z$ is \emph{between} $x$
  and $y$ whenever the triangle inequality degenerates into an
  equality.  Concretely $z$ is between $x$ and $y$ when $d(x,z) +
  d(z,y) = d(x,y)$.  The \emph{interval} $[x,y]$ is the collection of
  points between $x$ and $y$ and this includes both $x$ and $y$.
  Intervals can also be endowed with a partial ordering by declaring
  that $u \leq v$ whenever $d(x,u) + d(u,v) + d(v,y) = d(x,y)$.
\end{defn}

Fixing a generating set for a group defines a natural metric and this
leads to the notion of an interval in a group.

\begin{defn}[Intervals in groups]
  A \emph{marked group} is a group $G$ with a fixed generating set $S$
  which, for convenience, we assume is symmetric and injects into $G$.
  The (right) \emph{Cayley graph of $G$ with respect to $S$} is a
  labeled directed graph denoted $\cay(G,S)$ with vertices indexed by
  $G$ and edges indexed by $G \times S$.  The edge $e_{(g,s)}$ has
  \emph{label} $s$, it starts at $v_g$ and ends at $v_{g'}$ where $g'
  = g\cdot s$. There is a natural faithful, vertex-transitive, label
  and orientation preserving left action of $G$ on its Cayley graph
  and these are the only graph automorphisms that preserves labels and
  orientations.  The \emph{distance} $d(g,h)$ is the combinatorial
  length of the shortest path in the Cayley graph from $v_g$ to $v_h$
  and note that the symmetry assumption allows us to restrict
  attention to directed paths.  This defines a metric on $G$ and from
  this metric we get intervals.  More explicitly, for $g,h \in G$, the
  \emph{interval} $[g,h]$ is the poset of group elements between $g$
  and $h$ with $g' \in [g,h]$ when $d(g,g') + d(g',h) = d(g,h)$ and
  $g' \leq g''$ when $d(g,g') + d(g',g'') + d(g'',h) = d(g,h)$.
\end{defn}

The interval $[g,h]$ is a bounded graded poset whose Hasse diagram is
embedded as a subgraph of the Cayley graph $\cay(G,S)$ as the union of
all minimal length directed paths from $v_g$ to $v_h$.  This is
because $g'\in [g,h]$ means $v_{g'}$ lies on some minimal length path
from $v_g$ to $v_h$ and $g' < g''$ means that $v_{g'}$ and $v_{g''}$
both occur on a common minimal length path from $v_g$ to $v_h$ with
$v_{g'}$ occurring before $v_{g''}$.  Because the structure of a
graded poset can be recovered from its Hasse diagram, we let $[g,h]$
denote the edge-labeled directed graph that is visible inside
$\cay(G,S)$.  The left action of a group on its right Cayley graph
preserves labels and distances.  Thus the interval $[g,h]$ is
isomorphic (as a labeled oriented directed graph) to the interval
$[1,g^{-1}h]$.  In other words, every interval in the Cayley graph of
$G$ is isomorphic to one that starts at the identity.  We call
$g^{-1}h$ the \emph{type} of the interval $[g,h]$ and note that
intervals are isomorphic iff they have the same type.

\begin{defn}[Distance order]\label{def:iso-int}
  The \emph{distance order} on a marked group $G$ is defined by
  setting $g' \leq g$ iff $g' \in [1,g]$.  This turns $G$ into a poset
  that contains an interval of every type that occurs in the metric
  space on $G$.  Next, there is a \emph{length function} $\ls\colon
  G\to \N$ that sends each element to its distance from the identity.
  The value $\ls(g) = d(1,g)$ is called the \emph{$S$-length of $g$}
  and it is also the length of the shortest factorization of $g$ in
  terms of elements of $S$.  Because Cayley graphs are homogeneous,
  metric properties of the distance function translate into properties
  of $\ls$.  Symmetry and the triangle inequality, for example, imply
  that $\ls(g) = \ls(g^{-1})$, and $\ls(gh)\leq \ls(g) + \ls(h)$.
\end{defn}

Intervals in groups can be used to construct new groups.

\begin{defn}[Interval groups]\label{def:interval-gps}
  Let $G$ be a group generated by a set $S$ and let $g$ and $h$ be
  distinct elements in $G$.  The \emph{interval group} $G_{[g,h]}$ is
  defined as follows.  Let $S_0$ be the elements of $S$ that actually
  occurs as labels of edges in $[g,h]$.  The group $G_{[g,h]}$ has
  $S_0$ as its generators and we impose all relations that are visible
  as closed loops inside the portion of the Cayley graph of $G$ that
  we call $[g,h]$.  The elements in $S \setminus S_0$ are not included
  since they do not occur in any relation.  More precisely, if they
  were included as generators, they would generate a free group that
  splits off as a free factor.  Thus it is sufficient to understand
  the group defined above.  Next note that this group structure only
  depends on the type of the interval so it is sufficient to consider
  interval groups of the form $G_{[1,g]}$.  For these groups we
  simplify the notation to $G_g$ and say that $G_g$ is the interval
  group obtained by \emph{pulling $G$ apart at $g$}. 
\end{defn}

The interval $[1,g]$ incorporates all of the essential information
about the presentation of $G_g$.  More traditional presentations for
interval groups using relations are established in
\cite{McCammond-cont-braids} and described in the next section.  Dual
Artin groups are examples of interval groups.

\begin{defn}[Dual Artin groups]\label{def:dual-artin}
  Let $W = \cox(\Gamma)$ be a Coxeter group with standard generating
  set $S$ and reflections $R$.  For any fixed total ordering of the
  elements of $S$, the product of these generators in this order is
  called a \emph{Coxeter element} and for each Coxeter element $w$
  there is a dual Artin group defined as follows.  Let $[1,w]$ be the
  interval in the Cayley graph of $W$ with respect to $R$ and let $R_0
  \subset R$ be the subset of reflections that actually occur in some
  minimal length factorizations of $w$.  The \emph{dual Artin group
    with respect to $w$} is the group $W_w = \dart(\Gamma,w)$
  generated by $R_0$ and subject only to those relations that are
  visible inside the interval $[1,w]$.
\end{defn}

\begin{rem}[Artin groups and dual Artin groups]
  In general the relationship between the Artin group $\art(\Gamma)$
  and the dual Artin group $\dart(\Gamma,w)$ is not yet completely
  clear.  It is straightforward to show using the Tits representation
  that the product of the elements in $S$ that produce $w$ is a
  factorization of $w$ into reflections of minimum length which means
  that this factorization describes a directed path in $[1,w]$.  As a
  consequence $S$ is a subset of $R_0$.  Moreover, the standard Artin
  relations are consequences of relations visible in $[1,w]$ (as
  illustrated in \cite{BrMc00}) so that the injection of $S$ into
  $R_0$ extends to a group homomorphism from $\art(\Gamma)$ to
  $\dart(\Gamma,w)$.  When this homomorphism is an isomorphism, we say
  that the interval $[1,w]$ encodes a \emph{dual presentation} of
  $\art(\Gamma)$.
\end{rem}

Every dual Artin group that has been successfully analyzed so far is
isomorphic to the corresponding Artin group and as a consequence its
group structure is independent of the Coxeter element $w$ used in its
construction.  It is precisely because this assertion has not been
proved in full generality that dual Artin groups deserve a separate
name.  One reason that dual Artin groups are of interest is that they
nearly satisfy the requirements to be Garside groups.  In fact, from
the construction it is easy to show that interval $[1,w]$ used to
define a dual Artin group has all of the properties of a Garside
structure with one exception. 

\begin{prop}[Garside structures]\label{prop:lattice-garside}
  Let $\Gamma$ be a Coxeter diagram and let $w$ be a Coxeter element
  for $W = \cox(\Gamma)$.  If the interval $[1,w]$ is a lattice, then
  the dual Artin group $\dart(\Gamma,w)$ is a Garside
  group.
\end{prop}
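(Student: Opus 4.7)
The plan is to verify that the interval $[1,w]$, taken in the Cayley graph of $W$ with respect to the reflection generating set $R$, satisfies all of the standard axioms needed to produce a Garside structure, and that the dual Artin group $\dart(\Gamma,w)$ then arises as the group of fractions of the resulting interval monoid. The bulk of this verification is independent of the lattice hypothesis: the construction of $\dart(\Gamma,w)$ as the interval group $W_w$ already produces a presentation of the candidate Garside monoid $M$ with atoms $R_0$ and defining relations read off from the closed loops in $[1,w]$, and the interval-group machinery developed in \cite{McCammond-cont-braids} guarantees that this monoid is well-defined.

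First I would check, in order, that $[1,w]$ is bounded (with minimum $1$ and maximum $w$), graded (by the reflection length $\lr$), that its atoms are precisely the reflections in $R_0$, and that it is self-dual via the order-reversing involution $u\mapsto u^{-1}w$. Each of these is immediate from the construction together with the conjugation-invariance of $\lr$. The self-duality in particular shows that $w$ is balanced, in the sense that its left and right divisors in $M$ coincide as subsets with $[1,w]$ itself, since $u\in[1,w]$ forces both $u$ and $u^{-1}w$ to realize the reflection length of $w$. Cancellativity of $M$ and the existence of greedy normal forms then reduce, via the standard interval-based recognition theorem for Garside monoids, to one remaining input.

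That remaining input is precisely the lattice property of $[1,w]$. Once $[1,w]$ is assumed to be a lattice, meets and joins in the poset translate into greatest common divisors and least common multiples in $M$, and the recognition theorem identifies $M$ as a Garside monoid with Garside element $w$. Its group of fractions is $\dart(\Gamma,w)$, which is therefore a Garside group. The main subtlety — and the reason the lattice property is singled out in the statement — is that all of the other required properties hold automatically for reflection intervals; the hard part of the paper is not this implication but rather the converse study of when the hypothesis holds, since the obstruction exhibited later for most euclidean types is exactly the failure of $[1,w]$ to be a lattice.
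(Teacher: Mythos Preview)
Your sketch is a reasonable outline of the standard interval-to-Garside argument, but you should be aware that the paper does not actually prove this proposition. Immediately after stating it, the author writes that (apart from the shift from finite to infinite generating sets) the result was already stated by Bessis \cite[Theorem~0.5.2]{Be03}, and directs the reader to \cite{Be03} and the Garside foundations book \cite{DDGKM-garside} for details. In other words, Proposition~\ref{prop:lattice-garside} is treated here as a known black box, not as something argued in the paper.

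That said, your outline matches the content of those references: boundedness, grading by $\lr$, self-duality via $u\mapsto u^{-1}w$, and balancedness all follow formally from conjugation-closure of $R$, and the lattice hypothesis is exactly the missing ingredient that yields gcds and lcms in the interval monoid. One caution: you assert that ``cancellativity of $M$ and the existence of greedy normal forms then reduce, via the standard interval-based recognition theorem,'' to the lattice property alone. In the actual recognition theorems (see \cite{Be03} or \cite{DDGKM-garside}) cancellativity is not entirely free---it is deduced together with the lattice condition via the cube/Ore-type criteria, not independently of it---so if you were writing this out in full you would need to be more careful about which axioms are being verified and in what order. But since the paper simply cites the result, a detailed verification is not expected here.
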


The reader should note that we are using ``Garside structure'' and
``Garside group'' in the expanded sense of Digne
\cite{Digne06,Digne12} rather than the original definition that
requires the generating set to be finite.  In the language of the
``Foundations of Garside theory'' book \cite{DDGKM-garside} these are
``quasi-Garside'' groups and structures.  Since these are the only
types of Garside structures considered here, the prefix ``quasi'' is
dropped but we shall occasionally remind the reader that the interval
$[1,w]$ has infinitely many elements.  The grading of the interval
used to define an interval group substitutes for finiteness of the
generating set in forcing algorithmic processes to terminate.  The
standard proofs are otherwise unchanged.  With the exception of the
shift from finite to infinite generating sets,
Proposition~\ref{prop:lattice-garside} was stated by David Bessis in
\cite[Theorem~0.5.2]{Be03}.  For a more detailed discussion see
\cite{Be03} and particularly the book \cite{DDGKM-garside}.  Interval
groups appear in \cite{DehornoyDigneMichel13} and in \cite[Chapter
  VI]{DDGKM-garside} as the ``germ derived from a groupoid''.  The
terminology is different but the translation is straightforward.

Consequences of being a Garside group include normal forms for
elements and a finite-dimensional classifying space, which imply that
the group has a decidable word problem and is torsion-free
\cite{ChMeWh04,DePa99}.  It was an early hope that every dual Artin
group would be a Garside group, but this article provides the first
explicit examples where this hope fails.  Concretely, when $w$ is
Coxeter element for an irreducible euclidean Coxeter group $W$, we
show that the interval $[1,w]$ in $\cay(W,R)$ is not a lattice except
for the cases already analyzed by Squier and Digne.

%%%%%%%%%%%%%%%%%%%%%%%%%%%%
\section{Dual presentations}
%%%%%%%%%%%%%%%%%%%%%%%%%%%%

The section records some known results about presentations for
interval groups in general and for dual Artin groups in particular.

\begin{defn}[Factorizations]
  In a group $G$ generated by a set $S$, each positive word over $S$
  can be evaluated as a group element and the word \emph{represents}
  the element $g$ to which it evaluates.  In the Cayley graph
  $\cay(G,S)$ a word represents $g$ iff the unique directed path that
  starts at $v_1$ and corresponds to the word ends at the vertex
  $v_g$.  An element is \emph{positive} if it is represented by some
  positive word.  A \emph{minimal positive factorization} of a
  positive element $g$ is a word corresponding to a minimal length
  directed path from $v_1$ to $v_g$ in the Cayley graph $\cay(G,S)$.
  Minimal positive factorizations are called \emph{reduced
    $S$-decompositions} in \cite{Be03}.  Inside an interval every
  directed path corresponds to a positive word whose length is equal
  to the distance between its endpoints.  In particular, there is a
  bijective correspondence between directed paths in $[1,g]$ from
  $v_1$ and $v_g$ and minimal positive factorizations of $g$.
\end{defn}

\begin{defn}[Bigons]
  Let $g$ be a positive element in an $S$-group $G$ and consider two
  directed paths in $[1,g]$ that start at the same vertex and end at
  the same vertex.  The relation $U=V$ that equates the positive words
  $U$ and $V$ corresponding to these paths is called a \emph{bigon
    relation} and it holds in $G_g$ since $UV^{-1}$ is visible as a
  closed loop inside $[1,g]$.  Both positive words necessarily have
  the same length $k$ which we call the \emph{height} of the relation.
  A bigon relation is \emph{big} when its height is $k = d(1,g)$,
  i.e. as big as possible and it is \emph{small} when it is not a
  consequence of those bigon relations of strictly shorter height.
\end{defn}

The various types of bigon relations are sufficient to define interval
groups.  A detailed proof of the following proposition can be found in
\cite{McCammond-cont-braids} but we include a brief version for
completeness.

\begin{prop}[Bigon presentations]\label{prop:bigons}
  If $g$ is a positive element in a group $G$ generated by a set $S$,
  $S_0$ is the subset of $S$ labeling edges in $[1,g]$, and
  $\mathcal{R}_a$, $\mathcal{R}_b$ and $\mathcal{R}_s$ denote the
  collection of all bigon, big bigon and small bigon relations,
  respectively, visible in the interval $[1,g]$, then $\langle S_0
  \mid \mathcal{R}_a \rangle$, $\langle S_0 \mid \mathcal{R}_b
  \rangle$ and $\langle S_0 \mid \mathcal{R}_s \rangle$ are three
  presentations of $G_g$.
\end{prop}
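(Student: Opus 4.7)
The plan is to show that all three proposed sets of relators have the same normal closure in the free group on $S_0$, namely the normal closure of every relation visible as a closed loop in $[1,g]$, which by definition presents $G_g$. Since every bigon relation is in particular a closed loop, and $\mathcal{R}_b, \mathcal{R}_s \subseteq \mathcal{R}_a$, each of the three candidate presentations admits a natural surjection onto $G_g$; the real content lies in proving the reverse inclusions.

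First, I would verify that $\mathcal{R}_a$ is rich enough to derive every closed-loop relation visible in $[1,g]$. A closed loop is a word in $S_0\cup S_0^{-1}$ tracing a sequence of vertices inside the interval, and because $[1,g]$ is a bounded graded poset with edges only between consecutive ranks, the number of up-edges must equal the number of down-edges along any closed loop, forcing a local rank-extremum somewhere on the loop. At such an extremum two covering edges share a vertex and can be straightened using a suitable bigon relation; iterating this reduction expresses any closed loop as a product of conjugates of bigon relators in the free group. This combinatorial reduction is the step I expect to be the main obstacle to make fully precise, although the outline is standard for graded directed graphs and is carried out in detail in \cite{McCammond-cont-braids}.

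Second, I would show that every bigon relation is a consequence of the big bigon relations. Given a bigon $U = V$ of height $k$ coming from two positive paths from a vertex $v_a$ to a vertex $v_b$ in $[1,g]$, I would choose a positive path $L$ from $v_1$ to $v_a$ of length $d(1,a)$ and a positive path $R$ from $v_b$ to $v_g$ of length $d(b,g)$, both of which exist inside $[1,g]$ precisely because $v_a$ and $v_b$ belong to the interval. The concatenations $L\cdot U\cdot R$ and $L\cdot V\cdot R$ are positive factorizations of $g$ of the full length $d(1,g)$, so $LUR = LVR$ is a member of $\mathcal{R}_b$. In the free group on $S_0$ the relator $LUR\cdot(LVR)^{-1}$ freely reduces to $L(UV^{-1})L^{-1}$, a conjugate of the relator $UV^{-1}$ corresponding to $U = V$, so $U = V$ lies in the normal closure of $\mathcal{R}_b$.

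Third, I would prove that $\mathcal{R}_s$ generates every bigon relation by induction on the height $k$. In the base case, the minimum possible height of a nontrivial bigon in $[1,g]$ is $2$, and any such bigon is vacuously small because there are no nontrivial bigons of strictly smaller height it could follow from. For the inductive step, a bigon of height $k$ is either small itself, in which case nothing needs to be shown, or, by the very definition of failing to be small, a consequence of bigons of height strictly less than $k$; the inductive hypothesis then writes each of those shorter bigons as a consequence of small ones. Combining the three steps establishes that the three subsets have the same normal closure and therefore define the same group, finishing the proof.
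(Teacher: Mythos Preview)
Your steps 2 and 3 are correct and match the paper's argument almost verbatim: extending a bigon by prefix and suffix paths to reach $v_1$ and $v_g$ is exactly how the paper derives every bigon from a big one, and the paper's one-line ``by definition'' for the small bigons is precisely the height induction you spell out.

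The gap is in step 1. Your outline locates a local rank-extremum on the closed loop and asserts that the two covering edges meeting there ``can be straightened using a suitable bigon relation,'' but you never say what that bigon is, and in general there is none involving just those two edges. At a local maximum the two directed edges point into a common vertex $v$ from two distinct vertices $u,u'$ of rank one less; this configuration is not a bigon, and there is no reason for any short directed path between $u$ and $u'$ to exist inside $[1,g]$ that would let you replace the peak. The same problem occurs at local minima. Iterating an unspecified reduction does not give a proof.

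The paper's argument sidesteps this entirely by anchoring at the identity rather than working locally: for each vertex on the closed loop choose a directed path in $[1,g]$ from $v_1$ to that vertex, picture the loop as the boundary of a disk with $v_1$ at the center and these chosen paths as radii, and observe that each complementary region is bounded by two radial paths together with a single loop edge. Absorbing that edge into the radial path ending at the lower of the two consecutive loop vertices produces two directed paths from $v_1$ to the same vertex, which is a bigon relation. This fills the entire loop by bigons in one stroke. Your approach can be repaired, but only by introducing exactly these paths from $v_1$; once you do that, the extremum reduction becomes unnecessary.
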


\begin{proof}
  Since $\mathcal{R}_a$, $\mathcal{R}_b$ and $\mathcal{R}_s$ are all
  subsets of the relations visible inside the interval $[1,g]$ it is
  sufficient to show that the remaining relations are consequences of
  these relations.  First, given any closed undirected path in the
  portion of the Cayley graph that is $[1,g]$, one can add a path from
  $v_1$ to each vertex and show that this loop is a consequence of
  bigon relations.  More explicitly, draw the closed loop as simple
  loop in the plane, place $v_1$ in the center and the paths to the
  vertices as subdivided radial line segments.  Every complementary
  region is then a bigon relation.  This shows that $\langle S_0 \mid
  \mathcal{R}_a \rangle$ is a presentation for $G_g$.  Next, by
  extending each bigon with paths from $v_1$ to the start point and
  from the endpoint to $v_g$, it is clear that every bigon is a
  consequence via cancellation of a big bigon relation.  And finally,
  the small bigon relations are, by definition, sufficient to
  establish all big bigon relations.
\end{proof}

When the generating set is closed under conjugation -- as is the case
with the reflections inside a Coxeter group -- there are many bigon
relations of height~$2$ visible in any interval.

\begin{defn}[Hurwitz action]
  If $S$ is any subset of a group $G$ that is closed under conjugation
  and $S^n$ denotes all words of length $n$ over $S$, then there is a
  natural action of the $n$-strand braid group on $S^n$.  The standard
  braid generator $s_i$ replaces the two letter subword $ab$ in
  positions $i$ and $i+1$ with the subword $ca$ where $c = aba^{-1}
  \in S$ and it leaves the letters in the other positions unchanged.
  It is straightforward to check that this action satisfies the
  relations in the standard presentation of the braid group.  Of
  particular interest here is that every word in the same orbit under
  this action evaluates to the same element of $G$ and thus there is a
  well-defined \emph{Hurwitz action} of the $k$-strand braid group on
  the minimal positive factorizations of an element $g$ where $k =
  d(1,g)$.
\end{defn}

Notice that when a standard braid generator replaces $ab$ with $ca$
inside a minimal positive factorization of $g$, $ab=ca$ is a
height~$2$ bigon relation visible in $[1,g]$.  It thus makes sense to
call any height~$2$ bigon relation of the form $ab=ca$ visible inside
$[1,g]$ a \emph{Hurwitz relation}.  Relations of this form are what
Bessis calls \emph{dual braid relations} in \cite{Bessis-top}.  When
the Hurwitz action is transitive on factorizations, these relations
are sufficient to define $G_g$.

\begin{prop}[Hurwitz presentations]\label{prop:hurwitz}
  Let $g$ be a positive element in a group $G$ generated by $S$ and
  let $S_0$ be the subset of $S$ labeling edges in $[1,g]$.  If the
  Hurwitz action is transitive on minimal positive factorizations of
  $g$, then $\langle S_0 \mid \mathcal{R}_h \rangle$ is a presentation
  of $G_g$ where $\mathcal{R}_h$ denotes the collection of Hurwitz
  relations visible in $[1,g]$.
\end{prop}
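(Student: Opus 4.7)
The plan is to derive this from the big bigon presentation supplied by Proposition~\ref{prop:bigons}. Every Hurwitz relation $ab=ca$ is, by construction, a height~$2$ bigon relation visible in $[1,g]$, so these relations do hold in $G_g$. The content is therefore the converse: that all big bigon relations are consequences of Hurwitz relations. Once this is established, Proposition~\ref{prop:bigons} immediately yields $G_g \cong \langle S_0 \mid \mathcal{R}_h \rangle$.

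The central observation, already recorded in the paragraph preceding the statement, is that a single application of a braid generator $s_i$ to a minimal positive factorization $U$ of $g$ swaps a length~$2$ substring $ab$ with $ca$ (where $c = aba^{-1}$) and the resulting equation $ab = ca$ is itself a Hurwitz relation visible in $[1,g]$. Hence if two minimal positive factorizations $U$ and $U'$ of $g$ differ by one braid move, then $U = U'$ is an immediate consequence of a single Hurwitz relation (applied to the two-letter block where they differ, with the rest of the factorization left unchanged).

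Now let $U = V$ be an arbitrary big bigon relation, so that $U$ and $V$ are two minimal positive factorizations of $g$. The hypothesis that the Hurwitz action is transitive on these factorizations produces a finite sequence $U = U_0, U_1, \ldots, U_k = V$ in which each consecutive pair differs by a single braid generator. By the previous paragraph, each step $U_i = U_{i+1}$ follows from a single Hurwitz relation, and concatenating these steps gives a derivation of $U = V$ from $\mathcal{R}_h$. Combined with Proposition~\ref{prop:bigons}, this completes the proof.

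I do not expect a genuine obstacle here: transitivity of the Hurwitz action is precisely tailored to make this telescoping argument go through. The only bookkeeping to check is that every intermediate word $U_i$ is again a minimal positive factorization of $g$, so that the length~$2$ substring being rewritten and its replacement really are edges in $[1,g]$ and the relation applied at that step lies in $\mathcal{R}_h$. This is immediate from the fact that the Hurwitz action preserves the evaluated product and acts on factorizations of fixed length.
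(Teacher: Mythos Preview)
Your argument is correct and follows essentially the same route as the paper: you verify that Hurwitz relations hold in $G_g$, then use transitivity of the Hurwitz action to derive every big bigon relation from $\mathcal{R}_h$, and conclude via Proposition~\ref{prop:bigons}. Your write-up is in fact more explicit than the paper's, which simply asserts that ``the transitivity of the action implies that every big bigon relation is a consequence of Hurwitz relations'' without spelling out the telescoping chain $U = U_0, U_1, \ldots, U_k = V$ that you provide.
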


\begin{proof}
  Let $H$ be the group defined in the statement of the proposition.
  Since the Hurwitz relations are visible in $[1,g]$ they are
  satisfied by $G_g$ and every relation that holds in $H$ also holds
  in $G_g$.  On the other hand, the transitivity of the action implies
  that every big bigon relation is a consequence of Hurwitz relations
  and by Proposition~\ref{prop:bigons} these are sufficient to define
  $G_g$.  Thus every relation that holds in $G_g$ holds in $H$ and the
  two groups are the same.
\end{proof}

Although their results are often stated in a completely different
language, representation theorists have already addressed the question
of whether or not the Hurwitz action is transitive on minimal length
reflection factorizations of a Coxeter element in a Coxeter group in
many contexts.  For example, Crawley-Boevey has shown that
transitivity holds when every $m = m(s,t)$ is either $2$, $3$, or
$\infty$ \cite{Crawley-Boevey92} and Ringel has extended this to
include the crystallographic cases, i.e. where every $m = m(s,t)$ is
$2$, $3$, $4$, $6$, or $\infty$ \cite{Ringel94}.  In 2010 Igusa and
Schiffler in \cite{IgusaSchiffler10} proved transitivity of the
Hurwitz action for all Coxeter groups in complete generality and in
2014 a short proof of this general fact was posted by Baumeister,
Dyer, Stump and Wegener \cite{BDSW-hurwitz}.  See also
\cite{IngallsThomas09} and \cite{Igusa11}.  As a consequence
Proposition~\ref{prop:hurwitz} applies to all of the dual Artin groups
of euclidean type and we can use this to establish that dual euclidean
Artin groups and euclidean Artin groups are isomorphic.  A proof of
Theorem~\ref{thm:dart-art} will be included as an appendix in the
final paper in this series \cite{McSu-artin-euclid}.

\begin{thm}[Dual Artin groups are Artin groups]\label{thm:dart-art}
  For every choice of Coxeter element $w$ in an irreducible euclidean
  Coxeter group $W = \cox(\wt X_n)$, the dual Artin group $\dart(\wt
  X_n,w)$ is naturally isomorphic to the Artin group $\art(\wt X_n)$.
\end{thm}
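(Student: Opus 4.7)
The plan is to exploit the Hurwitz presentation of $\dart(\wt X_n,w)$ provided by Proposition~\ref{prop:hurwitz}, which applies since the Hurwitz action on minimal reflection factorizations of a Coxeter element is transitive in every Coxeter group (Igusa--Schiffler, Baumeister--Dyer--Stump--Wegener). Thus $\dart(\wt X_n,w)$ has the presentation $\langle R_0 \mid \mathcal{R}_h\rangle$, where the relations are the visible Hurwitz identities $r_ir_{i+1}=(r_ir_{i+1}r_i^{-1})r_i$ in $[1,w]$. The natural homomorphism $\phi\colon\art(\wt X_n)\to\dart(\wt X_n,w)$ coming from the inclusion $S\subset R_0$ is already given, and the goal is to construct an inverse $\psi$.

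To define $\psi$, I would fix a simple-reflection factorization $w=s_1s_2\cdots s_n$ yielding the chosen Coxeter element, with corresponding standard Artin generators $\tilde s_1,\ldots,\tilde s_n\in\art(\wt X_n)$. The braid group $B_n$ acts by Hurwitz moves both on $R_0^n$ and on $\art(\wt X_n)^n$, and the projection $\art(\wt X_n)\onto W$ intertwines these actions. For each $r\in R_0$, pick a minimal factorization $(r_1,\ldots,r_n)$ of $w$ with $r=r_i$ and a braid $\beta\in B_n$ carrying $(s_1,\ldots,s_n)$ to $(r_1,\ldots,r_n)$; then set $\psi(r)$ equal to the $i$-th coordinate of $\beta\cdot(\tilde s_1,\ldots,\tilde s_n)$. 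Once $\psi$ is well defined, it automatically sends each Hurwitz relation to a trivial identity in $\art(\wt X_n)$ and thus extends to a group homomorphism; both compositions $\psi\circ\phi$ and $\phi\circ\psi$ are then the identity on the respective generating sets.

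The central technical problem is well-definedness of $\psi$: if a braid $\beta\in B_n$ stabilizes $(s_1,\ldots,s_n)$ under the Hurwitz action on $W^n$, one must show it also stabilizes $(\tilde s_1,\ldots,\tilde s_n)$ coordinatewise in $\art(\wt X_n)^n$. My approach would be to decompose any such $\beta$ into stabilizers whose support touches only a proper sub-factorization of $w$, corresponding to some $u\in[1,w]$ with $u\ne w$. For euclidean Coxeter groups every proper parabolic subgroup is \emph{spherical} (finite), so each such sub-interval $[1,u]$ lives in a finite Coxeter group, and there the required lifting property is part of Bessis' dual braid theory for spherical types. In short, all ``local'' stabilizing braids are accounted for by relations in finite parabolic subintervals, and these lift to $\art(\wt X_n)$ without difficulty.

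The main obstacle is establishing this local decomposition, i.e.\ that the kernel of the Hurwitz action on the $B_n$-orbit of $(s_1,\ldots,s_n)$ is generated by stabilizers supported on proper sub-factorizations. This is precisely where the combinatorial factorization model of \cite{BrMc-factor} and the type-by-type structural results developed later in this paper become essential, and it is striking that this decomposition can succeed even though, by the main theorem of this paper, the full interval $[1,w]$ itself fails to be a lattice for most euclidean types.
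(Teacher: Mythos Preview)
First, note that the paper does not actually prove Theorem~\ref{thm:dart-art}; the sentence immediately preceding it says the proof is deferred to the sequel \cite{McSu-artin-euclid}, so there is no in-paper argument to compare against.

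Your proposal contains a concrete error at its key reduction step. You write that ``for euclidean Coxeter groups every proper parabolic subgroup is spherical (finite), so each such sub-interval $[1,u]$ lives in a finite Coxeter group.'' The first clause is true only for proper \emph{standard} parabolics, and the inference to the second clause fails: elements $u$ strictly below $w$ in $[1,w]$ need not be elliptic at all. A central point of this paper is precisely that $[1,w]$ contains genuinely hyperbolic elements---for instance the translations obtained from pairs of parallel horizontal reflections, and more generally the products $t\,r\,r'$ appearing in Proposition~\ref{prop:bowtie} and Theorem~\ref{thm:bowtie}. For such $u$ the interval $[1,u]$ does not sit inside any finite Coxeter group, so Bessis' spherical dual braid theory is unavailable exactly where you invoke it. These hyperbolic sub-intervals are, moreover, precisely the ones responsible for the lattice failures established in this paper, so they cannot be dismissed as a degenerate edge case. (There is also a minor indexing slip: a Coxeter element of $\cox(\wt X_n)$ is a product of $n+1$ simple reflections, not $n$.)

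Even setting that aside, the step you yourself flag as ``the main obstacle''---that the Hurwitz stabilizer of the simple tuple is generated by stabilizers supported on proper subtuples---is left entirely unargued, and nothing in \cite{BrMc-factor} or in the present paper supplies it. The approach ultimately taken in \cite{McSu-artin-euclid}, as described in the introduction here, is quite different: rather than lifting stabilizers directly, one embeds the situation into a larger crystallographic Garside group where the problematic hyperbolic elements are absorbed, and the isomorphism is deduced from that ambient structure.
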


We conclude this section by noting an elementary consequence of the
Hurwitz action.

\begin{lem}[Rewriting factorizations]\label{lem:rewriting}
  Let $w = r_1 r_2 \cdots r_k$ be a reflection factorization in a
  Coxeter group $W$. For any selection $1 \leq i_1 < i_2 < \cdots <
  i_j \leq k$ of positions there is a length~$k$ reflection
  factorization of $w$ whose first $j$ reflections are $r_{i_1}
  r_{i_2} \cdots r_{i_j}$ and another length~$k$ reflection
  factorization of $w$ where these are the last $j$ reflections in the
  factorization.
\end{lem}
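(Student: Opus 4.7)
The plan is to repeatedly apply the Hurwitz braid action introduced just before the statement. Recall that the standard braid generator $s_\ell$ sends a length-$k$ factorization whose entries in positions $\ell,\ell+1$ are $(a,b)$ to one whose entries there are $(aba^{-1},a)$, while its inverse sends $(a,b)$ to $(b,b^{-1}ab)$. Both preserve the product and the total length, and each touches only the two positions involved. The crucial observation is that $s_\ell$ slides the entry originally in position $\ell$ one step to the right \emph{without altering it} (the other entry is the one that gets conjugated), and symmetrically $s_\ell^{-1}$ slides the entry originally in position $\ell+1$ one step to the left without altering it. Since $R$ is closed under conjugation, the result is again a reflection factorization.

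For the first assertion I would induct on $m = 0,1,\ldots,j$, maintaining a length-$k$ reflection factorization of $w$ whose first $m$ positions are $r_{i_1},\ldots,r_{i_m}$ and whose entries in positions $i_m+1,\ldots,k$ still agree with the original $r_{i_m+1},\ldots,r_k$ (with the convention $i_0 = 0$). The base case $m = 0$ is the given factorization. For the inductive step, apply the sequence $s_{i_{m+1}-1}^{-1}, s_{i_{m+1}-2}^{-1}, \ldots, s_{m+1}^{-1}$. Each of these slides $r_{i_{m+1}}$ one step to the left unchanged, so it lands in position $m+1$ still equal to $r_{i_{m+1}}$. These operations only touch positions between $m+1$ and $i_{m+1}$, so positions $1,\ldots,m$ (already holding $r_{i_1},\ldots,r_{i_m}$) stay put, and positions $i_{m+1}+1,\ldots,k$ are untouched, so in particular $r_{i_{m+2}},\ldots,r_{i_j}$ remain in their original spots ready for the next round.

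The second assertion is symmetric. I would use descending induction on $m = j,j-1,\ldots,1$, at each step moving $r_{i_m}$ from position $i_m$ up to position $k-(j-m)$ by applying $s_{i_m}, s_{i_m+1}, \ldots, s_{k-(j-m)-1}$ in order. Each of these slides $r_{i_m}$ one step to the right unchanged. The round touches only positions $i_m,\ldots,k-(j-m)$, which leaves both the already-placed $r_{i_{m+1}},\ldots,r_{i_j}$ in positions $k-(j-m)+1,\ldots,k$ and the as-yet-untouched $r_{i_{m-1}},\ldots,r_{i_1}$ at their original positions $i_1 < \cdots < i_{m-1} < i_m$ all undisturbed.

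I do not anticipate a genuine obstacle: the argument is essentially bookkeeping, and the strict inequalities $i_1 < i_2 < \cdots < i_j$ in the hypothesis are precisely what guarantee that the window of positions affected by each round of braid operations is disjoint from the positions that earlier or later rounds must preserve.
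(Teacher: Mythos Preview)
Your proposal is correct and follows exactly the route the paper has in mind: the lemma is stated there without proof, merely flagged as ``an elementary consequence of the Hurwitz action,'' and your argument supplies precisely those details by sliding each chosen reflection into place with the appropriate braid generators. The bookkeeping via the inductive invariant (first $m$ slots filled, tail beyond $i_m$ untouched) is clean and makes the disjointness of the affected windows explicit.
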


%%%%%%%%%%%%%%%%%%%%%%%%%%%%%%
\section{Euclidean isometries}
%%%%%%%%%%%%%%%%%%%%%%%%%%%%%%

In order to analyze intervals in euclidean Coxeter groups, we need to
establish some notations for euclidean isometries. As in
\cite{BrMc-factor} and \cite{SnapperTroyer89} we sharply distinguish
between points and vectors.

\begin{defn}[Points and vectors]\label{def:pt-vec}
  Throughout the article, $V$ denotes an $n$-dimensional real vector
  space with a positive definite inner product and $E$ denotes the
  euclidean which is its affine analog where the vector space
  structure of $V$ (in particular the location of the origin) has been
  forgotten.  The elements of $V$ are \emph{vectors} and the elements
  of $E$ are \emph{points}.  We use greek letters for vectors and
  roman letters for points.  There is a uniquely transitive action of
  $V$ on $E$.  Thus, given a point $x$ and a vector $\lambda$ there is
  a unique point $y$ with $x + \lambda = y$ and given two points $x$
  and $y$ there is a unique vector $\lambda$ with $x + \lambda = y$.
  We say that $\lambda$ is the vector from $x$ to $y$.  For any
  $\lambda \in V$, the map $x\mapsto x+\lambda$ is an isometry
  $t_\lambda$ of $E$ that we call a \emph{translation} and note that
  $t_\mu t_\nu = t_{\mu + \nu} = t_\nu t_\mu$ so the set $T_E = \{
  t_\lambda \mid \lambda \in V\}$ is an abelian group.  For any point
  $x \in E$, the map $\lambda \mapsto x+\lambda$ is a bijection that
  identifies $V$ and $E$ but the isomorphism depends on this initial
  choice of a \emph{basepoint} $x$ in $E$.  Lengths of vectors and
  angles between vectors are calculated using the usual formulas and
  distances and angles in $E$ are defined by converting to
  vector-based calculations.
\end{defn}

\begin{defn}[Linear subspaces of $V$]\label{def:lin}
  A \emph{linear subspace} of $V$ is a subset closed under linear
  combination and every subset of $V$ is contained in a unique minimal
  linear subspace called its \emph{span}.  Every subset $U$ has an
  \emph{orthogonal complement} $U^\perp$ consisting of those vectors
  in $V$ orthogonal to all the vectors in $U$.  When $U$ is a linear
  space there is a corresponding orthogonal decomposition $V = U
  \oplus U^\perp$ and the \emph{codimension} of $U$ is the dimension
  of $U^\perp$.  For more general subsets $U^\perp = \spn(U)^\perp$.
  The linear subspaces of $V$ form a bounded graded self-dual complete
  lattice under inclusion that we call $\lin(V)$. The bounding
  elements are clear, the grading is by dimension (in that a
  $k$-dimensional subspace has rank~$k$ and corank~$n-k$), the meet of
  a collection of subspaces is their intersection and their join is
  the span of their union.  And finally, the map sending a linear
  subspace to its orthogonal complement is a bijection that
  establishes self-duality.
\end{defn}

\begin{defn}[Affine subspaces of $E$]\label{def:aff}
  An \emph{affine subspace} of $E$ is any subset $B$ that contains
  every line determined by distinct points in $B$ and every subset of
  $E$ is contained in a unique minimal affine subspace called its
  \emph{affine hull}.  Associated with any affine subspace $B$ is its
  (linear) \emph{space of directions} $\dir(B) \subset V$ consisting
  of the collection of vectors connecting points in $B$.  The
  \emph{dimension} and \emph{codimension} of $B$ is that of its space
  of directions.  The \emph{affine subspaces} of $E$ partially ordered
  by inclusion form a poset we call $\aff(E)$.  It is a graded poset
  that is bounded above but not below since distinct points are
  distinct minimal elements.  It is neither self-dual nor a lattice.
  There is, however, a well-defined rank-preserving poset map $\aff(E)
  \onto \lin(V)$ sending each affine subspace $B$ to its space of
  directions $\dir(B)$.
\end{defn}

\begin{defn}[Standard forms]
  An affine subspace of $V$ is any subspace that corresponds to an
  affine subspace of $E$ under an identification of $V$ and $E$ and
  the subspaces of this form are translations of linear subspaces.  In
  particular, every affine subspace $M$ in $V$ can be written in the
  form $M = t_\mu(U) = U +\mu = \{ \lambda + \mu \mid \lambda \in U\}$
  where $U$ is a linear subspace of $V$.  This representation is not
  unique, since $U + \mu = U$ for all $\mu \in U$, but it can be made
  unique if we insist that $\mu$ to be of minimal length or,
  equivalently, that $\mu$ be a vector in $U^\perp$.  In this case we
  say $U +\mu$ is the \emph{standard form} of $M$.
\end{defn}

The isometries of $E$ form a group $\isom(E)$ and every isometry has
two basic invariants, one in $V$ and the other in $E$.

\begin{defn}[Basic invariants]\label{def:inv}
  Let $w$ be an isometry of $E$.  If $\lambda$ is the vector from $x$
  to $w(x)$ then we say $x$ is \emph{moved by $\lambda$ under $w$}.
  The collection $\mov(w) = \{ \lambda \mid x+\lambda = w(x), x \in
  E\} \subset V$ of all such vectors is the \emph{move-set} of $w$.
  The move-set is an affine subspace and thus has standard form $U
  +\mu$ where $U$ is a linear subspace and $\mu$ is a vector in
  $U^\perp$.  The points in $E$ that are moved by $\mu$ under $w$ are
  those that are moved the shortest distance.  The collection $\ms(w)$
  of all such points is an affine subspace called the \emph{min-set}
  of $w$.  The sets $\mov(w) \subset V$ and $\ms(w) \subset E$ are the
  \emph{basic invariants} of $w$.
\end{defn}

\begin{defn}[Types of isometries]\label{def:types}
  Let $w$ be an isometry of $E$ and let $U+\mu$ be the standard form
  of its move-set $\mov(w)$.  There are points fixed by $w$ iff $\mu$
  is trivial iff $\mov(w)$ is a linear subspace.  Under these
  conditions we say $w$ is \emph{elliptic} and the min-set $\ms(w)$ is
  just the \emph{fix-set} $\fix(w)$ of points fixed by $w$.
  Similarly, $w$ has no fixed points iff $\mu$ is nontrivial iff
  $\mov(w)$ a nonlinear affine subspace of $V$.  Under these
  conditions we say $w$ is \emph{hyperbolic}.
\end{defn}

The names elliptic and hyperbolic come from a tripartite
classification of isometries of nonpositively curved spaces; the third
type, parabolic, does not occur in this context
\cite{BridsonHaefliger99}.  The simplest examples of hyperbolic
isometries are the nontrivial translations as defined in
Definition~\ref{def:pt-vec}.  They can also be characterized as those
isometries whose move-set is a single point or whose min-set is all of
$A$.  The simplest example of an elliptic isometry is a reflection.

\begin{defn}[Reflections]\label{def:reflections}
  A \emph{hyperplane} $H$ in $E$ is an affine subspace of
  codimension~$1$ and there is a unique nontrivial isometry $r$ that
  fixes $H$ pointwise called a \emph{reflection}.  The space of
  directions $\dir(H)$ is a codimension~$1$ linear subspace in $V$ and
  it has a $1$-dimensional orthogonal complement $L$.  The basic
  invariants of $r$ are $\mov(r) = L$ and $\ms(r) = \fix(r) = H$.  The
  set of all reflections is denoted $R_E$.
\end{defn}

%%%%%%%%%%%%%%%%%%%%%%%%
\section{Factorizations}
%%%%%%%%%%%%%%%%%%%%%%%%

This section reviews the structure of intervals in $\isom(E)$ when
viewed as a group generated by the set $R_E$ of all reflections.  In
particular, it introduces the combinatorial models constructed in
\cite{BrMc-factor} that encode the poset structure of these intervals.  The
first thing to note is that the length function with respect to all
reflections is easy to compute using the basic invariants of
isometries, a result known as Scherk's theorem \cite{SnapperTroyer89}.

\begin{thm}[Reflection length]\label{thm:lr}
  The reflection length of an isometry is determined by its basic
  invariants.  More specifically, if $w$ is an isometry of $E$ whose
  move-set is $k$-dimensional, then $\lre(w) = k$ when $w$ is
  elliptic, and $\lre(w) = k+2$ when $w$ is hyperbolic.
\end{thm}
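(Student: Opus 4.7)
The plan is to prove the formula by establishing matching upper and lower bounds on $\lre(w)$, handled in both the elliptic and hyperbolic cases.

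For the upper bound, I would construct explicit factorizations. In the elliptic case, choose a point $p \in \fix(w)$ as an origin to identify $E$ with $V$, so that $w$ becomes an orthogonal linear map. The move-set $\mov(w)$ is then a $k$-dimensional linear subspace $U$, and $V$ decomposes orthogonally as $U \oplus U^\perp$ with $w$ the identity on $U^\perp$ and a fixed-vector-free orthogonal transformation on $U$. The spectral theorem for orthogonal maps writes $w|_U$ as a direct sum of two-dimensional rotation blocks and $-I$ blocks whose dimensions total $k$, and each block factors into reflections of $U$ (two for each rotation, one for each $-I$ block), giving a total of exactly $k$ reflections. Extending these to hyperplane reflections of $V$ through $p$ yields $w = r_1\cdots r_k$. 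In the hyperbolic case, write $\mov(w) = U + \mu$ in standard form with $0\neq \mu \in U^\perp$, and factor $w = t_\mu \circ w_0$ where $w_0$ shares the linear part of $w$ and is elliptic with $\mov(w_0) = U$. The elliptic case supplies $w_0$ as a product of $k$ reflections, and the translation $t_\mu$ is a product of two reflections across parallel hyperplanes perpendicular to $\mu$ and separated by $\|\mu\|/2$, giving $k+2$ reflections total.

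For the lower bound, the key inequality is that if $w = r_1\cdots r_m$ with linear parts $A_i$ and $L_i = \dir(\mov(r_i))$, then the linear part $A = A_1\cdots A_m$ satisfies $\mathrm{im}(A-I) \subseteq L_1+\cdots+L_m$. This follows by induction from $AB-I = A(B-I)+(A-I)$ and the fact that each $A_i$ is orthogonal, hence dimension-preserving. Since $\dir(\mov(w)) = \mathrm{im}(A-I)$, we conclude $\dim \mov(w) \leq m$ and hence $\lre(w) \geq k$. To sharpen this in the hyperbolic case, I would use a parity argument: each reflection has determinant $-1$, so $\det A = (-1)^m$; on the other hand, the orthogonal decomposition $V = \mathrm{im}(A-I)\oplus \ker(A-I)$ shows that $A|_{U}$ is a fixed-vector-free orthogonal map on a $k$-dimensional space, which by the same spectral normal form has determinant $(-1)^k$. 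Hence $m \equiv k \pmod 2$, so $\lre(w) \in \{k, k+2, k+4, \dots\}$. In the elliptic case this meets the upper bound. In the hyperbolic case, a further argument rules out $\lre(w) = k$: if such a factorization existed, then the $k$ lines $L_i$ would span $\dir(\mov(w))$ and be linearly independent, and a direct linear-algebra computation then shows that the $k$ affine hyperplanes $H_i = L_i^\perp + p_i$ intersect nontrivially; any $p\in \bigcap H_i$ would be fixed by every $r_i$, hence by $w$, contradicting hyperbolicity. Together with parity, this forces $\lre(w)\geq k+2$, matching the upper bound.

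The main obstacle is the lower bound in the hyperbolic case, because ruling out $\lre(w) = k$ requires more than the direction-space inequality: the parity observation is essential, and one must carefully distinguish between the affine move-set $\mov(w)$ and its linear direction space when analyzing both the spectral decomposition of $A$ and the affine intersection of the fixed hyperplanes. Once these two ingredients are in place, the rest of the argument is a direct combination of the upper bound with the parity-refined lower bound.
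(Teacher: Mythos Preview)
The paper does not actually supply a proof of this theorem: it is stated as Scherk's theorem and attributed to \cite{SnapperTroyer89} (and implicitly to \cite{BrMc-factor}), with no argument given in the text. So there is no ``paper's own proof'' against which to compare your attempt.

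That said, your proposal is a correct and essentially complete proof. The upper bounds are standard: the elliptic case uses the orthogonal decomposition $V = U \oplus U^\perp$ with $U = \mathrm{im}(A-I)$ and the spectral normal form of $A|_U$, and the hyperbolic case reduces to the elliptic one via the factorization $w = t_\mu \circ w_0$ coming from the standard form $\mov(w) = U + \mu$. For the lower bounds, your induction showing $\mathrm{im}(A-I) \subseteq L_1 + \cdots + L_m$ yields $m \ge k$, and the parity argument $(-1)^m = \det A = (-1)^k$ (using that a fixed-vector-free orthogonal map on a $k$-dimensional space has determinant $(-1)^k$, since its non-real eigenvalues come in conjugate pairs) forces $m \equiv k \pmod 2$. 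The only subtle step is ruling out $m=k$ in the hyperbolic case, and your argument there is sound: equality $m=k$ forces the $k$ one-dimensional normal directions $L_i$ to be linearly independent, so the $k$ affine fixed hyperplanes $H_i$ have a common point, which would then be fixed by $w$, contradicting hyperbolicity.

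In short, your argument is correct and is precisely the classical route to Scherk's theorem; the paper simply quotes the result rather than reproving it.
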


Next, consider the following combinatorially defined posets.

\begin{defn}[Model posets]\label{def:model}
  We construct a global poset $P$ from two types of elements.  For
  each nonlinear affine subspace $M$ in $V$, $P$ contains a
  \emph{hyperbolic} element $h^M$ and for each affine subspace $B$ in
  $A$, $P$ contains an \emph{elliptic} element $e^B$.  We also define
  an \emph{invariant map} $\inv \colon \isom(E) \onto P$ that sends
  $w$ to $h^{\mov(w)}$ when $w$ is hyperbolic and to $e^{\fix(w)}$
  when $w$ is elliptic.  This explains the names and the notation.
  The elements of $P$ are ordered as follows.  First, hyperbolic
  elements are ordered by inclusion and elliptic elements by reverse
  inclusion: $h^M \leq h^{M'}$ iff $M \subset M'$ and $e^B \leq
  e^{B'}$ iff $B \supset B'$.  Next, no elliptic element is ever above
  a hyperbolic element.  And finally, $e^B < h^M$ iff $M^\perp \subset
  \dir(B)$. Note, however, that since $M$ is nonlinear, the vectors
  orthogonal to all of $M$ are also orthogonal to its span, a linear
  subspace whose dimension is $\dim(M) + 1$.  Transitivity is an easy
  exercise.  It was shown in \cite{BrMc-factor} that when $\isom(E)$
  is viewed as a marked group generated by the set $R_E$ of all
  reflections and viewed as a poset under the distance order, the
  invariant map is a rank-preserving order-preserving map from
  $\isom(E)$ to $P$.  As a consequence, for any isometry $w$ the
  invariant map sends isometries in $[1,w]$ to elements less than or
  equal to $\inv(w)$.  Let $P(w)$ denote the subposet of $P$ induced
  by restricting to those elements less than or equal to $\inv(w)$ and
  call $P(w)$ the \emph{model poset for $w$}.
\end{defn}

The following is the main theorem proved in \cite{BrMc-factor}.

\begin{thm}[Model posets]\label{thm:model}
  For each isometry $w$, the invariant map establishes a poset
  isomorphism between the interval $[1,w]$ and the model poset $P(w)$.
  As a consequence, the minimum length reflection factorizations of
  $w$ are in bijection with the maximal chains in $P(w)$.
\end{thm}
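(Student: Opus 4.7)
The plan is to display the restriction $\phi = \inv|_{[1,w]}\colon [1,w] \to P(w)$ as the required poset isomorphism, leveraging the fact (recalled immediately before the theorem from \cite{BrMc-factor}) that $\inv$ is rank-preserving and order-preserving on all of $\isom(E)$.  What remains is to show that $\phi$ is bijective and that its inverse is also order-preserving; once this is done, the stated bijection between maximal chains of $P(w)$ and minimum length reflection factorizations of $w$ follows automatically, because maximal chains in the interval $[1,w]$ tautologically correspond to minimum length $R_E$-factorizations of $w$ and poset isomorphisms send maximal chains to maximal chains.

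For surjectivity I would argue by downward induction on rank inside $P(w)$.  Fix $p \in P(w)$ of rank $j$ and choose a saturated chain $p = p_0 < p_1 < \cdots < p_{k-j} = \inv(w)$ in $P(w)$, where $k = \lre(w)$.  Setting $u_{k-j} = w$, I would construct isometries $u_{k-j}, u_{k-j-1}, \ldots, u_0$ with $\inv(u_i) = p_i$ and $u_i \leq w$ by realizing each covering relation geometrically: at each step the difference in invariant data between $p_i$ and $p_{i-1}$ prescribes a hyperplane whose reflection $r$ satisfies $u_{i-1} = r \cdot u_i$, $\inv(u_{i-1}) = p_{i-1}$, and $\lre(u_{i-1}) = \lre(u_i) - 1$, the last by Theorem~\ref{thm:lr}.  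The three cases that define the order on $P$ (hyperbolic below hyperbolic, elliptic below elliptic, and elliptic below hyperbolic via $M^\perp \subset \dir(B)$) would be handled separately.

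For injectivity, suppose $u, v \in [1,w]$ satisfy $\inv(u) = \inv(v)$; then $u$ and $v$ have equal reflection length and equal move-sets (hyperbolic case) or equal fix-sets (elliptic case).  Using Lemma~\ref{lem:rewriting}, I can produce minimum length reflection factorizations expressing $w = u \cdot u' = v \cdot v'$ with $\lre(u') = \lre(v')$; Theorem~\ref{thm:lr} then forces $\inv(u') = \inv(v')$, and an inductive unraveling of the factorization reduces the problem to a base case where two single reflections with the same fixed hyperplane must coincide.  Order-reflection for $\phi$ proceeds in the same spirit: given $u, v \in [1,w]$ with $\inv(u) \leq \inv(v)$, a rewriting argument via Lemma~\ref{lem:rewriting} produces a single minimum length factorization of $w$ whose prefixes realize both $u$ and $v$, placing $u \leq v$ in the distance order.

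The step I expect to be the main obstacle is surjectivity at mixed elliptic-hyperbolic covering relations, where the combinatorial condition $M^\perp \subset \dir(B)$ must be realized by an explicit reflection $r$ that converts an element with move-set $M'$ into one with fix-set $B'$ while decreasing reflection length by exactly one.  This is the geometric heart of the translation between algebraic factorizations in $\isom(E)$ and the combinatorial poset $P$, and it is presumably where the bulk of the work in \cite{BrMc-factor} is concentrated.
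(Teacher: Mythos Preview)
First, note that the paper does not prove this theorem: it is stated as the main result of \cite{BrMc-factor} and simply quoted, so there is no in-paper argument to compare against.  Your outline must therefore stand on its own.

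Your surjectivity strategy---descending along a saturated chain in $P(w)$ and realizing each covering relation by left-multiplication with a suitably chosen reflection---is sound in shape, and you are right that the elliptic/hyperbolic transition is where the geometric content concentrates.

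The injectivity argument, however, has a genuine gap.  You assert that from $\lre(u') = \lre(v')$, ``Theorem~\ref{thm:lr} then forces $\inv(u') = \inv(v')$.''  It does not: Scherk's theorem computes reflection length \emph{from} the basic invariants, and equal lengths certainly do not force equal move-sets or fix-sets.  Even granting the stronger (true, but not immediate) statement that $\inv(u^{-1}w)$ is determined by $\inv(u)$ and $\inv(w)$ whenever $u \leq w$---which already requires a move-set decomposition lemma from \cite{BrMc-factor}---your ``inductive unraveling'' does not terminate.  Passing from the pair $(u,v)$ to the right-complements $(u^{-1}w,\, v^{-1}w)$ leaves you in the \emph{same} interval $[1,w]$ with the same $w$; no inductive parameter has decreased, and $\lre(u^{-1}w)$ may well exceed $\lre(u)$.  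Nor can you peel reflections off minimal factorizations of $u$ and $v$ one at a time and match them, since nothing forces the leading reflections to coincide.  What is actually needed is a direct construction of the inverse map $P(w) \to [1,w]$: for each $p \in P(w)$ one writes down explicitly the unique isometry in $[1,w]$ with invariant $p$ (in the purely elliptic case, for instance, by restricting the linear part of $w$ to the appropriate subspace and extending by the identity) and then checks that both compositions with $\inv$ are identities.  This bypasses the circularity in your approach and is presumably the route taken in \cite{BrMc-factor}.
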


Theorem~\ref{thm:model} is in sharp contrast with the non-injectivity
of the invariant map in general. There are, for example, many
different rotations that fix the same codimension~$2$ subspace.  Since
it is useful to have a notation for model subposets in the absence of
an isometry, let $P^M$ denote the subposet of $P$ induced by
restricting to those elements less than or equal to $h^M$ for a
nonlinear affine subspace $M \subset V$ and let $P^B$ denote the
subposet induced by restricting to those elements less than or equal
to $e^B$ for an affine subspace $B \subset E$.  Thus $P(w) =
P^{\mov(w)}$ when $w$ is hyperbolic and $P(w) = P^{\fix(w)}$ when $w$
is elliptic.  Note that this notation is not ambiguous because $M$ and
$B$ are subsets of different spaces.

\begin{rem}[Auxillary results]\label{rem:aux}
  In the process of proving Theorem~\ref{thm:model} many auxillary
  results are established in \cite{BrMc-factor} that are useful here.  For
  example, the direction space of the min-set of an isometry is the
  orthogonal complement of the direction space of its move-set
  \cite[Lemma~$\LemOrthInv$]{BrMc-factor}.  In symbols, when $\mov(w) = U+\mu$
  in standard form, $\dir(\mov(w)) = U$ and $\dir(\ms(w)) = U^\perp$.
  Next, when $w$ is an isometry and $r$ is a reflection, the move-set
  of $w$ and the move-set of $rw$ are nested so that one is a
  codimension~$1$ subspace of the other
  \cite[Proposition~$\PropMoveRefl$]{BrMc-factor}.  As a consequence, the
  dimension of the min-set also changes by exactly one dimension in
  the opposite direction.  A third useful result identifies the
  min-set of a hyperbolic isometry as the unique affine subspace $B$
  in $E$ that is stabilized by $w$ of the correct dimension and where
  all points undergo the same motion
  \cite[Proposition~$\PropIdentMin$]{BrMc-factor}.
\end{rem}

One final result that we need from \cite{BrMc-factor} is a
characterization of exactly when hyperbolic posets are not lattices
and the explicit locations of the bowties that bear witness to this
fact.

\begin{thm}[Hyperbolic posets are not lattices]\label{thm:hyp-bowtie}
  Let $M$ be a nonlinear affine subspace of $V$.  The poset $P^M$
  contains a bowtie and is not a lattice iff $\dir(M)$ contains a
  proper non-trivial linear subspace $U$, which is true iff the
  dimension of $M$ is at least $2$.  More precisely, for every such
  subspace and for every choice of distinct elements $h^{M_1}$ and
  $h^{M_2}$ with $\dir(M_1) = \dir(M_2) = U$ and distinct elements
  $e^{B_1}$ and $e^{B_2}$ with $\dir(B_1) = \dir(B_2) = U^\perp$,
  these four elements form a bowtie.  Conversely, all bowties in $P^M$
  are of this form.
\end{thm}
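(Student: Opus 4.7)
The plan is to prove the three equivalences together. The first iff is purely linear-algebraic: $\dir(M)$ has a proper non-trivial subspace $U$ precisely when $\dim \dir(M) = \dim M \geq 2$. The second iff follows from the general principle in Definition~\ref{def:bowtie} once one observes that $P^M$ is bounded graded — the maximum is $h^M$, the minimum is $e^E$, and the grading comes from reflection length via Theorem~\ref{thm:lr}. So only the bowtie characterization has real content.

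For the forward direction, fix a proper non-trivial $U \subsetneq W := \dir(M)$, write $M = W + \mu$ in standard form with $\mu \in W^\perp \setminus \{0\}$, and decompose $W = U \oplus (W \cap U^\perp)$. Since $W \cap U^\perp$ has positive dimension, I can pick distinct $w_1', w_2' \in W \cap U^\perp$, set $\nu_i := \mu + w_i'$, and form $M_i := U + \nu_i$. These are distinct nonlinear affine subspaces of $M$ with $\dir(M_i) = U$; nonlinearity follows from $\nu_i \in U^\perp \setminus \{0\}$. I also pick two distinct parallel affine subspaces $B_1, B_2 \subset E$ with direction $U^\perp$. The relations $e^{B_i} < h^{M_j}$ hold since $(M_j)^\perp = (U + \spn(\nu_j))^\perp \subset U^\perp = \dir(B_i)$. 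The bowtie conditions then reduce to four checks: (i) $B_1 \cap B_2 = \emptyset$, which rules out elliptic upper bounds of $\{e^{B_1}, e^{B_2}\}$; (ii) no strictly smaller hyperbolic $h^{M''} < h^{M_j}$ is an upper bound, because $M'' \subsetneq M_j$ together with $\spn(M'') \supset U$ would force the offset of $M''$ into $U$, contradicting $\nu_j \notin U$; (iii) $M_1 \cap M_2 = \emptyset$, since $\nu_1 - \nu_2 = w_1' - w_2'$ is a nonzero vector in $W \cap U^\perp$ while $M_1 \cap M_2 \neq \emptyset$ would force it to lie in $U$; and (iv) $(M_1)^\perp + (M_2)^\perp = U^\perp$ from $\spn(M_1) \cap \spn(M_2) = U$, which in turn follows from the linear independence of $\nu_1, \nu_2$ (a consequence of $\mu \in W^\perp$ and $w_i' \in W$). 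Together (iv) and $\dir(B_i) = U^\perp$ prevent any elliptic lower bound of $\{h^{M_1}, h^{M_2}\}$ from strictly exceeding $e^{B_i}$.

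For the converse, I would case-analyze a bowtie $(a, b : c, d)$ in $P^M$ by the elliptic/hyperbolic types of the four elements. All-elliptic bowties are impossible because the elliptic subposet has a unique join $e^{A_c \cap A_d}$, so there cannot be two distinct minimal upper bounds. All-hyperbolic bowties are likewise ruled out by uniqueness of the affine hull of $M_c \cup M_d$. The mixed case with $a, b$ hyperbolic and one of $c, d$ elliptic and the other hyperbolic is where the nonlinearity of $M$ does essential work: maximality of the hyperbolic $d = h^{M_d}$ among hyperbolic lower bounds forces $M_d = M_a \cap M_b$; the affine hull of $M_a \cup M_b$ sits inside the nonlinear $M$ and is therefore itself nonlinear, so for any $p \in M_d$ one has $p \notin \dir(M_a) + \dir(M_b)$; a short calculation then yields $\spn(M_a) \cap \spn(M_b) = \spn(M_d)$, hence $(M_a)^\perp + (M_b)^\perp = (M_d)^\perp$; but this common space equals $\dir(A_c)$ for the maximal elliptic $c$, forcing $e^{A_c} \leq h^{M_d}$ and contradicting incomparability of the two maximal lower bounds. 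Only Case~B remains, and maximality on the lower side pins $\dir(B_1) = \dir(B_2) = (M_1)^\perp + (M_2)^\perp =: U^\perp$, while minimality on the upper side dually forces $\dir(M_1) = \dir(M_2) = U$. Properness and non-triviality of $U$ inside $\dir(M)$ then follow from the $M_j$ being distinct nonlinear proper affine subspaces of $M$.

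The main obstacle is this mixed-case elimination, which is the only step relying crucially on the nonlinearity of the ambient $M$ rather than purely on direction-space combinatorics; the linchpin is the identity $\spn(M_a) \cap \spn(M_b) = \spn(M_a \cap M_b)$, which holds exactly because the affine hull of $M_a \cup M_b$ is forced to be nonlinear by its containment in $M$.
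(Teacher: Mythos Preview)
The paper does not prove Theorem~\ref{thm:hyp-bowtie}; it is quoted from \cite{BrMc-factor}, so there is no in-paper argument to compare against. Your outline is broadly sound and the forward construction correctly produces a bowtie once $\dim M\ge 2$, but two places in the converse are thinner than they should be.

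First, your case split never addresses the possibility that exactly one of $a,b$ is elliptic (with $c,d$ both elliptic). This is easy---an elliptic $a$ forces the fixed sets of $c$ and $d$ to meet, whence $e^{B_c\cap B_d}$ is the genuine join in $P^M$ and no second minimal upper bound can exist---but it should be said.

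Second, and more substantively, in Case~B the claim that minimality ``dually forces $\dir(M_1)=\dir(M_2)=U$'' hides real work. With $U:=\spn(M_1)\cap\spn(M_2)$ you certainly have $\spn(M_j)\supset U$, but passing from this to $\dir(M_j)\supset U$ (which is what lets you shrink $M_j$ to $p+U$ and invoke minimality) requires $U\subset\dir(M)$, and that inclusion is not automatic. The missing chain is: if $M_1\cap M_2\neq\emptyset$ then your own mixed-case identity gives $\spn(M_1\cap M_2)=U$, so $(M_1\cap M_2)^\perp=U^\perp=\dir(B_i)$ and $e^{B_i}<h^{M_1\cap M_2}$, contradicting maximality of the $e^{B_i}$. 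Hence $M_1\cap M_2=\emptyset$; a short calculation using $M_1,M_2\subset M$ with $M$ nonlinear then shows $U\subset\dir(M)$, and now for any $p\in M_j\subset M$ one has $p\notin\dir(M)\supset U$, so $U\subset\dir(M_j)+\R p$ forces $U\subset\dir(M_j)$. Only then does minimality pin $\dir(M_j)=U$. Without this step the ``dual'' assertion is unjustified.

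A smaller point: your forward direction establishes existence of a bowtie but not the ``for every choice'' clause. The needed extra observation is that for \emph{any} distinct nonlinear $M_1,M_2\subset M$ with $\dir(M_i)=U$, the standard-form offsets $\nu_i=\mu+w_i$ (with $w_i\in W$ forced by $M_i\subset M$) are automatically linearly independent: a relation $\nu_1=c\nu_2$ would give $(1-c)\mu\in W\cap W^\perp=\{0\}$, hence $c=1$ and $M_1=M_2$. So your check~(iv) holds in general, not just for your particular $w_i'$.
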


%%%%%%%%%%%%%%%%%%%%%%%%%%%%%%%%%%
\section{Euclidean Coxeter groups}
%%%%%%%%%%%%%%%%%%%%%%%%%%%%%%%%%%

The irreducible Coxeter groups of interest in this article are those
that naturally act geometrically, i.e. properly discontinuously and
cocompactly by isometries, on a euclidean space with its generators
acting as reflections.  Their classification is well-known and they
are described by the Coxeter diagrams known as the \emph{extended
  Dynkin diagrams}.  There are four infinite families and five
sporadic examples of such diagrams and they are displayed in
Figure~\ref{fig:dynkin}.  In this restricted context, it is
traditional to replace edges labeled $4$ and $6$ with double and
triple edges, respectively.  The white vertex and the orientations on
the double and triple edges are explained below.  This section records
basic facts about these groups with the Coxeter group of type $\wt
G_2$ used to illustrate the concepts under discussion.  For additional
details see \cite{Humphreys90}.

\begin{figure}
  \begin{tabular}{cc}
    \begin{tikzpicture}[scale=.8]
    \begin{scope}[yshift=5.25cm,xshift=2cm]
      \node at (-3,0) {$\wt A_1$};
      \drawInfEdge{(0,0)}{(1,0)}
      \drawSpeDot{(1,0)}
      \drawESpDot{(0,0)}
    \end{scope}
    \begin{scope}[yshift=4cm,xshift=.5cm]
      \node at (-1.5,0) {$\wt A_n$};
      \drawEdge{(0,0)}{(2,0)}
      \drawDashEdge{(0,0)}{(2,.7)}
      \drawDashEdge{(4,0)}{(2,.7)}
      \drawEdge{(3,0)}{(4,0)}
      \drawDotEdge{(2,0)}{(3,0)}
      \foreach \x in {0,1,2,4} {\drawRegDot{(\x,0)}}
      \drawSpeDot{(2,.7)}
      \drawESpDot{(3,0)}
    \end{scope}
    \begin{scope}[yshift=3cm]
      \node at (-1,0) {$\wt C_n$};
      \drawDoubleEdge{(1,0)}{(0,0)}{(0,.1)}
      \drawEdge{(1,0)}{(2,0)}
      \drawDotEdge{(2,0)}{(3,0)}
      \drawEdge{(3,0)}{(4,0)}
      \drawDashDoubleEdge{(4,0)}{(5,0)}{(0,.1)}
      \foreach \x in {1,2,3,4} {\drawRegDot{(\x,0)}}
      \drawESpDot{(0,0)}
      \drawSpeDot{(5,0)}
    \end{scope}
    \begin{scope}[yshift=1.8cm]
      \node at (-1,0) {$\wt B_n$};
      \drawDoubleEdge{(0,0)}{(1,0)}{(0,.1)}
      \drawEdge{(1,0)}{(2,0)}
      \drawDotEdge{(2,0)}{(3,0)}
      \drawEdge{(3,0)}{(4,0)}
      \drawDashEdge{(4,0)}{(4.707,.707)}
      \drawEdge{(4,0)}{(4.707,-.707)}
      \drawRegDot{(4.707,-.707)}
      \foreach \x in {0,2,3,4} {\drawRegDot{(\x,0)}}
      \drawESpDot{(1,0)}
      \drawSpeDot{(4.707,.707)}
    \end{scope}
    \begin{scope}
      \node at (-1,0) {$\wt D_n$};
      \drawEdge{(.293,.707)}{(1,0)}
      \drawEdge{(.293,-.707)}{(1,0)}
      \drawEdge{(1,0)}{(2,0)}
      \drawDotEdge{(2,0)}{(3,0)}
      \drawEdge{(3,0)}{(4,0)}
      \drawDashEdge{(4,0)}{(4.707,.707)}
      \drawEdge{(4,0)}{(4.707,-.707)}
      \drawRegDot{(.293,.707)}
      \drawRegDot{(.293,-.707)}
      \drawRegDot{(4.707,-.707)}
      \foreach \x in {2,3,4} {\drawRegDot{(\x,0)}}
      \drawESpDot{(1,0)}
      \drawSpeDot{(4.707,.707)}
    \end{scope}
  \end{tikzpicture}
  &
  \begin{tikzpicture}[scale=.8]
  \begin{scope}[yshift=5.8cm,xshift=1cm]
    \node at (-2,0) {$\wt G_2$};
    \drawTripleEdge{(0,0)}{(1,0)}{(0,.1)}
    \drawDashEdge{(1,0)}{(2,0)}
    \drawRegDot{(0,0)}
    \drawESpDot{(1,0)}
    \drawSpeDot{(2,0)}
  \end{scope}
  \begin{scope}[yshift=5.1cm]
    \node at (-1,0) {$\wt F_4$};
    \drawDoubleEdge{(1,0)}{(2,0)}{(0,.1)}
    \drawEdge{(0,0)}{(1,0)}
    \drawEdge{(2,0)}{(3,0)}
    \drawDashEdge{(3,0)}{(4,0)}
    \foreach \x in {0,1,3} {\drawRegDot{(\x,0)}}
    \drawESpDot{(2,0)}
    \drawSpeDot{(4,0)}
  \end{scope}
  \begin{scope}[yshift=3.4cm]
    \node at (-1,0) {$\wt E_6$};
    \drawEdge{(0,0)}{(4,0)}
    \drawEdge{(2,0)}{(2,1)}
    \drawDashEdge{(2,1)}{(3,1)}
    \foreach \x in {0,1,3,4} {\drawRegDot{(\x,0)}}
    \drawRegDot{(2,1)}
    \drawESpDot{(2,0)}
    \drawSpeDot{(3,1)}
  \end{scope}
  \begin{scope}[yshift=1.7cm]
    \node at (-1,0) {$\wt E_7$};
    \drawEdge{(0,0)}{(5,0)}
    \drawEdge{(2,0)}{(2,1)}
    \drawDashEdge{(0,0)}{(0,1)}
    \foreach \x in {0,1,3,4,5} {\drawRegDot{(\x,0)}}
    \drawRegDot{(2,1)}
    \drawESpDot{(2,0)}
    \drawSpeDot{(0,1)}
  \end{scope}
  \begin{scope}
    \node at (-1,0) {$\wt E_8$};
    \drawEdge{(0,0)}{(6,0)}
    \drawDashEdge{(6,0)}{(6,1)}
    \drawEdge{(2,0)}{(2,1)}
    \foreach \x in {0,1,3,4,5,6} {\drawRegDot{(\x,0)}}
    \drawRegDot{(2,1)}
    \drawESpDot{(2,0)}
    \drawSpeDot{(6,1)}
  \end{scope}
  \end{tikzpicture}
  \end{tabular}
  \caption{Four infinite families and five sporadic examples.\label{fig:dynkin}}
\end{figure}
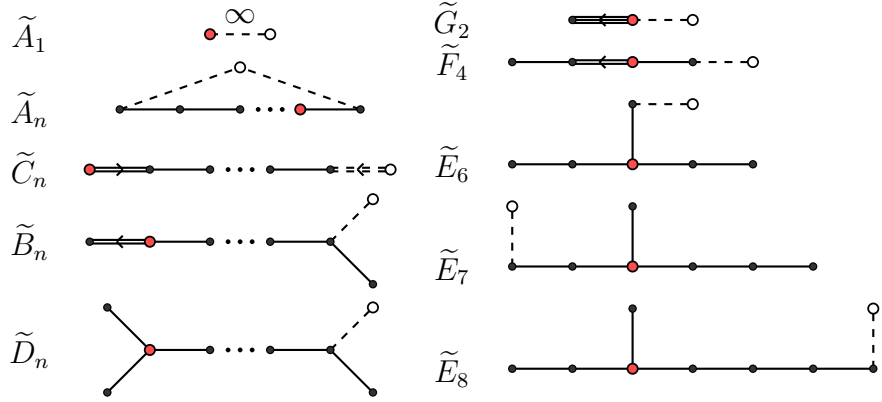

\begin{rem}[Diagrams and simplices]\label{rem:simplices}
  Each extended Dynkin diagram $\Gamma$ is essentially a recipe that
  can be used to reconstruct a euclidean simplex $\sigma$ with
  non-obtuse dihedral angles that are submultiples of $\pi$.  The
  vertices of $\Gamma$ index the outward pointing normal vectors
  $\alpha_s$ to the various facets of $\sigma$ and the label $m =
  m(s,t)$ indicates that the angle between $\alpha_s$ and $\alpha_t$
  is $\pi - \frac{\pi}{m}$ and thus the corresponding dihedral angle
  between their fixed facets is $\frac{\pi}{m}$.  This is sufficient
  information to reconstruct a unique euclidean simplex up to
  similarity.  The $\wt G_2$ diagram in Figure~\ref{fig:dynkin}, for
  example, leads to the construction of a 30-60-90 triangle.
\end{rem}

\begin{figure}
  \includegraphics[scale=1]{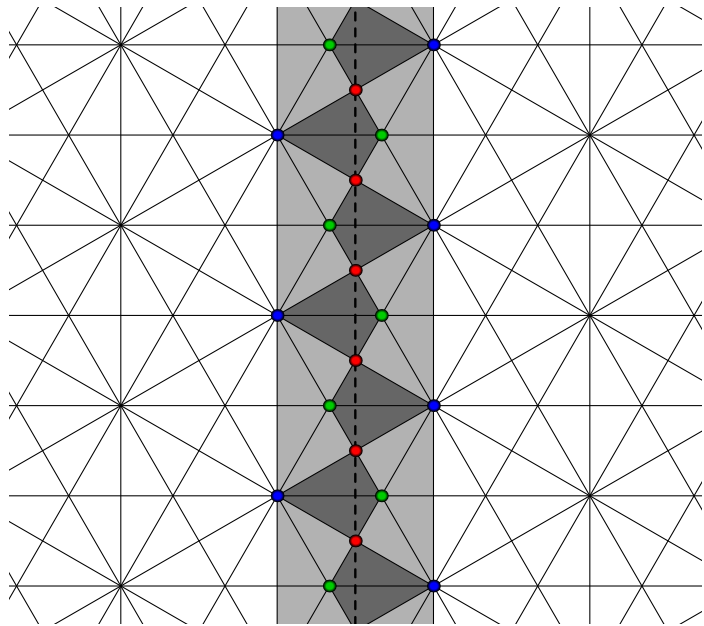}
  \caption{The $\wt G_2$ tiling of the plane with annotations
    corresponding to a particular Coxeter element $w$.  The dashed
    line is the glide axis of $w$, the heavily shaded triangles are
    those for which $w$ is a bipartite Coxeter element, and the
    lightly shaded vertical strip is the convex hull of the vertices
    of these triangles.\label{fig:g2-axis}}
\end{figure}

\begin{defn}[Coxeter complex]\label{def:cox-cplx}
  Let $W$ be an irreducible euclidean Coxeter group with extended
  Dynkin diagram $\Gamma$ and let $\sigma$ be a corresponding
  euclidean simplex.  If we embed $\sigma$ in a euclidean space $E$ so
  that $E$ is the affine hull of $\sigma$ and let $S$ be the
  isometries of $E$ that fix (the affine hull of) one of the facets of
  $\sigma$ pointwise then these reflections generate a group of
  isometries naturally isomorphic to the Coxeter group $W$ with
  standard generating set $S$.  More precisely, using the orbit of
  $\sigma$ under this group action, it is possible to give $E$ the
  structure of a metric simplicial complex called the \emph{Coxeter
    complex of $W$}.  See Figure~\ref{fig:g2-axis}.  The
  top-dimensional simplices are known as \emph{chambers} and the one
  used to define the \emph{simple system} $S$ is the \emph{fundamental
    chamber}.  Other simple systems are obtained from other chambers.
  The resulting action of $W$ on its Coxeter complex preserves the
  simplicial structure and is uniquely transitive on chambers.  Using
  this action, the facets of any chamber, and the reflections that fix
  them, can be identified with the vertices of $\Gamma$ in a canonical
  way.  Also note that any proper subset of $S$ is a collection of
  reflections whose hyperplanes intersect in a facet of $\sigma$ and
  thus their product is elliptic.
\end{defn}

There is an alternative encoding of the geometry of an irreducible
euclidean Coxeter complex into a finite collection of vectors called
roots.

\begin{defn}[Roots]
  Let $W = \cox(\wt X_n)$ be an irreducible euclidean Coxeter group.
  If $r$ is a reflection in $W$ and $\alpha$ is any vector in $V$
  whose span is the line $L=\mov(r)$, then $\alpha$ is called a
  \emph{root of $r$}.  There is a finite collection of vectors $\Phi =
  \Phi_{X_n}$ called a \emph{root system} that contains a pair of
  roots $\pm \alpha$ for each family of parallel hyperplanes defining
  reflections in $W$ and the length of $\alpha$ encodes the minimal
  distance between these equally spaced parallel hyperplanes.  The
  root system that encode the hyperplanes of the $\wt G_2$ Coxeter
  group (shown in Figure~\ref{fig:g2-axis}) is illustrated in
  Figure~\ref{fig:phi-g2}.  Note that longer roots correspond to
  hyperplanes with shorter distances between them.  Root length is
  encoded in the extended Dynkin diagram as follows.  When two
  vertices are connected by a single edge, the roots they represent
  are the same length, when they are connected by a double edge, one
  root is $\sqrt{2}$ times the length of the other and when they are
  connected by a triple edge, one root is $\sqrt{3}$ times the length
  of the other.  The longer root is indicated by superimposing an
  inequality sign.
\end{defn}

\begin{figure}
  \includegraphics[scale=.8]{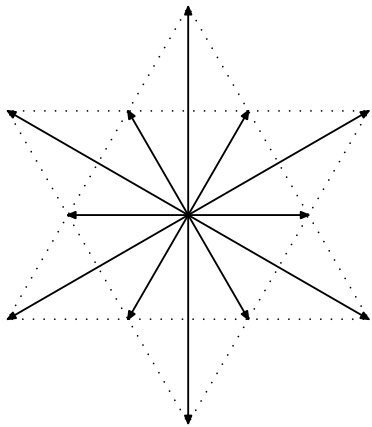}
  \caption{The root system $\Phi_{G_2}$.\label{fig:phi-g2}}
\end{figure}

The Coxeter complex of an irreducible euclidean Coxeter group can be
reconstructed from its root system because it always contains a point
$x$ with the property that every hyperplane of a reflection in $W$ is
parallel to a hyperplane of a reflection in $W$ fixing $x$.  In the
$\wt G_2$ example, $x$ can be any corner of a triangle with a $30$
degree angle.  After identifying $V$ and $E$ using this special point
as our origin, the reconstruction proceeds as follows.

\begin{defn}[Reflections and hyperplanes]\label{def:root-cox}
  Let $\Phi = \Phi_{X_n}$ be a root system of an irreducible euclidean
  Coxeter group $W = \cox(\wt X_n)$.  For each $\alpha \in \Phi$ and
  $i\in \Z$ let $H_{\alpha,i}$ denote the (affine) \emph{hyperplane}
  in $V$ of solutions to the equation $\langle x,\alpha \rangle = i$
  where the brackets denote the standard inner product on $V$.  The
  intersections of the hyperplanes give the simplicial structure.  The
  unique nontrivial isometry of $V$ that fixes $H_{\alpha,i}$
  pointwise is a reflection that we call $r_{\alpha,i}$.  The
  collection $R = \{r_{\alpha,i} \mid \alpha \in \Phi, i \in \Z\}$
  generates a euclidean Coxeter group $W$ and $R$ is its set of
  reflections in the sense of Definition~\ref{def:coxeter}.
\end{defn}

The reflections through the origin generate a finite Coxeter group
$W_0$ related to $W$ in two distinct ways.

\begin{defn}[Dynkin diagrams]\label{def:finite}
  The hyperplanes $H_\alpha = H_{\alpha,0}$ are precisely the ones
  that contain the origin and the reflections $r_\alpha =
  r_{\alpha,0}$ generate a finite Coxeter group $W_0$ that contains
  all elements of $W$ fixing the origin.  This embeds $W_0$ as a
  subgroup of $W$.  There is also a well-defined group homomorphism
  $p\colon W \onto W_0$ defined by sending each generating reflection
  $r_{\alpha,i}$ in $W$ to $r_\alpha$ in $W_0$.  Choosing a
  fundamental chamber containing the origin shows that $W_0$ is a
  Coxeter group generated by all but one of the reflections in $S$.
  The vertex of the extended Dynkin diagram $\Gamma$ corresponding to
  the missing reflection is shaded white.  When $\Gamma$ is an
  extended Dynkin diagram of type $\wt X_n$, the subgraph without the
  white vertex is called a \emph{Dynkin diagram of type $X_n$}.  In
  particular, the finite Coxeter group $W_0 = \cox(X_n)$.
\end{defn}

In the notation of Definition~\ref{def:root-cox}, the white dot
represents a reflection of the form $r_{\alpha,1}$ where $\alpha \in
\Phi$ is a canonical vector of ``highest weight''.  The translations
in $W$ are described by coroots and the coroot lattice.

\begin{defn}[Coroots]
  For each $\alpha \in \Phi$ consider the product $r_{\alpha,1}
  r_\alpha$, or equivalently $r_{\alpha,i+1} r_{\alpha,i}$.
  Reflecting through parallel hyperplanes produces a translation in
  the $\alpha$ direction and the exact translation is
  $t_{\alpha^\vee}$ where $\alpha^\vee = c \alpha$ is a \emph{coroot}
  with $c= \frac{2}{\langle \alpha,\alpha \rangle}$.  The collection
  of all coroots is denoted $\Phi^\vee$ and the integral linear
  combinations of vectors in $\Phi^\vee$ is a lattice $\Z(\Phi^\vee)
  \cong \Z^n$ called the \emph{coroot lattice}.  Because $t_\mu t_\nu
  = t_{\mu + \nu} = t_\nu t_\mu$, there is a translation of the form
  $t_\lambda$ in $W$ for each $\lambda \in \Z(\Phi^\vee)$ and the set
  $T = \{ t_\lambda \mid \lambda \in \Z(\Phi^\vee)\}$ forms an abelian
  subgroup of $W$.  In fact, these are the only translations that are
  contained in $W$ (i.e. $T = T_E \cap W$), the subgroup $T$ is the
  kernel of the map $p\colon W \onto W_0$ and $W$ is a semidirect
  product of $W_0$ and $T$.
\end{defn}

%%%%%%%%%%%%%%%%%%%%%%%%%%
\section{Coxeter elements}
%%%%%%%%%%%%%%%%%%%%%%%%%%

This section coarsely classifies Coxeter elements in irreducible
euclidean Coxeter groups.  Recall that a \emph{Coxeter element} in a
Coxeter group $W$ with standard generating set $S$ is a product of the
reflections of $S$ in some linear order.  We begin by determining the
basic geometric invariants of a Coxeter element in an irreducible
euclidean Coxeter group when viewed as a euclidean isometry.  The key
observation is that a collection of vectors normal to the facets of a
euclidean simplex are almost linearly independent in the sense that
every proper subset is linearly independent but the full set is not.

\begin{prop}[Simple systems and elliptic isometries] 
  \label{prop:sim-sys-iso} 
  If $S$ is a simple system of an irreducible euclidean Coxeter group
  $W$ corresponding to a chamber $\sigma$, then the product of any
  proper subset of the reflections in $S$ is an elliptic element whose
  fix-set is the affine hull of the face of $\sigma$ determined by the
  intersection of the corresponding hyperplanes.  Moreover, this is a
  minimum length reflection factorization of the resulting elliptic
  isometry.
\end{prop}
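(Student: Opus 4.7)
The plan is to identify $w = \prod_{s \in S'} s$ (for $S' \subsetneq S$) as an elliptic isometry whose fix-set is exactly the affine hull $F$ of the face of $\sigma$ cut out by the hyperplanes $H_s$ for $s \in S'$, and then read off minimum length from Scherk's theorem. First I would observe that each $s \in S'$ fixes $H_s \supset F$ pointwise, so $w$ fixes $F$ pointwise; in particular $w$ has a fixed point, hence is elliptic, and $\fix(w) \supset F$. Since $F$ has codimension $|S'|$, this already provides the geometrically correct lower bound on the fix-set.

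The crux is upgrading this containment to the equality $\fix(w) = F$. Two inputs are needed. First, by Remark~\ref{rem:simplices}, any proper subset of the simple normals $\{\alpha_s : s \in S'\}$ is linearly independent, so $U := \spn\{\alpha_s : s \in S'\}$ has dimension exactly $|S'|$ and $U^\perp = \dir(F)$. Second, the parabolic subgroup $W_{S'}$ generated by $S'$ is a \emph{finite} Coxeter group; this is the characterizing property of extended Dynkin diagrams, since every proper induced subdiagram of an extended Dynkin diagram is an ordinary Dynkin diagram of finite type. Choosing an origin $p \in F$ identifies $E$ with $V$, and the orthogonal decomposition $V = U \oplus U^\perp$ is then $w$-invariant: each $s \in S'$ fixes $U^\perp \subset \alpha_s^\perp$ pointwise, so $w$ does too, while $w|_U$ is the Coxeter element of $W_{S'}$ in its standard reflection representation. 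By classical finite-Coxeter theory the Coxeter element of a finite Coxeter group has no nonzero fixed vector in its reflection representation (its eigenvalues are $e^{2\pi i m_j/h}$ for the positive exponents $m_j < h$), so $w - I$ is invertible on $U$. This forces $\fix(w) = F$ and $\mov(w) = (w - I)(V) = U$, a linear subspace of dimension $|S'|$.

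With $w$ elliptic and $\dim \mov(w) = |S'|$, Theorem~\ref{thm:lr} gives $\lre(w) = |S'|$. Since the given product already expresses $w$ as a product of exactly $|S'|$ reflections, it is automatically a minimum length reflection factorization. The main obstacle is pinning down the equality $\fix(w) = F$: everything else---the easy inclusion $\fix(w) \supset F$, the dimension count for $U$, and the final length computation---follows routinely from the geometric setup and Scherk's theorem once the fixed-vector-free property of Coxeter elements in the finite parabolic $W_{S'}$ is invoked.
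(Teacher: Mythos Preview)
Your argument is correct, but it takes a different route from the paper's proof. The paper simply invokes \cite[Lemma~$\LemMinEll$]{BrMc-factor}, a general fact about euclidean isometries: whenever reflections have linearly independent roots, their product is elliptic with fix-set equal to the intersection of their hyperplanes, and the factorization has minimum length. Since proper subsets of simple roots are linearly independent (the only observation the paper needs), the conclusion is immediate.

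You instead exploit the Coxeter-theoretic structure: the proper parabolic $W_{S'}$ is finite because proper subdiagrams of extended Dynkin diagrams are of finite type, and the product of the chosen reflections is a Coxeter element of $W_{S'}$ acting on $U = \spn\{\alpha_s : s \in S'\}$; the classical eigenvalue description then forces $w - I$ to be invertible on $U$, hence $\fix(w) = F$ and $\dim\mov(w) = |S'|$. This is a genuinely different mechanism. The paper's route is more elementary and more general---it is pure linear algebra about reflections and needs nothing about the parabolic being finite or about exponents---while your route yields the extra structural information that $w$ is a Coxeter element of a finite parabolic, a fact the paper only extracts later (in Lemma~\ref{lem:vert-refl}) for a more specialized purpose. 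One small point worth making explicit in your write-up: the subdiagram on $S'$ may be disconnected, so $W_{S'}$ is a product of irreducible finite Coxeter groups and $w$ is a product of their Coxeter elements; the fixed-vector-free property still holds component-by-component, but it is cleaner to say so.
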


\begin{proof}
  The hyperplanes corresponding to any proper subset of $S$ have a
  face of $\sigma$ in common and the product of the corresponding
  reflections fixes its affine hull.  This shows that the product is
  elliptic.  The fact that this is the full fix-set and that this
  product of reflections has minimum length follows immediately from
  \cite[Lemma~$\LemMinEll$]{BrMc-factor} and the observation that the
  roots of these reflections are linearly independent.
\end{proof}

\begin{prop}[Coxeter elements are hyperbolic isometries]
  \label{prop:cox-elt-iso}
  A Coxeter element for an irreducible euclidean Coxeter group is a
  hyperbolic isometry of $E$, its move-set is a nonlinear affine
  hyperplane in $V$ and its min-set is a line in $E$.  Moreover, any
  factorization of this element as a product of the elements in a
  simple system is a minimum length reflection factorization.
\end{prop}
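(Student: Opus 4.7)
The plan is to peel off one reflection from the given factorization, reduce to the elliptic setting of Proposition~\ref{prop:sim-sys-iso}, and then read off the reflection length using Scherk's theorem (Theorem~\ref{thm:lr}). Write $S=\{s_0,s_1,\ldots,s_n\}$ for the simple system corresponding to a chamber $\sigma$ and $w = s_0 s_1 \cdots s_n$ for the chosen ordering.

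\emph{Dimensions of the invariants.} Set $v := s_1 s_2 \cdots s_n$. Proposition~\ref{prop:sim-sys-iso} applied to this proper subset of $S$ shows that $v$ is elliptic with $\fix(v)$ equal to the single vertex $p$ of $\sigma$ opposite the facet of $s_0$, and that $\lre(v) = n$. Since $\dim \fix(v) = 0$, the orthogonality of directions $\dir(\ms(v)) = \dir(\mov(v))^\perp$ recalled in Remark~\ref{rem:aux} forces $\dim \mov(v) = n$, and ellipticity then gives $\mov(v) = V$. Writing $w = s_0\, v$ and invoking the codimension-one nesting from the same remark, the move-sets $\mov(w)$ and $\mov(v) = V$ differ by codimension $1$; since $V$ is already full-dimensional, $\mov(w)$ must sit inside $V$ as a codimension-$1$ affine subspace, so $\dim \mov(w) = n-1$. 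The orthogonality formula then returns $\dim \ms(w) = 1$, so $\ms(w)$ is a line in $E$ regardless of whether $w$ turns out to be elliptic or hyperbolic.

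\emph{Hyperbolicity -- the main step.} It remains to show that $\mov(w)$ is nonlinear, equivalently that $w$ has no fixed point. I would argue by contradiction. If $w$ were elliptic, then $\fix(w) = \ms(w)$ would be a line $L \subset E$ and $w$ would lie in the pointwise stabilizer of $L$ in $\isom(E)$. That stabilizer is isomorphic to $O(L^\perp) \cong O(n-1)$, which is compact, and its intersection with the discrete subgroup $W$ is therefore finite; so $w$ would have finite order. This contradicts the classical fact that a Coxeter element of an irreducible euclidean Coxeter group has infinite order (see, e.g., \cite{Humphreys90}). Consequently $\mov(w)$ is a nonlinear affine hyperplane in $V$ and $w$ is hyperbolic.

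\emph{Minimum-length factorization.} With $w$ hyperbolic and $\dim \mov(w) = n-1$, Theorem~\ref{thm:lr} gives $\lre(w) = (n-1) + 2 = n+1$. The given product $s_0 s_1 \cdots s_n$ already has $n+1$ reflection factors, so it realizes the minimum, and the same holds for any reordering of the elements of $S$. The genuinely hard ingredient is the infinite-order fact invoked in the previous paragraph: a self-contained substitute would be to exhibit directly the ``glide axis'' of $w$ -- the dashed line in Figure~\ref{fig:g2-axis} -- along which $w$ translates nontrivially, but any such construction ultimately rests on the single positive linear dependence among the $n+1$ simple roots, the one feature that distinguishes the euclidean case from the spherical one.
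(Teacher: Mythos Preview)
Your argument is correct and shares the paper's skeleton: strip off $s_0$, invoke Proposition~\ref{prop:sim-sys-iso} on the remaining product to get an elliptic element fixing a single vertex with move-set all of $V$, then analyse $w = s_0 v$. The divergence is in how hyperbolicity is established. The paper observes that the hyperplane of $s_0$ misses the vertex $\fix(v)$ (that vertex being opposite the $s_0$-facet of $\sigma$) and then appeals directly to \cite[Proposition~\PropEllRefl]{BrMc-factor}, which says that left-multiplying an elliptic isometry by a reflection whose hyperplane avoids the fixed set yields a hyperbolic isometry with the stated invariants; all of the conclusions are read off from that one citation. You instead extract the dimensions of $\mov(w)$ and $\ms(w)$ from the codimension-one nesting statement already quoted in Remark~\ref{rem:aux}, and then rule out ellipticity by the compactness/discreteness argument combined with the infinite order of $w$. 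Each route imports one external fact. The paper's is purely geometric and stays inside the companion paper's move-set/min-set toolkit; yours swaps that for a classical Coxeter-theoretic input. Your path has the minor advantage that everything you use from \cite{BrMc-factor} is already recorded in Remark~\ref{rem:aux}, whereas Proposition~\PropEllRefl\ is not; on the other hand, the paper's route is cleaner in that it avoids any appearance of circularity---the most direct proof that a euclidean Coxeter element has infinite order \emph{is} to show it is hyperbolic, so if you pursue your version you should make sure the infinite-order fact is grounded independently (e.g.\ via the support/parabolic characterisation of finite-order elements in a Coxeter group, or via the semidirect product $W = T \rtimes W_0$), which it can be.
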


\begin{proof}
  Let $w \in W = \cox(\wt X_n)$ be the Coxeter element under
  discussion and let $w=r_0 r_1 \cdots r_n$ be a factorization of $w$
  as the product of the $n+1$ reflections in a simple system $S =
  \{r_0, \ldots, r_n\}$ corresponding to a chamber $\sigma$ in the
  Coxeter complex of $W$.  By Proposition~\ref{prop:sim-sys-iso}, the
  product $w_0 = r_1 \cdots r_n$ is an elliptic isometry that only
  fixes a single vertex of $\sigma$ and consequently its move-set is
  all of $V$ (Remark~\ref{rem:aux}).  The hyperplane of the reflection
  $r_0$ is the determined by the facet through the other $n$ vertices
  of $\sigma$ and in particular, it does not contain the point
  $\fix(w_0)$.  By \cite[Proposition~$\PropEllRefl$]{BrMc-factor}, the
  product $w = r_0 w_0$ is a hyperbolic element that has the listed
  properties.
\end{proof}

The line in $E$ that is the min-set of a Coxeter element $w$ in an
irreducible euclidean Coxeter group is called its \emph{axis}.  The
next step is to classify those Coxeter elements that are geometrically
distinct.  The first thing to note is that when standard generators
commute, distinct orderings can produce the exact same element.  In
fact, the only critical information is the ordering of pairs of
generators joined by an edge in the Coxeter diagram $\Gamma$.  If we
orient each edge of $\Gamma$ according to the order in which the
reflections corresponding to its endpoints occur in the fixed total
order, the result is an acyclic orientation of $\Gamma$ and it is easy
to prove that two linear orderings of $S$ that induce the same acyclic
orientation of $\Gamma$ produce the same element $w\in W$.  On the
hexagonal diagram for the $\wt A_5$ Coxeter group, for example, there
are $6! = 720$ different products of its $6$ standard generators but
at most $2^6-2 = 62$ distinct Coxeter elements produced since this is
the number of acyclic orientations.  There is also a coarser notion of
geometric equivalence.

\begin{defn}[Geometric equivalence]
  Call an automorphism $\psi:W \to W$ \emph{geometric} if $\psi$ sends
  reflections to reflections and simple systems to simple systems.
  When $W$ is an irreducible euclidean Coxeter group this is
  equivalent to being induced by a metric-preserving simplicial
  automorphism of the Coxeter complex.  The geometric automorphisms
  form a subgroup of the full automorphism group that contains the
  inner automorphisms and the automorphism induced by symmetries of
  the Coxeter diagram.  In fact, every geometric automorphism is a
  composition of an inner automorphism and a diagram automorphism.
  Call two elements $w$ and $w'$ \emph{geometrically equivalent} when
  there is a geometric automorphism $\psi$ sending $w$ to $w'$, the
  point being that geometrically equivalent elements have similar
  geometric properties.
\end{defn}

Geometric equivalence is sufficient for our purposes since
geometrically equivalent Coxeter elements produce intervals that are
identical after a systematic relabeling of the edges.  In particular,
geometrically equivalent Coxeter elements produce isomorphic dual
Artin groups.  To help identify Coxeter elements that are
geometrically equivalent, we use the following lemma with a
complicated statement and an easy proof.

\begin{lem}[Sources and sinks]\label{lem:sink-source}
  Let $W$ be a Coxeter group with fundamental chamber $\sigma$, let
  $w$ be the Coxeter element of $W$ produced by a fixed acyclic
  orientation of its diagram $\Gamma$ and let $r$ be a reflection with
  hyperplane $H$ associated with a vertex $v \in \Gamma$ that is
  either a source or a sink in this orientation.  Then the element $w$
  is also a product of the reflections associated with the chamber
  $r(\sigma)$ on the other side of $H$ with respect to the orientation
  of $\Gamma$ that agrees with the previous one except that the
  orientation is reversed for every edge incident with $v$.  In
  particular, two acyclic orientations of $\Gamma$ that differ by a
  single sink-source flip produce Coxeter elements that are
  geometrically equivalent.
\end{lem}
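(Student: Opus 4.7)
The plan is to reduce the lemma to a direct algebraic manipulation of a reflection factorization inside $W$. Suppose first that $v$ is a source in the given acyclic orientation of $\Gamma$, with associated reflection $r$. Because $v$ is a source, the partial order on vertices determined by the orientation admits a linear extension in which $v$ comes first, so $w$ can be written as $w = r \cdot w_0$, where $w_0$ is a product of the remaining reflections of $\sigma$ in an order compatible with the orientation restricted to the other vertices.

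The crux is to rewrite $w = (r w_0 r)\cdot r = w_0' \cdot r$ and verify that $w_0'$ is a product of reflections associated with the chamber $r(\sigma)$, taken in the same relative order as the factors in $w_0$. Indeed, the facets of $r(\sigma)$ consist of the hyperplane of $r$ itself, shared with $\sigma$, together with the images $r(H_u)$ of the remaining facets $H_u$ of $\sigma$, and the reflection across $r(H_u)$ is $r\,r_u\,r^{-1}$. When $u$ is not adjacent to $v$ in $\Gamma$, the reflections $r$ and $r_u$ commute, so $r\,r_u\,r^{-1} = r_u$ and also $r(H_u)=H_u$; when $u$ is adjacent to $v$, the conjugate $r\,r_u\,r^{-1}$ is a genuinely different reflection, which is precisely the one indexed by $u$ in $r(\sigma)$. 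Distributing the conjugation by $r$ through the factors of $w_0$ therefore replaces each $r_u$ by its counterpart in $r(\sigma)$ while preserving the ordering of the letters.

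To read off the new orientation, observe that the relative order of any two non-$v$ reflections in $w_0'$ matches their order in $w_0$, so the orientation of $\Gamma$ is unchanged on edges not incident with $v$. On edges incident with $v$, however, $r$ has moved from the front of the product to the back, so $v$ is now a sink and every edge at $v$ has been reversed. This proves the first assertion. The case in which $v$ is a sink is symmetric: write $w = w_0 \cdot r = r \cdot (r w_0 r)$ and repeat the argument verbatim.

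For the ``in particular'' clause, conjugation by $r$ is an inner automorphism of $W$ and therefore a geometric automorphism in the sense defined earlier. By the same commutation/conjugation calculation used above, it carries each reflection associated with $r(\sigma)$ to the corresponding reflection associated with $\sigma$. Consequently it sends $w$, viewed as a Coxeter element for the pair (chamber $r(\sigma)$, flipped orientation), to the Coxeter element $w'$ produced by the pair (chamber $\sigma$, flipped orientation), so $w$ and $w'$ are geometrically equivalent. The only slightly delicate point in the whole argument is the identification of $r\,r_u\,r^{-1}$ with the reflection of $r(\sigma)$ indexed by $u$ when $u$ is adjacent to $v$; this is immediate because reflections are determined by their hyperplanes and $r$ sends $H_u$ to $r(H_u)$, but keeping the labelling consistent as one passes between $\sigma$ and $r(\sigma)$ is the main bookkeeping to check.
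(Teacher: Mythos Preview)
Your argument is correct and follows essentially the same approach as the paper's proof: write $w = r\, r_1\cdots r_n$ when $v$ is a source, use $r^2=1$ to rewrite this as $(r r_1 r)\cdots(r r_n r)\, r$, and identify the conjugated reflections as those bounding the adjacent chamber $r(\sigma)$. Your treatment is in fact more detailed than the paper's, which compresses everything into the single identity $w = r_1^r\cdots r_n^r\, r$ and leaves the orientation check and the ``in particular'' clause to the reader; your explicit verification that $r w r^{-1}$ is the Coxeter element for $(\sigma,\text{flipped orientation})$ is a welcome addition.
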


\begin{proof}
  Pick a linear ordering of the vertices consistent with the
  orientation of $\Gamma$ so that $w = r_0 r_1 r_2 \cdots r_n$ where
  the $r_i$ are the reflections through the facets of the chamber
  $\sigma$ and $r=r_0$ or $r=r_n$.  If $r=r_0$ then the assertion is a
  consequence of the elementary observation that $w = r r_1 r_2 \cdots
  r_n = r_1^r r_2^r \cdots r_n^r r$ where $a^b$ is shorthand for
  $bab^{-1}$.  The reflections in the second factorization bound the
  simplex $r(\sigma)$ which shares a facet with $\sigma$ and is the
  reflection of $\sigma$ across $H$ and the orientation of $\Gamma$
  induced by this factorization satisfies the given description.  The
  case $r=r_n$ is similar.
\end{proof}

It quickly follows that most irreducible euclidean Coxeter groups have
only one Coxeter element up to geometric equivalence.

\begin{prop}[Geometrically equivalent]\label{prop:geo-equiv}
  If $W$ is a Coxeter group whose diagram is a tree, then all of its
  Coxeter elements are geometrically equivalent. This holds, in
  particular, for every irreducible euclidean Coxeter group that is
  not type $A$.
\end{prop}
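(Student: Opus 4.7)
The plan is to reduce the proposition to the combinatorial claim that \emph{any two acyclic orientations of a tree $T$ are connected by a sequence of sink-source flips at vertices}. Granting this, Lemma~\ref{lem:sink-source} shows that consecutive orientations in such a sequence produce geometrically equivalent Coxeter elements, and transitivity finishes the first assertion. The second assertion follows by inspection of Figure~\ref{fig:dynkin}: every extended Dynkin diagram except those of type $\wt A_n$ with $n\geq 2$ is a tree.

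To prove the combinatorial claim I would induct on $\vert V(T)\vert$, the base case being trivial. For the inductive step let $\ell$ be a leaf of $T$ with neighbor $u$, and set $T' = T-\ell$. Given two acyclic orientations $O_1$ and $O_2$ of $T$, note that since $\ell$ has degree one it is automatically a source or a sink in any orientation, so a single flip at $\ell$ reverses the edge $\ell u$. Thus after at most one such flip we may assume $O_1$ and $O_2$ agree on the orientation of $\ell u$, say $\ell \to u$. By the inductive hypothesis the restrictions $O_1\vert_{T'}$ and $O_2\vert_{T'}$ are connected by a sequence of sink-source flips in $T'$, and it remains to lift this sequence to $T$.

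A flip in $T'$ at any vertex $v \neq u$ is already a legitimate flip in $T$. A flip in $T'$ at $u$ is more delicate, since $u$ carries an extra edge $\ell u$ in $T$ and may fail to be a source or sink of the current $T$-orientation. If it is one, perform the flip directly. If it is not, prepending a flip at $\ell$ reverses $\ell u$ and brings $u$ into the required source or sink status in $T$; after performing the $u$-flip, a final flip at $\ell$ if necessary restores $\ell u$ to its original direction. The net effect on the $T'$-edges at $u$ is exactly that of the intended $T'$-flip, so the lifted sequence transforms $O_1$ into $O_2$ in $T$.

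The main thing to verify is that at each intermediate stage the vertex proposed for flipping really is a source or sink of the current $T$-orientation. The only nonobvious case concerns flipping $u$, and here the preceding flip at $\ell$ is precisely designed to guarantee the required status; a short case split on whether $u$ is being flipped as a source or as a sink in $T'$ and on the current orientation of $\ell u$ exhausts the possibilities and makes the lifting rigorous.
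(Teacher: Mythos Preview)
Your proposal is correct and follows essentially the same approach as the paper: an induction on the number of vertices that removes a leaf, applies the inductive hypothesis to the pruned tree, and lifts the resulting sequence of flips by inserting flips at the leaf whenever its neighbor needs to be made a sink or source. You have simply filled in the details that the paper's sketch leaves to the reader.
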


\begin{proof}
  There is an easy induction argument using
  Lemma~\ref{lem:sink-source} which proves that any two acyclic
  orientations of a tree are geometrically equivalent.  The rough idea
  is to remove a valence~$1$ vertex and the unique edge connected to
  it, apply the inductive hypothesis to this pruned tree and then use
  the resulting sequence of flips as a template for the original
  situation inserting flips of the removed valence~$1$ vertex as
  necessary in order to make sure the vertex at the other end of its
  unique edge is a sink/source when required.
\end{proof}

When the Coxeter diagram is a tree, there are exactly two orientations
under which every vertex is a source or a sink.  The two Coxeter
elements that result are inverses of each other and either one is
called a \emph{bipartite Coxeter element}.  Because of the way in
which Lemma~\ref{lem:sink-source} is proved, it is an immediate
consequence of Proposition~\ref{prop:geo-equiv} that every Coxeter
element of an irreducible euclidean Coxeter group that is not of type
$A$ can be viewed as a bipartite Coxeter element so long as the
fundamental chamber is chosen carefully.  The chambers which produce
$w$ as a bipartite Coxeter element have an elegant geometric
characterization that is described in the next section.

\begin{cor}[Bipartite Coxeter elements]\label{cor:bip-cox}
  If $w$ is a Coxeter element of an irreducible euclidean Coxeter
  group $W$ that is not of type $A$, then for each acyclic orientation
  of $\Gamma$ there exists a chamber $\sigma$ in its Coxeter complex
  which has $w$ as the Coxeter element determined by this orientation.
  In particular, there is a chamber which produces $w$ as its
  bipartite Coxeter element.
\end{cor}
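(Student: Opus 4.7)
The plan is to bootstrap directly from Proposition~\ref{prop:geo-equiv} and Lemma~\ref{lem:sink-source}. Since $W$ is not of type $A$, its extended Dynkin diagram $\Gamma$ is a tree, so the proof of Proposition~\ref{prop:geo-equiv} supplies exactly the combinatorial input needed: any two acyclic orientations of $\Gamma$ can be connected by a finite sequence of sink-source flips.

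First, I would fix some initial chamber $\sigma_0$ and a linear ordering of its facet-reflections whose product is the given Coxeter element $w$; this is possible by the definition of Coxeter element together with Proposition~\ref{prop:geo-equiv}, which ensures that every acyclic orientation of $\Gamma$ produces a Coxeter element geometrically equivalent to $w$, so some chamber realizes $w$ itself. Let $O_0$ be the acyclic orientation of $\Gamma$ encoded by the ordering used on $\sigma_0$, and let $O$ be the target acyclic orientation. By the inductive argument behind Proposition~\ref{prop:geo-equiv}, there is a sequence of acyclic orientations $O_0, O_1, \ldots, O_k = O$ in which each consecutive pair differs by a single sink-source flip.

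Next, I would lift this combinatorial sequence to a sequence of chambers $\sigma_0, \sigma_1, \ldots, \sigma_k$ by iteratively applying Lemma~\ref{lem:sink-source}. At step $i$, the flip is at a vertex that is a sink or source of $O_i$; this vertex corresponds to a specific reflection $r_i$ on the boundary of $\sigma_i$, and Lemma~\ref{lem:sink-source} allows us to cross $H_{r_i}$ and pass to the adjacent chamber $\sigma_{i+1} = r_i(\sigma_i)$. The crucial content of that lemma is that the same element $w$ is still produced as a Coxeter element of $\sigma_{i+1}$, now with respect to the flipped orientation $O_{i+1}$. After $k$ iterations the chamber $\sigma_k$ realizes $w$ with respect to the prescribed orientation $O$, which is the first assertion. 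The bipartite statement is then immediate: specialize $O$ to one of the two orientations of the tree $\Gamma$ under which every vertex is a source or a sink.

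The only delicate point is checking that the flip sequence can be chosen so that at each step the flipped vertex really is a sink or source of the \emph{current} orientation $O_i$, since Lemma~\ref{lem:sink-source} requires this in order to perform the chamber move. But this compatibility is precisely what the inductive construction in Proposition~\ref{prop:geo-equiv} delivers, so the argument closes without further work.
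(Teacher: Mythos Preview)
Your argument is correct and is exactly the approach the paper intends: the corollary has no separate proof in the paper, which simply notes that it follows from Proposition~\ref{prop:geo-equiv} together with the way Lemma~\ref{lem:sink-source} is proved (namely, that each sink-source flip preserves the element $w$ while replacing the chamber by an adjacent one). One small simplification: the existence of the initial chamber $\sigma_0$ with some orientation $O_0$ producing $w$ is immediate from the definition of a Coxeter element, so you need not invoke Proposition~\ref{prop:geo-equiv} for that starting point.
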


The analysis of Coxeter elements in the group $W=\cox(\wt A_n)$ up to
geometric equivalent is slightly more delicate.  

\begin{defn}[Bigon Coxeter elements]\label{def:bigon}
  When $n$ is at least $2$, the diagram $\wt A_n$ is a cycle and the
  two edges adjacent to a sink or a source point in opposite
  directions around the cycle. (The case $n=1$ is covered by
  Proposition~\ref{prop:geo-equiv}.)  In particular, flipping sinks
  and sources does not change the number of edges pointing in each
  direction.  Moreover, using flips and diagram symmetries it is clear
  that any Coxeter element produced by an acyclic orientation is
  geometrically equivalent to one produced by an acyclic orientation
  in which there is a unique sink, a unique source, $p$ consecutive
  edges pointing in the clockwise direction and $q$ consecutive edges
  pointing in the counterclockwise direction with $p \geq q$ and $p+q
  = n+1$.  We call a Coxeter element $w$ derived from such an
  orientation a \emph{$(p,q)$-bigon Coxeter element}.  We should also
  note that in many respects a Coxeter element of $\wt A_1$ can be
  considered a $(1,1)$-bigon Coxeter element.
\end{defn}

In the Coxeter group of type $\wt A_5$, the $62$ acyclic orientations
of its Coxeter diagram describe at most $3$ geometrically distinct
Coxeter elements since each is one is geometrically equivalent to a
$(p,q)$-bigon Coxeter element where $(p,q)$ is either $(5,1)$, $(4,2)$
or $(3,3)$.  More generally, the Coxeter group of type $\wt A_n$ has
at most $\frac{n+1}{2}$ geometrically distinct Coxeter elements since
this is an upper bound on $q$.  The following is the type $A$ analog
of Corollary~\ref{cor:bip-cox}.

\begin{cor}[Bigon Coxeter elements]\label{cor:bigon-cox}
  If $w$ is a Coxeter element of an irreducible euclidean Coxeter
  group of type $\wt A_n$, then there is a chamber $\sigma$ in its
  Coxeter complex which produces $w$ as one of its bigon Coxeter
  elements.
\end{cor}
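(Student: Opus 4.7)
The plan is to show that by iterated sink-source flips, any acyclic orientation of $\wt A_n$ whose associated Coxeter element is $w$ can be converted into a bigon-type orientation still producing the same element $w$, where Lemma~\ref{lem:sink-source} is invoked at each step to guarantee that only the chamber changes.

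First, I would fix any chamber $\sigma_0$ together with a linear ordering of its standard generators whose product equals $w$. This data determines an acyclic orientation $O_0$ of the cycle diagram $\wt A_n$, which I encode as a cyclic string of $\pm$ signs (one per edge, using $+$ for clockwise and $-$ for counterclockwise, say). In this encoding, a sink of the orientation is exactly an adjacent pattern $+-$ in the cyclic string and a source is exactly $-+$, so a bigon orientation is characterized by the string consisting of one block of $+$'s followed by one block of $-$'s, i.e.\ only two sign changes. Because $O_0$ is acyclic, both signs appear; let $p$ and $q$ denote their counts, so $p+q=n+1$ with $p,q\geq 1$.

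Next, I would observe that a single sink-source flip, in the language of the $\pm$ string, is precisely an adjacent transposition of a $+$ and a $-$: flipping a sink converts $+-$ into $-+$ and flipping a source converts $-+$ into $+-$. Since such opposite-sign adjacent transpositions generate the full symmetric group acting on cyclic strings with a fixed multiset of entries, one can realize any rearrangement of the signs, and in particular reach the target bigon string with $p$ consecutive $+$'s followed by $q$ consecutive $-$'s via a finite sequence of such transpositions. Every intermediate cyclic string still contains both signs (the counts are preserved) and hence corresponds to an acyclic orientation with at least one sink and one source, so every step of the sequence is a legitimate sink-source flip.

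Finally, I would iterate Lemma~\ref{lem:sink-source}: each flip moves from the current chamber to the adjacent chamber across the hyperplane of the flipped vertex, and the resulting chamber, with the updated linear ordering induced by the new orientation, produces the same element $w$. The chamber $\sigma$ obtained at the end of the sequence thus realizes $w$ as a bigon Coxeter element. The only substantive ingredient is the combinatorial claim that adjacent opposite-sign transpositions suffice to convert any cyclic $\pm$ string into a bigon with the same sign counts, which is a standard bubble-sort argument and poses no real obstacle.
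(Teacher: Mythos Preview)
Your proof is correct and takes essentially the same approach as the paper: the paper states this corollary without proof, calling it the type-$A$ analog of Corollary~\ref{cor:bip-cox}, and both rest on iterating Lemma~\ref{lem:sink-source} so that only the chamber changes while $w$ stays fixed. Your $\pm$-string encoding and bubble-sort argument are a clean way to make explicit the combinatorial step the paper leaves to the reader in Definition~\ref{def:bigon}.
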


The upshot of this analysis is that there is exactly one dual Artin
group up to isomorphism for each irreducible euclidean Artin group
that is not of type $A$ and when $W = \cox(\wt A_n)$ there are at most
$\frac{n+1}{2}$ such dual groups.

%%%%%%%%%%%%%%%%%%%%%%%%%%%%%%%%%%%%
\section{Bipartite Coxeter elements}
%%%%%%%%%%%%%%%%%%%%%%%%%%%%%%%%%%%%

The next two sections are a slight digression into the geometry of
bipartite Coxeter elements in irreducible euclidean Coxeter groups.
They are not needed to prove Theorem~\ref{main:dual} but this is a
convenient location to establish various results for use in the next
article in the series \cite{McSu-artin-euclid}.  Let $W$ denote an
irreducible euclidean Coxeter group that is not of type $A$, let $w$
be one of its Coxeter elements and let $\sigma$ be a chamber in its
Coxeter complex.  The goal in this section is to establish a close
geometric relationship between the axis of $w$ and the chambers
$\sigma$ that produce $w$ as a bipartite Coxeter element.  More
precisely, we show that these chambers are exactly those whose
interior intersects the axis of $w$ (Theorem~\ref{thm:pierced}).  We
begin by focusing on the geometry of $\sigma$.

\begin{defn}[Bipartite faces and subspaces]
  Since $W$ is not of type $A$, its Coxeter diagram $\Gamma$ is a tree
  with a unique bipartite structure.  For any chamber $\sigma$ in the
  Coxeter complex of $W$ this leads to a pair of distinguished
  disjoint faces in $\sigma$.  More explicitly, let $S_0 \sqcup S_1 =
  S$ be the bipartite partitioning of the reflections determined by
  the facets of $\sigma$ corresponding to the unique bipartite
  structure on $\Gamma$, let $F_i$ be the face of $\sigma$ determined
  by the intersection of the hyperplanes of the reflections in $S_i$
  and let $B_i$ be the affine hull of $F_i$.  Note that since each
  hyperplane is determined by the vertex of $\sigma$ that it does not
  contain, the face $F_0$ is the convex of the vertices not contained
  in the various reflections in $S_1$ and the $F_1$ is the convex hull
  of the vertices not contained in the various reflections in $S_0$.
  The affine subspaces $B_0$ and $B_1$ are disjoint since the
  hyperplanes determined by the facets of $\sigma$ have trivial
  intersection.  We call $F_0$ and $F_1$ the \emph{bipartite faces} of
  $\sigma$ and $B_0$ and $B_1$ the \emph{bipartite subspaces} of
  $\sigma$.
\end{defn}

In the $\wt G_2$ example $F_0$ and $F_1$ are a point and the
hypotenuse of the 30-60-90 right triangle, respectively, and $B_0$ and
$B_1$ are the point and line they determine.  Before continuing we
pause to record some elementary observations about euclidean
simplices.

\begin{rem}[Euclidean simplices]\label{rem:euc-simp}
  Consider the general situation where $\sigma$ is a euclidean
  $n$-simplex embedded in a euclidean space equal to its affine hull
  and $F_0$ and $F_1$ are disjoint faces of $\sigma$ that collectively
  contain all of its vertices (conditions satisfied by the bipartite
  faces defined above).  If $B_i$ is the affine hull of $F_i$, then
  $B_0$ and $B_1$ are disjoint, the linear subspaces $\dir(B_0)$ and
  $\dir(B_1)$ have trivial intersection and the subspace spanned by
  their union has codimension~$1$ in $V$.  In particular, $(\dir(B_0)
  \cup \dir(B_1))^\perp = \dir(B_0)^\perp \cap \dir(B_1)^\perp$ is a
  line $L$.  Next, there exist a unique pair of distinct points $x_i
  \in B_i$ that realize the minimal distance between $B_0$ and $B_1$.
  The uniqueness of $x_0$ and $x_1$ follows from the properties of
  $\dir(B_0)$ and $\dir(B_1)$ mentioned above.  Since the line segment
  connecting these distance minimizing points is necessarily in a
  direction orthogonal to both $\dir(B_0)$ and $\dir(B_1)$, its
  direction vector spans the line $L$.
\end{rem}

\begin{defn}[Bipartite lines and closest points]\label{def:bip-line}
  Let $B_0$ and $B_1$ be the bipartite subspaces of a chamber $\sigma$
  in the Coxeter complex of an irreducible euclidean Coxeter group
  that is not of type $A$.  The unique pair of points $x_i \in B_i$
  that realize the minimum distance between $B_0$ and $B_1$ are called
  \emph{closest points} and the unique line they determine is the
  \emph{bipartite line} of $\sigma$.
\end{defn}

In $W = \cox(\wt G_2)$, the points $x_0$ and $x_1$ are the vertex with
the right angle and the foot of the altitude dropped to the
hypotenuse.  In Figure~\ref{fig:g2-axis} the dashed line is the
bipartite line for each of the heavily shaded triangles through which
it passes.  In general, there is a practical method for finding the
direction of the bipartite line which is particularly useful once the
Coxeter complex has more dimensions than can be easily visualized.

\begin{rem}[Direction of the bipartite line]\label{rem:axis-dir}
  Let $\sigma$ be a chamber in the Coxeter complex of an irreducible
  euclidean Coxeter group that is not of type $A$.  The roots of the
  $n+1$ reflections in the corresponding simple system $S$ are
  linearly dependent and this essentially unique linear dependency
  must necessarily involve all $n+1$ roots since any proper subset is
  linear independent.  If we separate the terms of the equation
  according to the bipartite subdivision $S = S_0 \sqcup S_1$ so that
  the roots corresponding to the reflections in $S_0$ are on the left
  hand side and the roots corresponding to the reflections in $S_1$
  are on the right hand side, then the vector $\lambda$ described by
  either side of this equation is the direction of the bipartite line.
  To see this note that by construction $\lambda$ is nontrivial
  (because of the linear independent of proper subsets of the roots)
  and it can be written as a linear combination of either the $S_0$
  roots or the $S_1$ roots.  In particular, $\lambda$ is in
  $\dir(B_0)^\perp \cap \dir(B_1)^\perp = L$, where $L$ is the
  direction of the bipartite line of $\sigma$.  This procedure is used
  in Section~\ref{sec:computation}.
\end{rem}

Returning to the $\wt G_2$ example, notice that $x_i$ lies in the
interior of face $F_i$ rather than elsewhere in its affine hull $B_i$.
This is, in fact, always the case.

\begin{lem}[Interior]\label{lem:interior}
  When $\sigma$, $F_i$, $B_i$ and $x_i$ are defined as above, the
  closest point $x_i \in B_i$ lies in the interior of the face $F_i$.
  In particular, the bipartite line of $\sigma$ intersects the
  interior of $\sigma$.
\end{lem}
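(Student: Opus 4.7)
The plan is to use the explicit description of the bipartite line direction $\lambda$ from Remark~\ref{rem:axis-dir}, combined with the bipartite orthogonality of simple roots, to compute the barycentric coordinates of $x_0$ in $F_0$ and show they are all strictly positive. I would set things up as follows: let $\alpha_s$ denote the outward normal to the facet $H_s=\{x:\langle x,\alpha_s\rangle=c_s\}$ for $s\in S$. The essentially unique linear relation $\sum_{s\in S}\tilde c_s\alpha_s=0$ among these $n+1$ vectors has all $\tilde c_s>0$ by the standard theory of affine root systems (the marks of the extended Dynkin diagram), so by Remark~\ref{rem:axis-dir} the vector $\lambda=\sum_{s\in S_0}\tilde c_s\alpha_s=-\sum_{s\in S_1}\tilde c_s\alpha_s$ spans the line $L$.

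The key observation driving the computation is that $\Gamma$ is a bipartite graph, so any two distinct elements of the same class $S_i$ are non-adjacent in $\Gamma$; hence $m(s,s')=2$ and $\langle\alpha_s,\alpha_{s'}\rangle=0$. Pairing $\lambda$ against $\alpha_t$ then collapses to the single surviving term $\pm\tilde c_t\|\alpha_t\|^2$, with a plus sign for $t\in S_0$ and a minus sign for $t\in S_1$. Since $x_1-x_0$ lies in $L$, I would write $x_1-x_0=\mu\lambda$ for some $\mu\in\R$ and combine this with the defining equalities $\langle x_0,\alpha_s\rangle=c_s$ for $s\in S_0$ and $\langle x_1,\alpha_s\rangle=c_s$ for $s\in S_1$. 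Pairing against $\alpha_t$ for $t\in S_1$ then produces the clean identity
\[
\langle x_0,\alpha_t\rangle-c_t=\mu\,\tilde c_t\|\alpha_t\|^2.
\]

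To convert this into a statement about coordinates, I would expand $x_0=\sum_{s\in S_1}a_sp_s$ in the barycentric coordinates of $B_0$, so that $\sum_s a_s=1$. Using $\langle p_s,\alpha_t\rangle=c_t$ when $s\ne t$ (since $p_s\in H_t$ in that case) and setting $d_t:=c_t-\langle p_t,\alpha_t\rangle>0$, a routine expansion gives $\langle x_0,\alpha_t\rangle-c_t=-a_td_t$. Equating yields $a_t=-\mu\,\tilde c_t\|\alpha_t\|^2/d_t$, so every $a_t$ has the sign of $-\mu$, and the constraint $\sum_t a_t=1$ then forces $\mu<0$ and $a_t>0$ for every $t\in S_1$. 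This is exactly the statement that $x_0$ lies in the relative interior of $F_0$, and the completely symmetric computation produces $x_1$ in the relative interior of $F_1$. The ``in particular'' claim then follows immediately, since any interior point of the segment $[x_0,x_1]$ is a strictly positive barycentric combination of all $n+1$ vertices of $\sigma$ (because the vertex sets of $F_0$ and $F_1$ are disjoint and together exhaust the vertices of $\sigma$).

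The main obstacle I anticipate is confirming that the unique relation among the $\alpha_s$ has coefficients all of one sign, i.e.\ the positivity of the affine marks; this is the only input beyond direct manipulation of inner products. Once that sign behavior is in hand, the rest is a straightforward comparison of the two expressions for $\langle x_0,\alpha_t\rangle-c_t$, and both assertions of the lemma fall out at once.
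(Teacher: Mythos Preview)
Your argument is correct, and it takes a genuinely different route from the paper. The paper argues metrically: it considers points $y_i\in F_i$ realizing the minimum distance between the compact faces $F_0$ and $F_1$, and uses the non-obtuse dihedral angles of $\sigma$ together with the connectedness of $\Gamma$ to show that if $y_0$ were on the boundary of $F_0$ one could shorten the distance by pushing $y_0$ into a higher-dimensional face, a contradiction. Having placed each $y_i$ in the interior of $F_i$, the segment $y_0y_1$ is orthogonal to both $B_i$ and hence in the direction $L$, so $x_i=y_i$ by uniqueness of the closest-point pair. Your approach, by contrast, is an explicit barycentric computation exploiting two algebraic features specific to this situation: the bipartite orthogonality $\langle\alpha_s,\alpha_{s'}\rangle=0$ for distinct $s,s'\in S_i$, and the positivity of the coefficients $\tilde c_s$ in the unique linear relation among the outward normals.

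What each buys: the paper's argument isolates the geometric reason (acute angles force interior minimizers) and would adapt to any simplex with non-obtuse dihedral angles and connected diagram, without reference to a bipartite structure. Your argument yields more, namely the explicit formula $a_t=-\mu\,\tilde c_t\|\alpha_t\|^2/d_t$ for the barycentric coordinates of $x_0$, and it makes transparent exactly which inputs are being used. Note, incidentally, that the positivity of the $\tilde c_s$ follows already from the simplex geometry (pairing the relation against each vertex $p_t$ gives $\tilde c_t d_t$ equal to a constant independent of $t$), so the appeal to affine marks is not really an external input.
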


\begin{proof}
  Let $y_i \in F_i$ be points that realize the minimum distance
  between the faces $F_0$ and $F_1$.  We first show that $y_0$ lies in
  the interior of $F_0$.  The key facts are that every dihedral angle
  in $\sigma$ is non-obtuse and that the defining diagram $\Gamma$ is
  connected.  In particular, if $y_0$ is in the boundary of $F_0$ then
  there there exist hyperplanes $H_0$ and $H_1$ determined by facets
  with $H_i \supset F_i$ and $y_0 \in H_1 \cap F_0$ such that the
  dihedral angle between $H_0$ and $H_1$ is acute.  This means that
  the distance can be shrunk by moving $y_0$ into the interior of a
  higher dimensional face of $F_0$, contradiction.  Thus $y_0$ is in
  the interior of $F_0$.  After reversing the roles of $0$ and $1$, we
  see that $y_1$ is in the interior of $F_1$.  This means that the
  vector from $y_0$ to $y_1$ is orthogonal to both affine spans and is
  in the direction of the line $L$.  In particular, the points $x_0$,
  $y_0$, $y_1$ and $x_1$ form a possibly degenerate rectangle where
  $x_0$ and $x_1$ are the unique points realizing the minimum distance
  between $B_0$ and $B_1$.  But because $\dir(B_0)$ and $\dir(B_1)$
  have no nontrivial vector in common, $x_0 = y_0$ and $x_1 = y_1$.
\end{proof}

The next step is to establish that the bipartite line of a chamber
is the min-set of its bipartite Coxeter elements.

\begin{defn}[Bipartite involutions]\label{def:bip-inv}
  Let $\sigma$ be a chamber in a Coxeter complex of $W$, let $S$ be
  the corresponding simple system, and let $S = S_0 \sqcup S_1$ be its
  bipartite decomposition.  Because the reflections in $S_i$ pairwise
  commute, the product $w_i$ of the reflections in $S_i$ is
  independent of the order in which they are multiplied and it is an
  involution.  We call $w_0$ and $w_1$ the \emph{bipartite
    involutions} of $\sigma$ and note that the two bipartite Coxeter
  elements of $\sigma$ are $w = w_1 w_0$ and $w^{-1} = w_0 w_1$.
  Geometrically, $w_i$ fixes $B_i$ pointwise and (if we pick a point
  in $B_i$ as the origin) it acts as the antipodal map on its
  orthogonal complement.
\end{defn}

In the $\wt G_2$ example, $w_0$ is a $180^\circ$ rotation about $x_0$
and $w_1$ is a reflection fixing the horizontal line $B_1$.  The
description of the action of $w_i$ given above establishes the
following lemma from which we conclude that the bipartite line of a
chamber is the axis of its bipartite Coxeter elements.

\begin{lem}[Reflecting the bipartite line]\label{lem:refl-bip-line}
  If $\sigma$ is a chamber in the Coxeter complex of $W$ with closest
  points $x_i$ and bipartite line $L$, then its bipartite involution
  $w_i$ restricts to a reflection on $L$ fixing only $x_i$.
\end{lem}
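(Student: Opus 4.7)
The plan is to read off the conclusion directly from the explicit description of $w_i$ supplied in Definition~\ref{def:bip-inv} together with the characterization of the direction of $L$ recorded in Remark~\ref{rem:euc-simp}. First I would fix the point $x_i \in B_i$ as an origin. This is legitimate because Definition~\ref{def:bip-inv} gives that $w_i$ fixes every point of $B_i$ and acts on the orthogonal complement $\dir(B_i)^\perp$ as the antipodal map $v \mapsto -v$ relative to any chosen basepoint in $B_i$; choosing $x_i$ keeps $x_i$ fixed at the origin of that orthogonal decomposition.

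Next, I would identify the direction $L_V$ of the bipartite line. By Remark~\ref{rem:euc-simp}, the line $L_V = \dir(B_0)^\perp \cap \dir(B_1)^\perp$ is the common direction of minimal-distance segments between $B_0$ and $B_1$, and Definition~\ref{def:bip-line} says $L$ is precisely the line through $x_0$ and $x_1$. Since $x_i \in B_i \subset L$ and $L_V \subset \dir(B_i)^\perp$, we have $L = x_i + L_V \subset x_i + \dir(B_i)^\perp$. Picking any nonzero $\lambda \in L_V$, the line $L$ is parametrized by $t \mapsto x_i + t\lambda$.

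Finally, combining these two steps, for each $t \in \R$ the antipodal description gives
\[
w_i(x_i + t\lambda) \;=\; x_i - t\lambda,
\]
so $w_i$ preserves $L$ setwise and acts on it by $t \mapsto -t$ in the above parametrization. This is manifestly a reflection of the $1$-dimensional euclidean space $L$ whose only fixed point is $x_i$, which is the claim.

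There is essentially no serious obstacle here; the content of the lemma is just the correct alignment of the two pieces of geometric data. The only subtlety to be careful about is that the antipodal description of $w_i$ requires a basepoint in $B_i$, so one must use $x_i \in B_i$ (which lies in $B_i$ by Definition~\ref{def:bip-line}, reinforced by Lemma~\ref{lem:interior}) rather than some arbitrary point of $L$; once $x_i$ is adopted as origin, the orthogonal decomposition $V = \dir(B_i) \oplus \dir(B_i)^\perp$ restricts cleanly to the line $L$ and the conclusion is immediate.
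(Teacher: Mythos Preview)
Your argument is correct and is exactly the approach the paper intends: the lemma is stated without proof, with the preceding sentence saying it follows directly from the description of $w_i$ in Definition~\ref{def:bip-inv}, and you have simply written that out explicitly using Remark~\ref{rem:euc-simp} to identify $\dir(L) \subset \dir(B_i)^\perp$. One small slip: you write ``$x_i \in B_i \subset L$'', but $B_i$ is not contained in $L$; what you need (and use) is just that $x_i \in L$ and $\dir(L) \subset \dir(B_i)^\perp$, so this is a harmless typo.
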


\begin{prop}[Bipartite lines as axes]\label{prop:bip-axes}
  The bipartite line of any chamber $\sigma$ in the Coxeter complex of
  $W$ is the axis of the bipartite Coxeter elements produced by
  $\sigma$.
\end{prop}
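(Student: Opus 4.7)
The plan is to combine the preceding lemma on the action of the bipartite involutions on $L$ with the uniqueness characterization of the min-set recalled in Remark~\ref{rem:aux} (third useful result), which identifies the min-set of a hyperbolic isometry as the unique affine subspace of the appropriate dimension that is stabilized and on which every point undergoes the same motion.

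First, I would invoke Proposition~\ref{prop:cox-elt-iso} to know that each bipartite Coxeter element $w$ produced by $\sigma$ is hyperbolic with min-set a line in $E$; the task is then to show this line is exactly $L$. Next, applying Lemma~\ref{lem:refl-bip-line} to both $w_0$ and $w_1$, I get that $w_0$ and $w_1$ each restrict to a genuine reflection of the line $L$, fixing precisely $x_0$ and $x_1$ respectively. In particular $L$ is stabilized by each $w_i$ and hence by the bipartite Coxeter element $w = w_1 w_0$ (and also by $w^{-1}$).

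The composition on $L$ of two reflections of $L$ fixing the distinct points $x_0$ and $x_1$ is a translation of $L$ along itself: explicitly, $w$ restricted to $L$ is translation by the vector $2(x_1 - x_0)$, so every point of $L$ is moved by the same vector under $w$. Thus $L$ is a one-dimensional affine subspace of $E$ that is stabilized by $w$ and on which every point undergoes the same motion. Since by Proposition~\ref{prop:cox-elt-iso} the min-set $\ms(w)$ is also a line (one-dimensional), the uniqueness clause in Remark~\ref{rem:aux} (cited from [Proposition~\PropIdentMin]{BrMc-factor}) forces $L = \ms(w)$; that is, $L$ is the axis of $w$. Because $w^{-1}$ has the same min-set as $w$, the same conclusion holds for both bipartite Coxeter elements of $\sigma$.

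The argument is essentially a two-line reduction once Lemma~\ref{lem:refl-bip-line} is in hand; the only subtle point to check is that Remark~\ref{rem:aux}'s uniqueness really applies in the dimension we need, which it does because Proposition~\ref{prop:cox-elt-iso} pins down the dimension of the min-set of a Coxeter element as exactly one. So I do not anticipate a genuine obstacle here, and the main content of the proof is just assembling these three ingredients in the right order.
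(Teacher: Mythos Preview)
Your proposal is correct and follows essentially the same approach as the paper's proof: use Lemma~\ref{lem:refl-bip-line} to see that $w=w_1w_0$ stabilizes $L$ and acts on it as a translation, then invoke the min-set characterization from Remark~\ref{rem:aux} to conclude $L=\ms(w)$. Your version is slightly more explicit in citing Proposition~\ref{prop:cox-elt-iso} to verify the dimension hypothesis needed for the uniqueness clause, which the paper leaves implicit.
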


\begin{proof}
  Let $w_0$ and $w_1$ be the bipartite involutions of $\sigma$.  By
  Lemma~\ref{lem:refl-bip-line} the product $w = w_1w_0$ stabilizes
  $L$ and acts as a translation on $L$.  By the characterization of
  min-sets quoted in Remark~\ref{rem:aux}, $L$ is the axis of $w$.
  The same reasoning show that $L$ is the axis of $w^{-1} = w_0 w_1$.
\end{proof}

The bipartite involutions of $\sigma$ can be used to extend our
notation.

\begin{defn}[Axial chambers]\label{def:axial-notation}
  Let $\sigma$ be a chamber in the Coxeter complex of $W$ with
  bipartite faces $F_0$ and $F_1$, bipartite subspaces $B_0$ and
  $B_1$, and closest points $x_0$ and $x_1$.  The bipartite
  involutions $w_0$ and $w_1$ define an infinite dihedral group action
  on the line $L$ through $x_0$ and $x_1$.  Using this action, we can
  extend the definitions of $F_i$, $B_i$, $x_i$ and $\sigma_i$ to
  arbitrary subscripts $i \in \Z$ by letting $F_{-i}$ / $B_{-i}$ /
  $x_{-i}$ denote the image of $F_i$ / $B_i$ / $x_i$ under $w_0$ and
  letting $F_{2-i}$ / $B_{2-i}$ / $x_{2-i}$ denote the image of $F_i$
  / $B_i$ / $x_i$ under $w_1$.  The result is a sequence of equally
  spaced points $x_i$ that occur in order along $L$, one for each
  $i\in \Z$.  Finally, let $\sigma_i$ denote the image of $\sigma$
  under this dihedral group action that contains $x_i$ and $x_{i+1}$
  (so that $\sigma = \sigma_0$).  We call these chambers \emph{axial
    chambers} and their vertices are \emph{axial vertices}.
\end{defn}

The axial chambers in Figure~\ref{fig:g2-axis} are the ones that are
heavily shaded.  One fact about the arrangement of the chambers
$\sigma_i$ along the axis $L$ that is important to note is that every
point of $L$ that is not one of the points $x_i$ lies in the interior
of some chamber $\sigma_i$.

\begin{thm}[Axial chambers and Coxeter elements]\label{thm:pierced}
  Let $L$ be the axis of a Coxeter element $w$ for an irreducible
  euclidean Coxeter group $W$ that is not of type $A$ and let $\sigma$
  be a chamber in its Coxeter complex.  The chamber $\sigma$ produces
  $w$ as a bipartite Coxeter element iff the line $L$ intersects the
  interior of $\sigma$.
\end{thm}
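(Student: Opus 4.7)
The plan is to handle the two directions separately. The forward direction is essentially immediate: if $\sigma$ produces $w$ as a bipartite Coxeter element, then Proposition~\ref{prop:bip-axes} identifies $L$ with the bipartite line of $\sigma$ and Lemma~\ref{lem:interior} says precisely that this line meets the interior of $\sigma$.

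For the reverse direction, I would first invoke Corollary~\ref{cor:bip-cox} to produce \emph{some} chamber $\sigma'$ having $w$ as a bipartite Coxeter element, and then work outward from $\sigma'$ along its axial chambers $\sigma'_i$. The overall plan is to show that every axial chamber $\sigma'_i$ also produces $w$ as a bipartite Coxeter element, and that the hypothesis on $\sigma$ then forces $\sigma$ to coincide with one of these axial chambers.

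The crux is the first claim. I would handle $\sigma'_1 = w_1(\sigma')$ by direct conjugation: since each element of $S_1$ commutes with the product $w_1$, conjugation by $w_1$ fixes $S_1$ pointwise, so the simple system for $\sigma'_1$ is $S_1 \sqcup \{w_1 s w_1 \mid s \in S_0\}$ with bipartite partition inherited from the unique bipartite structure on the tree $\Gamma$. The two corresponding bipartite involutions of $\sigma'_1$ are $w_1$ and $w_1 w_0 w_1$, whose products $w_1 \cdot w_1 w_0 w_1 = w_0 w_1 = w^{-1}$ and $w_1 w_0 w_1 \cdot w_1 = w_1 w_0 = w$ recover $w$ itself. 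A symmetric calculation handles $\sigma'_{-1} = w_0(\sigma')$, and because $\sigma'_{i+2} = w(\sigma'_i)$ and conjugation by $w$ fixes $w$, the conclusion propagates to every axial chamber by induction on $|i|$.

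To close the argument, choose any point $p$ in $L \cap \mathrm{int}(\sigma)$. Each axial vertex $x_i$ lies in the relative interior of a proper face of the Coxeter complex (an image of $F_0$ or $F_1$ under the dihedral group action, using Lemma~\ref{lem:interior}) and therefore is not in the interior of any chamber, so $p$ is not any $x_i$. The fact recorded after Definition~\ref{def:axial-notation} then places $p$ in the interior of some axial chamber $\sigma'_i$, and disjointness of chamber interiors forces $\sigma = \sigma'_i$. I expect the main obstacle to be making the conjugation bookkeeping airtight, particularly verifying that the bipartite partition of the conjugated simple system is truly the one determined by the diagram $\Gamma$ rather than some accidental partition arising from the conjugation.
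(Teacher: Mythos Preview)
Your proof is correct and follows essentially the same route as the paper: the forward direction via Proposition~\ref{prop:bip-axes} and Lemma~\ref{lem:interior}, and the reverse direction by starting from a chamber guaranteed by Corollary~\ref{cor:bip-cox}, covering $L$ (minus the discrete points $x_i$) by the interiors of its axial chambers, and invoking disjointness of chamber interiors. The one difference is that you explicitly verify, via conjugation by $w_0$ and $w_1$, that each axial chamber $\sigma'_i$ again produces $w$ as a bipartite Coxeter element---a step the paper leaves implicit; your conjugation argument is correct (and is essentially a simultaneous application of Lemma~\ref{lem:sink-source} to all of $S_0$ or all of $S_1$), so your worry about the bipartite partition is unfounded.
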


\begin{proof}
  When $\sigma$ produces $w$ as a bipartite Coxeter element then by
  Proposition~\ref{prop:bip-axes} the axis of $w$ is the bipartite
  line of $\sigma$ and by Lemma~\ref{lem:interior} this line
  intersects the interior of $\sigma$.  In the other direction, we
  know that there is at least one chamber $\sigma$ that produces $w$
  as a bipartite Coxeter element and this $\sigma$ contains a portion
  of $L$ in its interior.  It is sufficient to note that interiors of
  chambers are disjoint open subsets of $A$ and that the interiors of
  the axial chambers under the infinite dihedral group action
  generated by the bipartite involutions of $\sigma$ contain all of
  $L$ except the discrete set of points $x_i$ with $i\in \Z$. 
\end{proof}

Another way to phrase this result is that a chamber $\sigma$ produces
$w$ as a bipartite Coxeter element iff $\sigma$ is a chamber in the
smallest simplicial subcomplex containing the axis of $w$.  We
conclude this section with one final observation.

\begin{cor}[Hyperplanes crossing the axis]\label{cor:hyper-axis}
  Let $L$ be the axis of a Coxeter element $w$ in an irreducible
  euclidean Coxeter group $W$ that is not of type $A$.  If a
  hyperplane $H$ of a reflection in $W$ crosses the line $L$ then $H$
  is determined by a facet of an axial simplex.  More precisely, there
  is an index $i$ such that $H$ contains all of $F_i$, all but one
  vertex of $F_{i-1}$ and all but one vertex of $F_{i+1}$.
\end{cor}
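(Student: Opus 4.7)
The strategy rests on two ingredients. By Lemma~\ref{lem:interior} and the construction of the axial chambers, $L$ lies in the interior of each $\sigma_j$ except at the axial vertices $x_j$; this localizes any transverse crossing of $L$ by a reflection hyperplane to one of these vertices. Moreover, because $W$ is not of type $A$, its diagram $\Gamma$ is a tree, so neither half of the bipartite decomposition $S = S_0 \sqcup S_1$ contains an edge of $\Gamma$; this forces the pointwise stabilizer of each bipartite face to be a product of copies of $\Z/2$ with a mutually orthogonal wall arrangement in the corresponding link.

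First I would establish $H \cap L = \{x_i\}$ for some index $i$. The decomposition $L = \bigcup_j [x_j, x_{j+1}]$ together with Lemma~\ref{lem:interior} places each open segment $(x_j, x_{j+1})$ in the interior of $\sigma_j$. Since the hyperplane of any reflection in a Coxeter group is a subcomplex of the Coxeter complex and thus disjoint from every chamber interior, the single transverse intersection point of $H$ with $L$ must be one of the axial vertices, call it $x_i$. To upgrade this observation to $B_i \subseteq H$, note that $x_i$ lies in the relative interior of $F_i \subseteq \sigma_{i-1}$ while $\sigma_{i-1}$ lies on one side of $H$; a convex face meeting a supporting hyperplane at a relative interior point lies entirely in that hyperplane, so $F_i \subseteq H$ and therefore $B_i \subseteq H$.

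The main step, and the one requiring the non-$A$ hypothesis, is the vertex count on $F_{i-1}$ and $F_{i+1}$. Since $H \supseteq B_i$, the reflection with hyperplane $H$ belongs to the pointwise stabilizer of $F_i$ in $W$, which is generated by exactly the simple reflections $r_s$ whose corresponding vertex $v_s$ lies outside $F_i$; these $s$ make up one of the two parts of the bipartite decomposition of $S$. Because $\Gamma$ is a tree, this part contains no edge of $\Gamma$, so its reflections pairwise commute and the stabilizer is isomorphic to $(\Z/2)^r$ where $r$ is the size of that part. Its walls in the spherical link of $F_i$ therefore form a mutually orthogonal arrangement cutting the link sphere into $2^r$ orthants, and in such an arrangement every wall of the group is a wall of every chamber. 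In particular $H$ is a wall of the orthant corresponding to $\sigma_{i-1}$, so $H \cap \sigma_{i-1}$ is a facet containing all of $F_i$ together with all but one vertex of $F_{i-1}$; applying the same argument to $\sigma_i$ gives the corresponding statement for $F_{i+1}$. The only genuine obstacle is pinning down the algebraic structure of the stabilizer, and this reduces to the elementary observation that each part of the bipartite decomposition of a tree is an independent set of vertices.
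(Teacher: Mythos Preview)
Your argument is correct and follows essentially the same route as the paper's proof: localize $H\cap L$ to some $x_i$ using that the open segments of $L$ lie in chamber interiors, deduce $F_i\subset H$ from the fact that $x_i$ is a relative interior point of $F_i$, and then identify the pointwise stabilizer of $F_i$ as the commuting product $\langle S_i\rangle\cong(\Z/2)^r$ whose orthogonal wall arrangement forces $H$ to be one of the facet hyperplanes of $\sigma_{i-1}$ (and of $\sigma_i$). The paper phrases the last step as ``the link of $B_i$ in $\sigma_i$ is an orthant and hence a chamber of the finite reflection subgroup fixing $B_i$,'' but this is exactly your observation that in a coordinate-hyperplane arrangement every wall bounds every chamber.
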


\begin{proof}
  The intersection of $H$ and $L$ must occur at one of the points
  $x_i$ since the remainder of $L$ is covered by the interiors of
  axial simplices as noted after Definition~\ref{def:axial-notation}.
  Moreover, since $x_i$ lies in the interior of the face $F_i$, $H$
  contains all of $F_i$ and thus all of its affine hull $B_i$.  Next,
  recall that the facets of $\sigma_i$ containing $F_i$ determine
  hyperplanes representing pairwise commuting reflections, and as a
  result they intersect the orthogonal complement of $B_i$ (based at
  $x_i$) in an arrangement that looks like the standard coordinate
  hyperplanes with the link of $B_i$ in $\sigma_i$ forming one of its
  orthants.  As this orthant is a chamber of the finite reflection
  subgroup fixing $B_i$, it is clear that these are the only
  hyperplanes of $W$ that contain $B_i$.  In particular, $H$ must
  itself be a hyperplane determined by a facet of $\sigma_i$
  containing $F_i$ and thus have the listed properties.
\end{proof}

%%%%%%%%%%%%%%%%%%%%%
\section{Reflections}
%%%%%%%%%%%%%%%%%%%%%

Let $w$ be a Coxeter element in an irreducible euclidean Coxeter group
$W$ that is not of type $A$.  In this section we determine the set
$R_0$ of reflections that occur in some minimal length reflection
factorization of $w$.  We call these the reflections below $w$.

\begin{defn}[Reflections below $w$]
  It follows easily from Lemma~\ref{lem:rewriting} that for any
  reflection $r$ in $W$ the following conditions are equivalent: (1)
  $\lr(rw) < \lr(w)$ (2) $r$ is the leftmost reflection in some
  minimal length factorization of $w$ (3) $r$ is a reflection in some
  minimal length factorization of $w$ (4) $r$ is the rightmost
  reflection in some minimal length factorization of $w$ and (5)
  $\lr(wr) < \lr(w)$.  When these conditions hold, we say that $r$ is
  a \emph{reflection below $w$}.
\end{defn}

When analysizing the reflections below a Coxeter element in an
irreducible euclidean Coxeter group it is useful to distinguish two
types of reflections.

\begin{defn}[Vertical and horizontal reflections]\label{def:v-h}
  The Coxeter axis is a line in the euclidean space $E$ and its space
  of directions is a line in the vector space $V$ that has a
  hyperplane as its orthogonal complement.  We call the vectors in
  this hyperplane \emph{horizontal} and those in this line
  \emph{vertical}.  More generally, any vector with a nontrivial
  vertical component (i.e. any vector not in the hyperplane) is also
  called vertical.  Using this distinction, we separate the
  reflections below $w$ into two types based on the type of its roots.
  In other words, a reflection $r$ is \emph{horizontal} if its root is
  orthogonal to the direction of the Coxeter axis and \emph{vertical}
  otherwise.
\end{defn}

The main reason to distinquish vertical and horizontal reflections is
that when a Coxeter element of an irreducible euclidean Coxeter group
is multiplied by a vertical reflection, the result is elliptic and
when it is multiplied by a horizontal reflection, the result is
hyperbolic \cite{BrMc-factor}.

\begin{lem}[Vertical reflections]\label{lem:vert-refl}
  Let $L$ be the axis of a Coxeter element $w$ in an irreducible
  euclidean Coxeter group $W$ that is not of type $A$ and let $H$ be
  the hyperplane of a vertical reflection in $W$ that intersects $L$
  at the point $x_i$.  If $u$ and $v$ are the unique vertices of
  $F_{i-1}$ and $F_{i+1}$ not contained in $H$, then $w$ sends $u$ to
  $v$, $r$ swaps $u$ and $v$, $rw$ fixes $u$, and $wr$ fixes $v$.
  Moreover, the elliptic isometry $rw$ is a Coxeter element for the
  finite Coxeter subgroup of $W$ that stabilizes $u$ and the elliptic
  isometry $wr$ is a Coxeter element for the finite Coxeter subgroup
  of $W$ that stabilizes $v$.
\end{lem}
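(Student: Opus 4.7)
The plan is to start by applying Corollary~\ref{cor:hyper-axis}, which identifies $H$ as a common facet of the axial chambers $\sigma_{i-1}$ (opposite the vertex $u \in F_{i-1}$) and $\sigma_i$ (opposite the vertex $v \in F_{i+1}$). Since $\sigma_{i-1}$ and $\sigma_i$ share the facet $H$, they are reflected images of one another across $H$, so $r$ fixes every vertex they have in common and exchanges the two unshared vertices $u$ and $v$. This settles the claim that $r$ swaps $u$ and $v$.

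For the identity $w(u) = v$, I would factor $w$ via the bipartite structure of $\sigma_{i-1}$. Let $a$ be the bipartite involution fixing $B_{i-1}$ pointwise and let $b$ be the one fixing $B_i$ pointwise. Restricting to $L$ and using Proposition~\ref{prop:bip-axes} together with the axis-translation computation built into Definition~\ref{def:axial-notation}, one checks that $w = ba$ for the orientation of $L$ in which $w$ carries $F_{i-1}$ onto $F_{i+1}$. Since $u \in F_{i-1}$ and $a$ fixes $B_{i-1}$ pointwise, we have $a(u) = u$ and therefore $w(u) = b(u)$. Now $b$ is the product of the pairwise commuting reflections through those facets of $\sigma_{i-1}$ that contain $F_i$, and the facet indexed by the vertex $u$ of $F_{i-1}$ is precisely $H$; writing $b = rb'$ with $b'$ collecting the remaining commuting factors, every hyperplane contributing to $b'$ contains $u$, so $b'(u) = u$ and hence $w(u) = b(u) = r(b'(u)) = r(u) = v$. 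The identities $rw(u) = r(v) = u$ and $wr(v) = w(u) = v$ follow immediately.

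For the ``Moreover'' claim, substituting $b = rb'$ into $w = ba$ yields $rw = b'a$, a product of all $n$ reflections in $S \setminus \{r\}$, where $S$ is the simple system associated to $\sigma_{i-1}$. Each such reflection has as its hyperplane a facet of $\sigma_{i-1}$ passing through $u$, so all of them fix $u$, and locally at $u$ these facets bound $\sigma_{i-1}$ as a chamber for the finite reflection subgroup $W_u$ stabilizing $u$. Thus $S \setminus \{r\}$ is a simple system for $W_u$ and the reordered product $b'a$ is a Coxeter element of $W_u$. The claim about $wr$ is obtained by a symmetric argument inside $\sigma_i$: its bipartite decomposition also places $r$ in the half that fixes $F_i$, and the analogous computation produces $wr = dc''$, which is a Coxeter element of $W_v$ by the same reasoning. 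I expect the main obstacle to be the bookkeeping of which bipartite half of the simple system contains $r$ in each axial chamber together with the verification that removing $r$ from that simple system yields a simple system for the stabilizer of the correct point; both reduce to the facet--vertex correspondence in an axial simplex combined with the standard local structure of the Coxeter complex at a vertex.
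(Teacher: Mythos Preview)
Your opening step contains a genuine gap: the axial chambers $\sigma_{i-1}$ and $\sigma_i$ do \emph{not} in general share a facet lying in $H$, and they are \emph{not} reflected images of one another under $r$. By Definition~\ref{def:axial-notation} these two chambers meet only along the face $F_i$, whose codimension equals $|S_i|$ and is typically larger than~$1$. (Already in $\wt G_2$, when $F_i$ is the right-angle vertex the two adjacent axial triangles share only that point.) The facet of $\sigma_{i-1}$ lying in $H$ has vertex set $(F_{i-1}\setminus\{u\})\cup F_i$, while the facet of $\sigma_i$ lying in $H$ has vertex set $F_i\cup(F_{i+1}\setminus\{v\})$; these are different simplices in the same hyperplane. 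Consequently $r(\sigma_{i-1})$ is a chamber adjacent to $\sigma_{i-1}$ across $H$, but there is no reason for it to be $\sigma_i$, and your deduction $r(u)=v$ collapses.

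The remainder of your argument---the bipartite factorization $w=ba$ with $a=w_{i-1}$, $b=w_i$, the computation $w(u)=b(u)$, the decomposition $b=rb'$ and the identification of $rw=b'a$ as a Coxeter element of the stabilizer of $u$---matches the paper exactly. The paper fills the gap differently: rather than proving $r(u)=v$ first, it observes that the bipartite involution $w_i$ (your $b$) sends $\sigma_{i-1}$ to $\sigma_i$ by construction and stabilizes $H$ (since $H\supset B_i$ and $w_i$ acts as $-1$ on $B_i^\perp$), hence carries the unique vertex $u$ of $\sigma_{i-1}$ outside $H$ to the unique vertex $v$ of $\sigma_i$ outside $H$. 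This gives $w(u)=w_i(u)=v$ directly, and the claim that $r$ swaps $u$ and $v$ is then \emph{deduced} at the end from $w(u)=v$, $rw(u)=u$, and $r^2=1$. Replacing your first paragraph with this vertex-counting argument for $w_i$ repairs the proof.
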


\begin{proof}
  Recall that $w_i$ fixes $F_i$ and acts as the antipodal map on its
  orthogonal complement sending $\sigma_{i-1}$ to $\sigma_i$.  In
  particular, it stablizes any hyperplane that contains $F_i$ and thus
  sends the vertices in $H \cap \sigma_{i-1}$ to the vertices in $H
  \cap \sigma_i$.  This means that $w_i$ must send the one remaining
  vertex $u$ to the one remaining vertex $v$.  Because $w = w_i
  w_{i-1}$ and $w_{i-1}$ fixes all of $F_{i-1}$, we have that $w(u) =
  w_i w_{i-1} (u) = w_i (u) = v$.  Next, because $r$ is one of the
  commuting reflections whose product is $w_i$, $r w_i$ is the same
  product with $r$ deleted and $rw = r w_i w_{i-1}$ is the product of
  all the reflections defined by facets of $\sigma_{i-1}$ except the
  reflection defined by the facet whose affine span is the hyperplane
  of $r$.  In particular, all of these reflections contain the vertex
  $u$ in their fixed hyperplanes, their product fixes $u$ and they
  bound a spherical simplex formed by intersecting $\sigma_{i-1}$ with
  a small sphere centered at $u$.  Since these reflections bound a
  chamber of the corresponding spherical Coxeter complex, their
  product $rw$ is a Coxeter element for this subgroup.  Similarly, the
  factorization $w = w_{i+1} w_i$ shows that $wr$ is a product of the
  reflections determined by the facets of $\sigma_i$ with $r$ removed
  and this product is a Coxeter element for the stabilizer of $v$.
  Finally $r$ swaps $u$ and $v$ since $w$ sends $u$ to $v$, $rw$ fixes
  $u$, and $r$ has order $2$.
\end{proof}

From Lemma~\ref{lem:vert-refl} the following is immediate.

\begin{prop}[Vertical reflections]\label{prop:vert-refl}
  Let $w$ be a Coxeter element in an irreducible euclidean Coxeter
  group $W$ that is not of type $A$.  If $r$ is a reflection that is
  vertical with respect to axis of $w$ then $r$ contains many axial
  vertices in its fixed hyperplane, it is part of a bipartite
  factorization of $w$ and it is contained in the set $R_0$.
\end{prop}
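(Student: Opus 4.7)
The plan is to reduce everything to Lemma~\ref{lem:vert-refl} together with Corollary~\ref{cor:hyper-axis}, since once we know where the fixed hyperplane of $r$ meets the axis $L$, all three conclusions fall out of structure already established. The single nontrivial step is pinning down this intersection to one of the axial vertices $x_i$.

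First I would argue that the fixed hyperplane $H$ of a vertical reflection $r$ crosses $L$ transversely in a single point. By Definition~\ref{def:v-h}, the root $\alpha$ of $r$ has a nontrivial component in the direction of $L$, so $\alpha$ is not orthogonal to $\dir(L)$ and hence $\dir(L) \not\subset \dir(H) = \alpha^\perp$. This forces $H$ and $L$ to meet in exactly one point. To show this point is some $x_i$, I would use the observation recorded just after Definition~\ref{def:axial-notation} that every point of $L$ other than the vertices $x_i$ lies in the interior of some axial chamber $\sigma_j$; a reflecting hyperplane in $W$ cannot cross the interior of a chamber, so $H \cap L = \{x_i\}$ for some $i \in \Z$.

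With $H$ anchored at $x_i$, Corollary~\ref{cor:hyper-axis} applies and immediately yields the first assertion: $H$ contains all of $F_i$, all but one vertex of $F_{i-1}$, and all but one vertex of $F_{i+1}$, so $H$ contains many axial vertices. The same corollary identifies $H$ as the hyperplane of a facet of an axial chamber whose affine hull contains $F_i$. The reflections through such facets pairwise commute, and their product is the bipartite involution $w_i$ of Definition~\ref{def:axial-notation} extended to arbitrary $i$. In particular $r$ appears as a factor of $w_i$, and we may write $w_i = r \cdot w_i'$ where $w_i'$ is the product of the remaining commuting reflections.

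Finally, by Theorem~\ref{thm:pierced} the chamber $\sigma_i$ produces $w$ as a bipartite Coxeter element, so $w = w_{i+1} w_i = w_{i+1} \cdot r \cdot w_i'$ is an explicit bipartite factorization of $w$ containing $r$. Expanding this product as a word in reflections yields $n+1$ reflections, matching $\ell_R(w) = n+1$ from Proposition~\ref{prop:cox-elt-iso}, so this is a minimum length reflection factorization. This places $r$ inside a bipartite factorization of $w$ and shows $r \in R_0$. The main obstacle, as noted, is the initial localization of $H \cap L$; the rest is bookkeeping within the framework already set up.
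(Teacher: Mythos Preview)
Your proof is correct and follows essentially the same route as the paper: the paper simply declares the proposition ``immediate'' from Lemma~\ref{lem:vert-refl}, and you have unpacked that claim by first using the verticality hypothesis and Corollary~\ref{cor:hyper-axis} to place $H\cap L$ at some $x_i$, then reading off from the proof of Lemma~\ref{lem:vert-refl} that $r$ is a factor of the bipartite involution $w_i$ and hence of a bipartite factorization of $w$. The only difference is level of detail; no new ideas are introduced.
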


For horizontal reflections, a more precise statement is necessary.

\begin{prop}[Horizontal reflections]\label{prop:hort-refl}
  Let $w$ be a Coxeter element in an irreducible euclidean Coxeter
  group $W$ that is not of type $A$.  If $r$ is a reflection that is
  horizontal with respect to the axis of $w$ then $r$ in $W$ is
  contained in a minimal length reflection factorization of $w$ and
  thus in $R_0$ iff the hyperplane of $r$ contains at least one axial
  vertex of $w$.
\end{prop}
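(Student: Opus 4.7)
The plan is to handle the two directions of the biconditional separately.

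For the sufficiency direction, assume $H_r$ contains an axial vertex $v$. I would first establish that every simple reflection of any axial simple system is vertical with respect to the axis of $w$: by Proposition~\ref{prop:sim-sys-iso} the product of the remaining simple reflections is elliptic, but if the deleted reflection $s$ were horizontal then left-multiplying $w$ by it would produce a hyperbolic element (per the vertical/horizontal classification quoted from \cite{BrMc-factor} in the section introduction), a contradiction. Now let $\sigma_i$ be any axial simplex with $v$ as a vertex; by Theorem~\ref{thm:pierced}, $\sigma_i$ produces $w$ as a bipartite Coxeter element. Let $s_v$ be the unique reflection of the simple system of $\sigma_i$ opposite to $v$; by the above $s_v$ is vertical, so Lemma~\ref{lem:vert-refl} applies and shows that $s_v w$ (or $w s_v$ if $v$ lies in the other bipartite face) is a Coxeter element of the finite Coxeter subgroup $W_v$ stabilizing $v$. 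Because $r \in R$ and $v \in H_r$, the reflection $r$ lies in $W_v$, and the classical fact that every reflection of a finite Coxeter group occurs in a minimal reflection factorization of its Coxeter element (see \cite{Be03}) provides a minimal factorization of $s_v w$ inside $W_v$ containing $r$. Prepending $s_v$ gives a minimal $R$-factorization of $w$ containing $r$, so $r \in R_0$.

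For the necessity direction, assume $r$ is horizontal and $r \in R_0$. Using Lemma~\ref{lem:rewriting} I would write $w = r \cdot r_2 \cdots r_{n+1}$; the element $rw = r_2 \cdots r_{n+1}$ is hyperbolic with $\dim \mov(rw) = n-2$. A direct calculation, using that the linear part of $r$ fixes the axis direction because $r$ is horizontal, shows that for any $y \in L$ the displacement $(rw)(y) - y$ is a constant vector with nontrivial component along the axis. Consequently the translation part of $rw$ is not contained in $\dir(\mov(w))$, so $rw$ cannot be expressed as a product of purely horizontal reflections; every minimal $R$-factorization of $rw$ must include at least one vertical reflection. Using transitivity of the Hurwitz action on minimal factorizations of $w$, I would rearrange to obtain a factorization $w = r \cdot s \cdot r_3 \cdots r_{n+1}$ with $s$ vertical. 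Lemma~\ref{lem:vert-refl} applied to $s$ locates an axial vertex $v$ on $H_s$, and the height-two bigon relation $rs = s'r$ forced by the adjacency of $r$ and $s$ in the factorization (see the discussion of Hurwitz relations preceding Proposition~\ref{prop:hurwitz}) shows that $H_r \cap H_s$ has codimension two. A geometric analysis using Corollary~\ref{cor:hyper-axis} then forces this codimension-two intersection to pass through the axial vertex $v$.

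The main obstacle is the last step of the necessity direction: verifying that the axial vertex of $H_s$ singled out by Lemma~\ref{lem:vert-refl} lies on $H_r$ itself, not merely elsewhere on $H_s$. A codimension count alone does not suffice; one must identify which axial vertex of $H_s$ is shared with $H_r$. I expect the argument to use essentially the horizontality of $r$ (so that $H_r$ is parallel to $L$ and cannot cross $L$ transversely as the hyperplanes of Corollary~\ref{cor:hyper-axis} do) together with the bipartite structure around $s$ to force the coincidence at an axial vertex, but the detailed verification will be the most delicate part of the proof.
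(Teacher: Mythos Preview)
Your sufficiency direction is essentially the paper's argument; the preliminary verification that every simple reflection of an axial chamber is vertical is a nice touch, though the paper sidesteps it by invoking Lemma~\ref{lem:vert-refl} directly for the reflection opposite $v$.

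Your necessity direction, however, has a genuine gap, and you have correctly identified it yourself. Placing $r$ first and then a vertical $s$ second gives you an axial vertex on $H_s$, but there is no clean reason that this vertex must also lie on $H_r$; the codimension-two intersection $H_r\cap H_s$ can easily miss every axial vertex, and neither Corollary~\ref{cor:hyper-axis} nor the Hurwitz height-two relation $rs=s'r$ forces the coincidence you need. The difficulty is structural: once $r$ is on the left, the elliptic element you can access is $s(rw)$ or similar, and its fix-set need not be related to $H_r$ in any useful way.

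The fix is a simple reordering that makes the whole problem disappear. Put $r$ \emph{last} rather than first, so that $w=r_0r_1\cdots r_{n-1}r$, and move a vertical reflection to the \emph{front} position $r_0$ (Lemma~\ref{lem:rewriting} lets you do this without disturbing $r=r_n$). Then $w'=r_0w=r_1\cdots r_{n-1}r$ is elliptic and, by Proposition~\ref{prop:vert-refl}, its fix-set is a single axial vertex $v$. Now $r$ is a reflection occurring in a minimal factorization of the elliptic element $w'$, so by \cite[Lemma~$\LemMinEll$]{BrMc-factor} the hyperplane $H_r$ must contain $\fix(w')=\{v\}$. That is the whole argument: no analysis of $\mov(rw)$, no codimension bookkeeping, no delicate geometric coincidence. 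The key idea you were missing is that the fix-set of an elliptic element is the intersection of the hyperplanes in any of its minimal reflection factorizations, so any reflection below it automatically passes through that fix-set.
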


\begin{proof}
  If $r$ is contained in $R_0$ then there is a factorization
  $r_0r_1\cdots r_n = w$ containing $r$ and by
  Lemma~\ref{lem:rewriting} we can assume that $r=r_n$.  Since every
  point under $w$ is moved in the vertical direction, at least one of
  the reflections in this factorizations must be vertical and by
  Lemma~\ref{lem:rewriting} we can move this reflection to the $r_0$
  position without altering $r=r_n$.  This shows that $r$ is a
  reflection below $w' = r_0 w = r_1r_2 \cdots r_n$ which is an
  elliptic isometry fixing an axial vertex $v$
  (Proposition~\ref{prop:vert-refl}).  But this implies that $\fix(r)
  = H$ must contain $\fix(w') = v$ since $\fix(w')$ is the
  intersection of the hyperplanes of this minimal length reflection
  factorization of $w'$ \cite[Lemma~$\LemMinEll$]{BrMc-factor}.  In
  particular, $H$ contains an axial vertex.

  In the other direction, let $H$ be the hyperplane of $r$, let $u$ be
  an axial vertex contained in $H$, and using the notation of
  Definition~\ref{def:axial-notation}, let $F_{i-1}$ be the face
  containing $u$.  If we let $r'$ be the reflection defined by the
  facet of $\sigma_{i-1}$ not containing $u$ then by
  Lemma~\ref{lem:vert-refl} $r'w$ is a Coxeter element of the finite
  Coxeter group that stabilizes $u$, a group that contains $r$.  Since
  it is well-known that every reflection in a finite Coxeter group
  occurs in some minimal length factorization of any of its Coxeter
  elements, $r$ occurs in such a factorization $r_1 r_2 \cdots r_n$ of
  $r'w$.  The product $r' r_1 r_2 \cdots r_n$ is then a minimal length
  reflection factorization of $w$ that contains $r$ and $r$ belongs to
  $R_0$.
\end{proof}

Propositions~\ref{prop:vert-refl} and~\ref{prop:hort-refl} immediately
establish the following.

\begin{thm}[Reflections]\label{thm:refl}
  Let $w$ be a Coxeter element for an irreducible euclidean Coxeter
  group $W$ that is not of type $A$ and let $R_0$ be the set of
  reflections below $w$.  A reflection $r$ is in $R_0$ iff the
  hyperplane $H = \fix(r)$ contains an axial vertex.
\end{thm}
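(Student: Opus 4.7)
The plan is to derive the theorem directly from the two preceding propositions together with Corollary~\ref{cor:hyper-axis}, by splitting on the dichotomy of Definition~\ref{def:v-h}. Every reflection $r$ in $W$ is either vertical or horizontal with respect to the axis of $w$, so it suffices to verify the biconditional on each side of this partition.

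First I would handle the horizontal case, which is essentially already stated. If $r$ is horizontal, then Proposition~\ref{prop:hort-refl} is exactly the assertion that $r \in R_0$ iff $\fix(r)$ contains an axial vertex, so there is nothing further to prove.

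Next I would handle the vertical case by showing that \emph{both} sides of the iff automatically hold, so that the biconditional is trivially true. On the one hand, Proposition~\ref{prop:vert-refl} gives that every vertical reflection lies in $R_0$. On the other hand, if $r$ is vertical then its root has a nontrivial component along the direction of the Coxeter axis $L$, which forces the hyperplane $H = \fix(r)$ to meet $L$ in a single point. By Corollary~\ref{cor:hyper-axis}, any hyperplane of a reflection crossing $L$ does so at one of the axial points $x_i$, so $H$ contains the axial vertex $x_i$. Thus for vertical $r$ both conditions in the theorem are satisfied, and the iff holds vacuously.

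Combining the two cases yields the theorem. The argument is short and I do not anticipate any genuine obstacle: all of the geometric work has already been done in Lemma~\ref{lem:vert-refl}, Corollary~\ref{cor:hyper-axis}, and Propositions~\ref{prop:vert-refl} and~\ref{prop:hort-refl}. The only care needed is to make explicit why a vertical reflection must cross $L$ (so that Corollary~\ref{cor:hyper-axis} applies) and to ensure that the ``axial vertex'' referenced in Corollary~\ref{cor:hyper-axis} matches the notion used in Proposition~\ref{prop:hort-refl}, which it does by Definition~\ref{def:axial-notation}.
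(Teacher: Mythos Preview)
Your decomposition into vertical and horizontal cases is exactly the paper's approach: the paper simply records that Propositions~\ref{prop:vert-refl} and~\ref{prop:hort-refl} immediately yield the theorem. However, your vertical case takes an unnecessary detour and contains a small slip. Proposition~\ref{prop:vert-refl} already asserts \emph{both} facts you need---that a vertical reflection lies in $R_0$ \emph{and} that its fixed hyperplane contains many axial vertices---so invoking Corollary~\ref{cor:hyper-axis} is redundant. Moreover, in that detour you write ``$H$ contains the axial vertex $x_i$,'' but the closest point $x_i$ is not a vertex of the Coxeter complex: by Lemma~\ref{lem:interior} it lies in the \emph{interior} of the face $F_i$, and ``axial vertex'' in Definition~\ref{def:axial-notation} means a vertex of an axial chamber. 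What Corollary~\ref{cor:hyper-axis} actually gives you is that $H$ contains all of $F_i$, whose vertices are axial vertices; so the conclusion is salvageable, but the cleaner route is simply to cite Proposition~\ref{prop:vert-refl} for both halves of the vertical case, as the paper does.
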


Although it requires a separate argument, we should note that
Theorem~\ref{thm:refl} also holds when $W$ is an irreducible euclidean
Coxeter group of type $\wt A_n$ and $w$ is any of its geometrically
distinct Coxeter elements.  Also, although every elliptic element in
the interval $[1,w]$ has as its fixed-set an affine subspace of the
Coxeter complex that contains at least one axial vertex, not all such
subspaces occur.  This should not be too surprising since this is
similar to the situation in finite Coxeter groups where only certain
``noncrossing'' subspaces occur as fixed sets of elements below the
Coxeter element.

Another important remark is that the length of an element with respect
to the full set of all reflections $R_E$ is, in general, quite
different from its length with respect to the set of reflections in
$W$.  See \cite{McCammondPetersen-refl-bound} where this is discussed
in detail.  This distinction does not play too large of a role in the
current context because the two length functions agree for a Coxeter
element $w$ and thus they agree for all of the elements in the
interval $[1,w]$.

This section concludes with a discussion of the dual presentation of
$\art(\wt G_2)$ that is designed to make Theorem~\ref{main:dual} more
comprehensive.  Craig Squier successfully analyzed its group structure
in \cite{Squier87} but he did so prior to the development of the
theory of Garside groups.  The following theorem shows that its dual
presentation is a Garside presentation in the sense of Digne.

\begin{thm}[Type $G$]\label{thm:g2}
  The interval used to define the unique dual presentation of the
  irreducible euclidean Artin group $\art(\wt G_2)$ is a lattice and
  thus its dual presentation is a Garside presentation.
\end{thm}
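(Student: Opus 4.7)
The plan is to exploit the very low dimension of the $\wt G_2$ setting ($n=2$, $\dim V = 2$) and obtain the lattice property for $[1,w]$ from a rank-counting argument inside the model poset.

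First I would note that, since the $\wt G_2$ diagram is a tree, Proposition~\ref{prop:geo-equiv} gives a unique Coxeter element $w$ up to geometric equivalence, so it suffices to verify the lattice property for a single interval $[1,w]$. By Proposition~\ref{prop:cox-elt-iso}, $w$ is hyperbolic with $\mov(w)$ a nonlinear affine hyperplane in $V$; since $\dim V = 2$ this means $\dim \mov(w) = 1$, and Theorem~\ref{thm:hyp-bowtie} then says that the model poset $P^{\mov(w)}$ is itself a lattice (its $\dir(\mov(w))$ is one-dimensional and so has no proper nontrivial subspace). Via Theorem~\ref{thm:model} together with the agreement of $\lr$ and $\lre$ on $[1,w]$ recorded at the end of Section~7, the interval $[1,w]$ in $\cay(W,R)$ embeds as a rank-preserving bounded graded sub-poset of the rank-$3$ lattice $P^{\mov(w)}$, sharing its top $h^{\mov(w)}$ and its bottom $e^E$.

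The key observation is that any bounded graded sub-poset $P$ of a bounded graded lattice $L$ of rank $3$ is automatically bowtie-free. Indeed, a hypothetical bowtie $(a,b:c,d)$ in $P$ must have $c,d$ at rank $1$ and $a,b$ at rank $2$, since the rank-$0$ and rank-$3$ levels of $P$ are singletons. In $L$, the join $j = c \vee d$ satisfies $j \leq a$ and $j \leq b$; since $j$ strictly dominates two distinct rank-$1$ elements we have $\mathrm{rank}(j) \geq 2$, and since $j \leq a$ at rank $2$ we have $\mathrm{rank}(j) \leq 2$. So $j$ sits at rank $2$, whence $j \leq a$ in the graded lattice $L$ forces $j = a$, and similarly $j = b$, contradicting $a \neq b$. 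Applying this with $P = [1,w]$ and $L = P^{\mov(w)}$ shows $[1,w]$ has no bowtie, so by the criterion in Definition~\ref{def:bowtie} it is a lattice. Proposition~\ref{prop:lattice-garside} then upgrades this to the Garside conclusion.

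There is no serious obstacle here, but it is worth emphasizing that the hypothesis ``rank equal to $3$'' is doing all the work: it forces the join of any pair of rank-$1$ elements of the ambient lattice to sit at rank $2$, which is what produces the collapse $j = a = b$. In the higher-rank cases treated in the sections on types $B$, $D$, $E$ and $F$, the join of two rank-$1$ elements of $P^{\mov(w)}$ can escape to the top of the ambient lattice, and this is exactly the room bowties need in order to appear inside $[1,w]$.
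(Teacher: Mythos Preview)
Your proof is correct and takes a genuinely different route from the paper's. The paper argues by explicit enumeration: it identifies the reflections in $R_0$ (all vertical reflections crossing the axis plus exactly two horizontal ones), lists the rank-$2$ elements of $[1,w]$ (two translations and the rotations about axial vertices), and then checks case by case that any pair of rank-$2$ elements has a well-defined meet. Your argument instead works entirely at the level of the ambient model poset $P^{\mov(w)}$: since $\dim V = 2$ forces $\dim\mov(w)=1$, Theorem~\ref{thm:hyp-bowtie} makes $P^{\mov(w)}$ a rank-$3$ lattice, and your lemma that a bounded graded rank-$3$ subposet of such a lattice (sharing top and bottom) can host no bowtie does the rest, with the rank-preservation coming from the agreement of $\lr$ and $\lre$ on $[1,w]$.

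What each approach buys: the paper's enumeration extracts concrete structural information about $[1,w]$ (exactly which translations and rotations appear, and how they meet), which is useful background for the later sections and for \cite{McSu-artin-euclid}. Your argument is shorter, avoids any case analysis, and is more portable: it shows without further work that \emph{any} subinterval $[1,w]$ of a euclidean Coxeter group whose Coxeter element has a $1$-dimensional move-set is a lattice, and it makes transparent that the phenomenon is purely a rank-$3$ accident rather than something special about the $G_2$ root system.
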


\begin{proof}
  Let $w$ be a Coxeter element of $W = \cox(\wt G_2)$.  Since it has a
  unique Coxeter element of $W$ up to geometric equivalence, its axis
  and axial simplices can be arranged as in Figure~\ref{fig:g2-axis}
  where the axis is dashed and the axial simplices are heavily shaded.
  By Theorem~\ref{thm:refl} the reflections that occur in minimal
  length factorizations of this glide reflection are the vertical
  reflections whose fixed lines cross the axis and the two horizontal
  reflections fixing one of the two vertical lines bounding the
  lightly shaded region in Figure~\ref{fig:g2-axis}.  (Multiplying $w$
  by a reflection $r$ fixing one of the other vertical lines results
  in a pure translation which has length $2$ with respect to $R_E$ but
  which does not have length $2$ with respect to $R$ since it
  translates in a direction that is not one of the root directions.)
  In this situation we can list the basic invariants of all of the
  isometries in the interval $[1,w]$.  The only hyperbolic isometries
  strictly below $w$ are the two translations that result when $w$ is
  multiplied by one of the two horizontal reflections.  Each
  translation is in a direction at a $30^\circ$ angle with the
  horizontal, one to left and one to the right.  The only length $2$
  factorizations of these translations are, of course, obtained by
  multiplying two parallel reflections whose root is in this
  direction.  In addition there is one elliptic isometry below $w$ for
  each axial vertex, one for each visible line in the Coxeter complex
  through an axial vertex and one for the entire plane.  With these
  descriptions it is relatively straightforward to check that the
  poset is a lattice.  Because the rank of the poset is so small any
  bowtie in the interval would be between two elements of rank $1$ and
  two elements of rank $2$.  Consider two elements at rank $2$,
  i.e. two elements that are either translations or rotations fixing a
  point.  The unique meet of the two translation strictly below $w$ is
  the identity since they have no common reflections in their possible
  factorizations.  The unique meet of a translation and a rotation is
  either the reflection through the line perpendicular to the
  translation direction that contains the fixed point of the rotation
  if such a line exists, or the identity otherwise.  And finally the
  meet of two rotations below $w$ is the reflection that fixes the
  line through their fixed points, if it exists in the Coxeter
  complex, or the identity otherwise. Finally, since well-defined
  meets always exists between any two elements of rank $2$ there are
  no bowties.  As remarked in Definition~\ref{def:bowtie} this means
  the interval $[1,w]$ is a lattice.
\end{proof}

%%%%%%%%%%%%%%%%%
\section{Bowties}
%%%%%%%%%%%%%%%%%

In this section we establish a criterion which implies that a Coxeter
interval in an irreducible euclidean Coxeter group is not a lattice.
This turns out to be the key result needed to establish
Theorem~\ref{main:dual}.  We begin with an elementary example in the
euclidean plane.

\begin{figure}
  \includegraphics{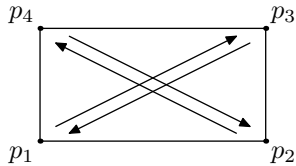}
  \caption{The rectangle with labeled corners used in
    Example~\ref{ex:rect}.\label{fig:rectangle}}
\end{figure}

\begin{exmp}[Reflections and translations]\label{ex:rect}
  Fix a rectangle in the euclidean plane with horizontal and vertical
  sides and label its corners as in Figure~\ref{fig:rectangle}.  Next,
  consider the group $G$ generated by the following eight elements:
  the four reflections that fix one of the four sides and the four
  translations that send a corner of this rectangle to the opposite
  corner.  The reflection fixing $p_i$ and $p_j$ is denoted $r_{ij}$
  and the translation sending $p_i$ to $p_j$ is $t_{ij}$.  Note that
  the subscripts of $r_{ij}$ are unordered and the subscripts of
  $t_{ij}$ are ordered.  In this notation $r_{12}$ and $r_{34}$ are
  reflections fixing horizontal lines, $r_{23}$ and $r_{41}$ are
  reflections fixing vertical lines and the four translations are
  $t_{13}$, $t_{31}$, $t_{24}$ and $t_{42}$.  The group $G$ contains
  the isometry $w$ that rotates the rectangle $180^\circ$ about its
  center.  For example, $w$ can be factored as $w = t_{13} r_{12}
  r_{41}$.  That this factorization is as short as possible over this
  generating set follows from the fact that every factorization must
  contain at least one horizontal reflection, one vertical reflection
  and one translation.  In fact, it is straightforward to check that
  there are exactly $24$ such factorizations of length~$3$.  The
  interval $[1,w]$ is shown in Figure~\ref{fig:rect-fact}.  The dashed
  lines represent translations and the solid lines represent
  reflections with the thickness distinguishing horizontal and
  vertical reflections.  Two final notes.  The interval $[1,w]$ is not
  a lattice because there is a bowtie connecting the two leftmost
  vertices in each of the two middle rows.  Concretely, the four
  factorizations of $w$ involved are $r_{34} t_{13} r_{41}$, $r_{34}
  t_{24} r_{23}$, $r_{12} t_{31} r_{23}$ and $r_{12} t_{42} r_{41}$.
  And finally, these $24$ factorizations of $w$ form a single closed
  orbit under the Hurwitz action.
\end{exmp}

\begin{figure}
  \includegraphics[scale=.8]{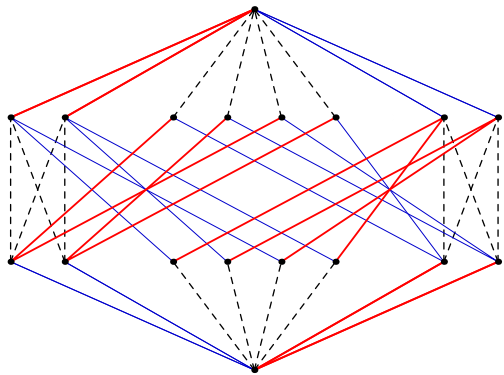}
  \caption{The poset of minimal factorizations described in
    Example~\ref{ex:rect}.\label{fig:rect-fact}}
\end{figure}

The $2$-dimensional configuration described in Example~\ref{ex:rect}
captures the essential reason why many of the intervals that define
dual euclidean Artin groups fail to be lattices.

\begin{prop}[Bowties]\label{prop:bowtie}
  Let $r$ and $r'$ be orthogonal reflections in an irreducible
  euclidean Coxeter group $W$ and let $t = t_\lambda$ be a translation
  in $W$ in a root direction.  If $t$ does not commute with either
  reflection and $\lambda$ is not in the plane spanned by the roots of
  $r$ and $r'$, then the interval $[1,w]$ below the element $w = t r
  r'$ contains a bowtie and is not a lattice.
\end{prop}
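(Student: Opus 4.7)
My plan is to transfer a bowtie from the model poset $P^{\mov(w)}$ of Theorem~\ref{thm:model} down to the interval $[1,w]_W$, using Theorem~\ref{thm:hyp-bowtie}. First I would compute $\mov(w)$: since $rr'$ is the $180^\circ$ rotation about $H_r\cap H_{r'}$ with $\mov(rr')=\Pi:=\spn(\alpha,\alpha')$, and left-translation by $t=t_\lambda$ shifts every move vector by $\lambda$, we get $\mov(w)=\Pi+\lambda$. This is a non-linear affine $2$-plane because $\lambda\notin\Pi$, so $w$ is hyperbolic of isometry-length $\lre(w)=4$ by Scherk's Theorem~\ref{thm:lr}. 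Writing $t=\rho_1\rho_2$ as a product of two parallel reflections in $W$, possible because $\lambda$ is a root-direction translation, the four-letter word $\rho_1\rho_2\,r\,r'$ realizes $w$ and shows $\lr(w)=\lre(w)=4$.

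Next, Theorem~\ref{thm:hyp-bowtie} applied to the proper non-trivial subspace $U=\spn(\alpha)\subset\Pi=\dir(\mov(w))$ produces a bowtie in $P^{\mov(w)}$ whose two elliptic elements have fix-sets of direction $\alpha^\perp$ and whose two hyperbolic elements have $1$-dimensional move-sets of direction $\alpha$ lying inside $\mov(w)$. I would realize this bowtie inside $W$ by the four isometries $r$, $s:=trt^{-1}$, $r'w$, and $s'w$, where $s':=tr't^{-1}$. The reflections $r$ and $s$ are parallel and distinct because $\langle\lambda,\alpha\rangle\neq0$, which is exactly the hypothesis that $t$ does not commute with $r$. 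Pushing $t$ across a linear reflection via the identity $g\,t_\mu=t_{g(\mu)}\,g$ gives the computations $r'w=t_{r'(\lambda)}\,r$ and $s'w=t\,r$, each a glide reflection whose $1$-dimensional move-set is a line of direction $\alpha$ lying in $\Pi+\lambda$; the two lines differ by $r'(\lambda)-\lambda=-2\langle\lambda,\hat\alpha'\rangle\hat\alpha'$, which is nonzero since $t$ does not commute with $r'$.

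To descend the bowtie I would check that the four isometries lie in $[1,w]_W$ by exhibiting explicit length-$4$ factorizations of $w$ that display each as the left or right portion of a minimal chain. The identities $w=\rho_1\rho_2\,r\,r'$, $w=s\,t\,r'$, $w=r'\cdot t_{r'(\lambda)}\cdot r$, and $w=s'\cdot t\cdot r$, all obtained from the base factorization by Hurwitz-type rewriting, show $r,s\in[1,w]_W$ at rank $1$ and $r'w,s'w\in[1,w]_W$ at rank $3$. Because $\lr(w)=\lre(w)$, Theorem~\ref{thm:model} identifies $[1,w]_{R_E}$ with $P^{\mov(w)}$ and makes $[1,w]_W$ an order-preserving subposet. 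The invariant map sends the four elements $r$, $s$, $r'w$, $s'w$ to the bowtie identified in the previous paragraph, and the subposet observation inside Definition~\ref{def:bowtie} then transports the bowtie into $[1,w]_W$, forcing this interval to fail the lattice condition.

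The main obstacle is verifying that $\lr$ agrees with $\lre$ at all elements involved, so that the model-poset bowtie really does sit inside $[1,w]_W$. This is where the hypothesis that $\lambda$ points in a root direction earns its keep: it guarantees that $t$ together with each of its orthogonal-reflection conjugates $t_{r(\lambda)}$, $t_{r'(\lambda)}$, and $t_{rr'(\lambda)}$ is a root-direction translation in $W$ and therefore admits a length-$2$ reflection factorization in $R$. Without that, the displayed factorizations of $w$ might be minimal only in $R_E$ and not in $R$, in which case the rank assignments of $r'w$ and $s'w$ inside $[1,w]_W$ would collapse and the identification with a subposet of the model poset would break down.
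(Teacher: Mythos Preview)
Your proof is correct and follows essentially the same strategy as the paper: compute $\mov(w)=\Pi+\lambda$ to see that $w$ is hyperbolic with $\lr(w)=\lre(w)=4$, realize a bowtie from Theorem~\ref{thm:hyp-bowtie} inside $P^{\mov(w)}$ by four explicit elements of $W$, and then use the subposet observation in Definition~\ref{def:bowtie} to transport it into $[1,w]_W$. The only cosmetic difference is that the paper routes the construction through the rectangle Example~\ref{ex:rect} and the Hurwitz orbit to embed all of Figure~\ref{fig:rect-fact}, while you name the four bowtie elements $r$, $trt^{-1}$, $r'w$, $s'w$ directly and verify the needed rank and order relations by hand---the closing appeal to $P^{\mov(w)}$ is identical.
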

 
\begin{proof}
  The first step is to show that the interval $[1,w]$ contains a copy
  of Figure~\ref{fig:rect-fact} as an induced subposet.  Because
  $t_\lambda$ is a translation in a root direction it is a product of
  two parallel reflections in $W$.  This means that $w$ has reflection
  length at most $4$.  Next, note that $rr'$ is an elliptic element
  with a $2$-dimensional move-set $U$ spanned by the roots of $r$ and
  $r'$.  The hypothesis that $\lambda$ does not lie in $U$ means that
  the move-set of $t r r'$ is not through the origin and thus is a
  nonlinear $2$-dimensional affine subspace.  Thus $w$ is a hyperbolic
  isometry with reflection length at least $4$.  Combining these facts
  shows that $\lr(w)= 4$ and, after factoring $t$ into two parallel
  reflections in $W$, we have one of its minimal length factorizations
  and a corresponding maximal chain in $[1,w]$.

  Next, let $B$ be any $2$-dimensional affine subset of the euclidean
  space $E$ with $\dir(B) = U$ and note that the hyperplanes fixed by
  the reflections $r$ and $r'$ intersect $B$ in orthogonal lines
  $\ell$ and $\ell'$ and that both $r$ and $r'$ stabilize $B$.  If we
  uniquely decompose $\lambda$ as a vector $\lambda_1$ in $U$ plus a
  vector $\lambda_2$ in $U^\perp$ then we can factor $t$ into a pair
  of translations $t_{\lambda_1}$ and $t_{\lambda_2}$, one with a
  translation direction in $U$ and one with a translation direction in
  $U^\perp$.  The translation $t_{\lambda_2}$ commutes with $r$, $r'$
  and $t_{\lambda_1}$ and the hypotheses on $t$ ensure that
  $t_{\lambda_1}$ is nontrivial and not a direction vector of $\ell$
  or $\ell'$.  In particular, the way $r$, $r'$ and $t_{\lambda_1}$
  act on $U$ can be esssentially identified with the action of
  $r_{12}$, $r_{41}$ and $t_{13}$ of Example~\ref{ex:rect}.

  As we use the Hurwitz orbit in the example to alter the
  factorization there, we can mimic that action on the minimal length
  reflection factorizations of $w$, treating the translation as a
  product of two parallel reflections that always stay together and
  where both get conjugated simultaneously when necessary.  Under this
  action, the translation $t_{\lambda_2}$ simply follows the
  conjugates of the translation $t_{\lambda_1}$ around by virtue of
  the fact that it commutes with all three actions on $B$.  The result
  is a copy of Figure~\ref{fig:rect-fact} as an induced subposet of
  $[1,w]$.  The final step is to note that the bowtie visible on the
  left of Figure~\ref{fig:rect-fact} remains a bowtie in the larger
  poset $[1,w]$ because $[1,w]$ is, in turn, an induced subposet of
  the hyperbolic poset $P^{\mov(w)}$ where these same four elements,
  two of rank $1$ and two of rank $3$ are four elements forming one of
  the known bowties in $P^{\mov(w)}$ as described in
  Theorem~\ref{thm:hyp-bowtie}.
\end{proof} 

Translations and reflections satisfying the hypotheses of
Proposition~\ref{prop:bowtie} can be found below a Coxeter element in
an irreducible euclidean Coxeter group whenever the roots orthogonal
to the direction of its Coxeter axis form a reducible root system.

\begin{thm}[Reducibility and bowties]\label{thm:bowtie}
  Let $L$ be the axis of a Coxeter element $w$ in an irreducible
  euclidean Coxeter group $W = \cox(\Gamma)$.  If the root system
  $\Phi_\Gamma \cap \dir(L)^\perp$ of horizontal roots is reducible,
  then the interval $[1,w]$ contains a bowtie, it is not a lattice and
  the dual Artin group $\dart(\Gamma,w)$ is not a Garside group.
\end{thm}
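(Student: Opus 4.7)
The plan is to apply Proposition~\ref{prop:bowtie} to a carefully chosen triple $(t,r,r')$ built from the horizontal decomposition, and then verify that the resulting hyperbolic element $trr'$ lies in $[1,w]$ so that the bowtie produced in $[1,trr']$ transfers into $[1,w]$. Once a bowtie is exhibited, $[1,w]$ fails to be a lattice by the criterion in Definition~\ref{def:bowtie}, and $\dart(\Gamma,w)$ fails to be a Garside group by the contrapositive of Proposition~\ref{prop:lattice-garside}.

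Since the horizontal root system is reducible, write $\Phi_\Gamma \cap \dir(L)^\perp = \Phi_1 \sqcup \Phi_2$ with $V_i = \spn(\Phi_i)$ orthogonal, both contained in the horizontal space $H = \dir(L)^\perp$. I would then locate a vertical root $\gamma \in \Phi_\Gamma \setminus H$ whose orthogonal projections onto both $V_1$ and $V_2$ are nonzero. Such a $\gamma$ should be forced by the irreducibility of $\Phi_\Gamma$: if every vertical root projected trivially onto $V_1$ or onto $V_2$, a careful partition of vertical roots would exhibit a nontrivial orthogonal decomposition of $\Phi_\Gamma$, contradicting irreducibility. Given $\gamma$, select $\alpha \in \Phi_1$ and $\beta \in \Phi_2$ with $\langle \gamma, \alpha \rangle \neq 0 \neq \langle \gamma, \beta \rangle$ (possible because $\Phi_i$ spans $V_i$), set $\lambda = \gamma^\vee$ and $t = t_\lambda \in W$, and position the affine reflections $r = r_{\alpha, i_0}$ and $r' = r_{\beta, j_0}$ so that their hyperplanes pass through axial vertices of $w$; by Theorem~\ref{thm:refl} this places $r, r' \in R_0$. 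The hypotheses of Proposition~\ref{prop:bowtie} now hold: $\alpha \perp \beta$ gives $r \perp r'$; $\langle \lambda, \alpha \rangle$ and $\langle \lambda, \beta \rangle$ are nonzero multiples of the corresponding inner products with $\gamma$, so $t$ commutes with neither reflection; and $\lambda$ has a nontrivial $\dir(L)$-component inherited from $\gamma$, so $\lambda \notin \spn(\alpha, \beta) \subseteq H$.

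Proposition~\ref{prop:bowtie} thus produces a bowtie in $[1, trr']$, and it remains to transfer it into $[1, w]$. By Theorem~\ref{thm:model} this reduces to verifying $\mov(trr') \subseteq \mov(w)$: the direction space $\spn(\alpha, \beta)$ of $\mov(trr')$ sits inside $\dir(L)^\perp = \dir(\mov(w))$, and the affine offset of $\mov(trr')$ along $\dir(L)$ can be matched to that of $\mov(w)$ by choosing the integer indices $i_0, j_0$ so as to translate the reflection hyperplanes appropriately. The four elements of the bowtie (two horizontal reflections at rank $1$ and two rank-$3$ hyperbolic isometries of the form identified by Theorem~\ref{thm:hyp-bowtie}) then retain their mutual comparabilities in the larger poset $[1, w]$. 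The main obstacle is the existence of the vertical root $\gamma$ with simultaneous nonzero $V_1$- and $V_2$-projections: vertical roots can interact with one another through their shared $\dir(L)$-direction, so the orthogonal decomposition of $\Phi_\Gamma$ one wishes to extract from a contradiction must be constructed with care, and may well require an auxiliary argument or a type-by-type check grounded in the explicit axis direction for each of the irreducible euclidean Coxeter groups in the scope of the theorem.
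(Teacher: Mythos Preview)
Your overall strategy---find $t$, $r$, $r'$ satisfying the hypotheses of Proposition~\ref{prop:bowtie}, then embed the resulting bowtie into $[1,w]$---matches the paper's, but the paper carries it out in a way that avoids precisely the step where your argument breaks down.

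The paper does not choose $t$, $r$, $r'$ independently and then try to verify $trr' \leq w$ afterwards. Instead it starts from a minimum-length factorization $w = r_0 r_1 \cdots r_n$, arranges (via the Hurwitz action and Lemma~\ref{lem:rewriting}) that $r_0 r_1$ is a translation $t$ in the highest-root direction, and then observes that the roots of $r_2,\ldots,r_n$ are horizontal and form a basis of $\dir(L)^\perp$. Reducibility plus irreducibility of $\Phi_\Gamma$ then forces two of these reflections---say $r_2$ and $r_3$ after reordering---to have roots in distinct components and to fail to commute with $t$. Because $r_0,r_1,r_2,r_3$ are the first four letters of a minimum-length $R$-factorization of $w$, one has $trr' = r_0r_1r_2r_3 \leq w$ in the $R$-order for free, and the interval $[1,trr']$ is literally a lower subinterval of $[1,w]$.

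Your transfer step, by contrast, has a genuine gap. The claim that the $\dir(L)$-offset of $\mov(trr')$ can be matched to that of $\mov(w)$ by choosing the integers $i_0$, $j_0$ is false: since $\alpha$ and $\beta$ are horizontal, the move-set of $r_{\alpha,i_0} r_{\beta,j_0}$ is the linear span $\spn(\alpha,\beta)$ regardless of $i_0$, $j_0$, and hence $\mov(trr') = \spn(\alpha,\beta) + \gamma^\vee$ is entirely determined by $\gamma$. You would need the $\dir(L)$-component of $\gamma^\vee$ to equal the specific translation amount $\mu$ of $w$ along its axis, an extra constraint you have not arranged. Moreover, even if you did obtain $\mov(trr') \subseteq \mov(w)$, Theorem~\ref{thm:model} only gives $trr' \leq w$ in the $R_E$-order; the inclusion $[1,w]_R \subseteq [1,w]_{R_E}$ goes the wrong way for concluding $trr' \in [1,w]_R$. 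The paper's approach of extracting $t,r,r'$ from an existing $R$-factorization of $w$ is exactly what makes both of these issues disappear, and it simultaneously disposes of the existence question for $\gamma$ that you flagged.
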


\begin{proof}
  Let $\sigma$ be a chamber where the reflections defined by its
  facets can be multiplied in an appropriate order to produce $w = r_0
  r_1 \cdots r_n$.  By repeatedly applying Lemma~\ref{lem:sink-source}
  and replacing $\sigma$ with another chamber if necessary, we may
  assume without loss of generality that the reflection corresponding
  to the white dot is the reflection $r_0$.  When this is the case,
  the remaining reflections in the factorization fix a vertex $x$ of
  $\sigma$ with the property that every hyperplane of a reflection in
  $W$ is parallel to a hyperplane of a reflection in $W$ fixing $x$.
  In other words, we can identify $x$ as our origin as in
  Definition~\ref{def:root-cox}, the group generated by $r_1$ through
  $r_n$ is the group $W_0$, and the product $r_1 r_2 \cdots r_n$ is a
  Coxeter element for $W_0$.  Because every reflection through $x$
  occurs in some minimal length factorization of this Coxeter element,
  we may modify the product $r_1r_2 \cdots r_n$ so that $r_1$ is a
  reflection with a hyperplane parallel to the hyperplane of $r_0$.
  As a consequence $r_0r_1$ is a translation in a root direction and
  concretely a translation in the direction $\lambda$ that is the root
  of highest weight relative to this simple system.

  Next, consider the element $r_2 r_3 \cdots r_n$.  This is an
  elliptic element fixing a line $L'$ and since it differs from $w$ by
  a translation, $L'$ is parallel to $L$, the Coxeter axis of $w$.
  Moreover, $L'$ must lie in the hyperplane fixed by $r_i$ for each
  $i\geq 2$, so the roots of these reflections belong to the root
  system $\Phi_{\textrm{hor}} = \Phi_\Gamma \cap \dir(L)^\perp$.  We
  should note that the common root $\lambda$ of $r_0$ and $r_1$ is not
  in this root system because they are now the only roots capable of
  moving points in a direction that includes motion in the direction
  of $L$.

  By hypothesis this system of horizontal roots is reducible, say
  $\Phi_{\textrm{hor}} = \Phi_1 \cup \cdots \cup \Phi_j$ for some $j
  >1$ with each $\Phi_i$ an irreducible root system that spans a
  subspace $V_i$ and $V_1 \oplus \cdots \oplus V_j$ is an orthogonal
  decomposition of $\dir(L)^\perp$.  Because the reflections $r_i$, $2
  \leq i\leq n$ form a minimal length factorization of an elliptic
  isometry, their roots are linearly independent
  \cite[Lemma~$\LemMinEll$]{BrMc-factor}.  Thus the number of roots in
  each $\Phi_i$ is bounded above by the dimension of the corresponding
  $V_i$.  Moreover, since the number of reflections equals the
  dimension of $\dir(L)^\perp$, the roots of these reflections that
  lie in each $\Phi_i$ form a basis for $V_i$.

  Finally, note that because the reflections $r_0, r_1, \ldots, r_n$
  generate all of $W$, their roots must generate the entire
  irreducible root system $\Phi_\Gamma$.  In partiuclar, the vector
  $\lambda$ is not in $V_i^\perp$ for any $i$ since this would lead to
  an obvious decomposition.  More to the point, there must be an $r_i$
  with a root in $\Phi_1$ is not orthogonal to $\lambda$ since these
  roots form a basis of $V_1$, and the same is true for the other
  components as well.  In particular, we can select two reflections
  from our given factorization of $w$ with roots from distinct
  irreducible components of the horizontal root system $\Phi_\Gamma
  \cap \dir(L)^\perp$ and that neither one commutes with the
  translation $t = r_0r_1$ and by Lemma~\ref{lem:rewriting} we may
  assume that they are $r_2$ and $r_3$.  At this point, we can apply
  Proposition~\ref{prop:bowtie} with $t=r_0 r_1$, $r=r_2$ and $r' =
  r_3$ to conclude that the interval from $1$ to $r_0r_1r_2r_3$
  contains a bowtie $(a,b:c,d)$.  But this interval is contained
  inside $[1,w]$ and the additional elements cannot resolve the bowtie
  since any element below $a$ and $b$ was already contained in
  $[1,r_0r_1r_2r_3]$.  Thus $[1,w]$ contains a bowtie and is not a
  lattice.
\end{proof}

Consider the case of the $\wt G_2$ Coxeter group.  By comparing
Figures~\ref{fig:g2-axis} and~\ref{fig:phi-g2} it is clear that there
are exactly $2$ horizontal roots forming a $\Phi_{A_1}$ root system.
Since this is irreducible, Theorem~\ref{thm:bowtie} does not apply.
This is consistent with Theorem~\ref{thm:g2} where we showed that the
interval used to define the dual presentation of $\art(\wt G_2)$ is a
lattice and thus the dual presentation is a Garside presentation.

%%%%%%%%%%%%%%%%%%%%%%%%%%%%%%%%%%%%%%%%%%%%%%%%%%%%%%%%%
\section{Computations and remarks}\label{sec:computation}
%%%%%%%%%%%%%%%%%%%%%%%%%%%%%%%%%%%%%%%%%%%%%%%%%%%%%%%%%

In this final section we use Theorem~\ref{thm:bowtie} to complete the
proof of our main theorem, Theorem~\ref{main:dual}.  At this point it
is relatively straightforward.  For each type and representative
Coxeter element we compute the direction of the Coxeter axis using
Remark~\ref{rem:axis-dir} and then we compute the system of horizontal
roots.  In each case not covered by \cite{Digne06}, \cite{Digne12}, or
Theorem~\ref{thm:g2}, the system of horizontal roots is reducible.  We
begin by introducing the relevant root systems using a slightly
idiosyncratic notation that John Crisp and I have found to be quite
useful when performing explicit root computations by hand.

\begin{defn}[Root notation]\label{def:root-not}
  In almost every standard root system for a euclidean Coxeter group,
  a root is completely specified by indicating the location and the
  sign of its nonzero entries.  This is because all nonzero entries
  has the same absolute value and this common value only depends on
  the number of nonzero entries.  We list the locations of the nonzero
  entries in the subscript together with a slash ``/''.  The locations
  of the positive entries occur before the slash and the locations of
  the negative entries occur afterwards.  For example, $r_{ij/}$,
  $r_{i/j}$ and $r_{/ij}$ denote the vectors $e_i+e_j$, $e_i - e_j$
  and $-e_i - e_j$, respectively and in the $E_8$ root system the
  vector $\frac12 (1,-1,1,-1,1,1,-1,-1)$ is written $r_{1356/2478}$.
\end{defn}

\begin{defn}[Root systems]\label{def:root-sys}
  Let $\Phi_k^{(n)}$ be the collection of $2^k \binom{n}{k}$ vectors
  of the form: \[\Phi_k^{(n)} = \{\pm e_{i_1} \pm e_{i_2} \pm \cdots
  \pm e_{i_k} \mid 1 \leq i_1 < i_2 < \cdots i_k \leq n \}\] and let
  $\Phi_{k,\textrm{even}}^{(n)}$ be the subset of these vectors with
  an even number of minus signs.  The standard root systems of types
  $B_n$, $C_n$, $D_n$, $F_4$ and $E_8$ are very easy to describe using
  this notation.  See Table~\ref{tbl:easy-roots}.  The others involve
  slight modifications.  The roots in $\Phi_{C_n}$ that are orthogonal
  to the vector $(1^n)$, i.e. the vector with all $n$ coordinates
  equal to $1$, form the standard $A_{n-1}$ root system.  The roots in
  $\Phi_{E_8}$ orthogonal to $r_{7/8}$, i.e. the roots with $x_7 =
  x_8$, form the standard $E_7$ root system.  And the roots in
  $\Phi_{E_8}$ orthogonal to $r_{6/7}$ and $r_{7/8}$, i.e. the roots
  with $x_6 = x_7 = x_8$, form the standard $E_6$ root system.
\end{defn}

\begin{table}
  \begin{tabular}{|c|l|} \hline
    Type & Roots\\ \hline \hline
    $B_n$ & $\Phi_2^{(n)} \cup \Phi_1^{(n)}$\\ \hline
    $C_n$ & $\Phi_2^{(n)} \cup 2\Phi_1^{(n)}$\\ \hline
    $D_n$ & $\Phi_2^{(n)}$\\ \hline
    $F_4$ & $\Phi_2^{(4)} \cup 2\Phi_1^{(4)} \cup \Phi_4^{(4)}$\\ \hline
    $E_8$ & $\Phi_2^{(8)} \cup \frac12 \Phi_{8,\textrm{even}}^{(8)}$\\ \hline
  \end{tabular} \vspace*{1em}
  \caption{Roots systems with simple descriptions.\label{tbl:easy-roots}}
\end{table}

We begin with the four classical families.  The computations for type
$C$ are done in greater detail because they are straightforward and
indicate the kinds of computations that are merely sketched for the
remaining types.

\begin{exmp}[Type $C$]
  We begin by selecting vectors from the $\Phi_{C_n}$ root system
  whose nonpositive dot products are encoded in the $\wt C_n$ diagram
  of Figure~\ref{fig:dynkin}.  From left to right we use $2e_1$,
  $-e_1-e_2$, $e_2+e_3$, \ldots $(-1)^{n-1}(e_{n-1}+e_n)$ and $(-1)^n
  2e_n$.  In the notation of Definition~\ref{def:root-not} these are
  the vectors $r_{1/}$, $(-1)^i r_{i(i+1)/}$ and $(-1)^n r_{n/}$.  Our
  choice of vectors is nonstandard, but it has the advantage of
  producing a bipartite Coxeter element whose axis is in a direction
  that makes computing and identifying the horizontal root system
  trivial.  We compute the direction of the Coxeter axis following the
  procedure outlined in Remark~\ref{rem:axis-dir}.  The unique linear
  dependency among these vectors involves adding the first and last
  vectors to two times each of the remaining vectors.  The bipartite
  structure separates them based on parity in the list and the sum of
  the odd terms is the vector $(2,2,\ldots, 2)$, or $(2^n)$ in
  Conway's shorthand notation.  The roots orthogonal to this direction
  are those of the form $r_{i/j} = e_i - e_j$ and these form the
  irreducile root system $\Phi_{A_{n-1}}$.  Note that
  Theorem~\ref{thm:bowtie} is not applicable and that this is
  consistent with the results in \cite{Digne12} where Digne
  established that the interval that defines the dual euclidean Artin
  group of type $\widetilde C_n$ is, in fact, a lattice and the dual
  presentation of $\art(\wt C_n)$ is a Garside presentation.
\end{exmp}

Type $B$ is very similar but has a reducible horizontal root system.

\begin{exmp}[Type $B$]
  Next consider the root system of type $B_n$ and select the vectors
  $e_1$, $-e_1-e_2$, $e_2+e_3$, \ldots, $(-1)^{n-2}(e_{n-2}+
  e_{n-1})$, $(-1)^{n-1}(e_{n-1}+ e_n)$ and $(-1)^{n-1}(e_{n-1} -
  e_n)$ to represent the vertices of the $\wt B_n$ diagram of
  Figure~\ref{fig:dynkin} from left to right.  In shorthand notation,
  these are the vectors $r_{1/}$, $r_{/12}$, $r_{23/}$, \ldots
  $(-1)^{n-2} r_{(n-2)(n-1)/}$, $(-1)^{n-1} r_{(n-1)n/}$ and
  $(-1)^{n-1} r_{(n-1)/n}$, keeping in mind that $r_{1/}$ denotes the
  vector $e_1$ in the $B_n$ root system and not $2e_1$ as in the $C_n$
  root system.  The unique linear dependency among these vectors is
  obtained by adding the two final vectors to two times each of the
  remaining vectors.  In $\wt B_4$, for example, the linear dependency
  is $2 r_{1/} + 2 r_{/12} + 2 r_{23/} + r_{/34} + r_{4/3} =
  (0,0,0,0)$.  Separating the terms based on the bipartite structure
  shows that the direction of the Coxeter axis is $(2,2,\ldots,2,0) =
  (2^{n-1} 0)$.  The horizontal roots are those of the form $r_{i/j} =
  e_i -e_j$ with $i,j < n$ and the two roots $\pm r_{n/} = \pm e_n$.
  This is clearly a reducible horizontal root system with the
  components isomorphic to $\Phi_{A_{n-2}}$ and $\Phi_{A_1}$.  In
  particular, Theorem~\ref{thm:bowtie} applies, the interval used to
  define the dual euclidean Artin group of type $\wt B_n$ is not a
  lattice and the dual presentation of $\art(\wt B_n)$ is not a
  Garside presentation.
\end{exmp}

Type $D$ is another slight variation.

\begin{exmp}[Type $D$]
  Consider the root system of type $D_n$ and select the vectors
  $e_1-e_2$, $-e_1-e_2$, $e_2+e_3$, \ldots, $(-1)^{n-2}(e_{n-2}+
  e_{n-1})$, $(-1)^{n-1}(e_{n-1}+ e_n)$ and $(-1)^{n-1}(e_{n-1} -
  e_n)$ to represent the vertices of the $\wt D_n$ diagram of
  Figure~\ref{fig:dynkin} from left to right.  The unique linear
  dependency among these vectors is obtained by adding the first two
  vectors, the last two vectors, and two times each of the remaining
  vectors.  In $\wt D_5$, for example, the linear dependency is
  $r_{1/2} + r_{/12} + 2 r_{23/} + 2 r_{/34} + r_{45/} + r_{4/5} =
  (0,0,0,0,0)$.  Separating the terms based on the bipartite structure
  shows that the direction of the Coxeter axis is $(0,2,2,\ldots,2,0)
  = (0 2^{n-2} 0)$.  The horizontal roots are those of the form
  $r_{i/j} = e_i -e_j$ with $1< i,j < n$ and the four roots $\pm
  r_{1n/}$ and $\pm r_{1/n}$ This is a reducible horizontal root
  system with three irreducible factors (since $r_{1n/}$ and $r_{1/n}$
  are orthogonal) that are isomorphic to $\Phi_{A_{n-3}}$,
  $\Phi_{A_1}$ and $\Phi_{A_1}$.  In particular,
  Theorem~\ref{thm:bowtie} applies, the interval used to define the
  dual euclidean Artin group of type $\wt D_n$ is not a lattice and
  the dual presentation of $\art(\wt D_n)$ is not a Garside
  presentation.
\end{exmp}

The final classical family is type $A$.

\begin{exmp}[Type $A$]
  Let $W$ be the Coxeter group $\cox(\wt A_n)$.  For each $(p,q)$ with
  $p+q = n+1$ and $p \geq q \geq 1$ we can construct a $(p,q)$-bigon
  Coxeter element $w$ as follows.  First let $W$ act on $\R^{n+1}$ in
  the natural way, permuting coordinates and translating along vectors
  orthogonal to $(1^{n+1})$.  Next label these $n+1$ coordinates
  $(x_1,x_2 \ldots, x_p, y_1, y_2, \ldots, y_q)$.  Let the unique
  source in the acyclic orientation be the reflection that swaps
  coordinates $x_1$ and $y_1$, let the vertices along one side of the
  bigon represent reflections that swap $x_i$ and $x_{i+1}$ in
  ascending order and let the vertices along the other side represent
  reflections that swap $y_i$ and $y_{i+1}$ in ascending order.
  Finally, let the unique sink represent the reflection that sends the
  coordinates $(x_p,y_q)$ to $(y_q-1,x_p+1)$.  This is a reflection
  fixing the hyperplane $y_q = x_p+1$.  The product of these
  reflections in this order is an isometry of $\R^{n+1}$ that sends
  $(x_1,x_2,\ldots, x_p, y_1, y_2, \ldots, y_q)$ to $(x_p + 1, x_1,
  x_2, \ldots, x_{p-1}, y_q-1, y_1, y_2, \ldots, y_{q-1})$.  Although
  we cannot use the method of Remark~\ref{rem:axis-dir} to calculate
  the direction of the Coxeter axis, it is easy enough to compute that
  its $pq$-th power of this motion is a pure translation in the
  direction $(p^q,-q^p)$, i.e. the vector with its first $q$
  coordinates equal to $p$ and its next $p$ coordinates equal to $-q$.
  Thus this is the direction of the axis of this Coxeter element.

  As a consequence, the horizontal roots are those of the form
  $r_{i/j} = e_i -e_j$ with $i,j \leq p$ or with $i,j > p$.  So long
  as $q$ (and therefore $p$) is at least $2$, there is at least one
  horizontal root of each type and the horizontal root system is
  reducible with one component isomorphic to a $\Phi_{A_{p-1}}$ and
  the other component isomorphic to a $\Phi_{A_{q-1}}$.  In
  particular, Theorem~\ref{thm:bowtie} applies to any $(p,q)$-bigon
  Coxeter element with $p\geq q \geq 2$, the interval used to define
  the dual euclidean Artin group of type $\wt A_n$ with respect to
  this Coxeter element is not a lattice and the corresponding dual
  presentation of $\art(\wt A_n)$ is not a Garside presentation.  In
  the remaining case where $q=1$ and $p=n$, the horizontal roots for
  an irreducible $\Phi_{A_{n-1}}$ root system and
  Theorem~\ref{thm:bowtie} is not applicable.  This is consistent with
  the results in \cite{Digne06} where Digne established that the
  interval that defines the dual euclidean Artin group of type
  $\widetilde A_n$ with respect to an $(n,1)$-bigon Coxeter element
  is, in fact, a lattice and the corresponding dual presentation is a
  Garside presentation.
\end{exmp}

And finally we shift our attention to the exceptional types.  Since
type $G$ is covered by Theorem~\ref{thm:g2}, we only need to discuss
the four remaining examples.  In each case we list the vectors chosen
to as our simple system, the resulting direction of the Coxeter axis
and the vectors of the horizontal root system grouped into irreducible
components.  These computations were initially carried out by hand and
then a few lines of code in \texttt{GAP} were used to doublecheck and
validate these results.

\begin{exmp}[Type $F$]
  Consider the root system of type $F_4$.  If we select the vectors
  $r_{1/2}$, $r_{23/}$, $r_{/1234}$, $r_{4/}$, and $r_{12/34}$ from
  the $\Phi_{F_4}$ root system as the vectors represented in the
  extended Dynkin diagram of type $\wt F_4$, then the direction of the
  axis of the corresponding bipartite Coxeter element is $(0,1,1,2)$.
  There are $8$ roots that are horizontal with respect to this axis
  and they split into two irreducible factors.  There is a
  $\Phi_{A_1}$ root system formed by the roots $\pm \{ r_{2/3} \}$ and
  a $\Phi_{A_2}$ root system formed by the roots $\pm \{ r_1,
  r_{123/4} , r_{23/14} \}$.  As a consequence
  Theorem~\ref{thm:bowtie} applies, the interval used to define the
  dual euclidean Artin group of type $\wt F_4$ is not a lattice and
  the dual presentation of $\art(\wt F_4)$ is not a Garside
  presentation.
\end{exmp}

\begin{exmp}[Type $E$]
  Consider the root system of type $E_6$.  If we select the vectors
  $r_{12/}$, $r_{5/2}$, $r_{/45}$, $r_{4/3}$, $r_{235678/14}$,
  $r_{2345/1678}$ and $r_{134678/25}$ from the $\Phi_{E_6}$ root
  system as the vectors represented in the extended Dynkin diagram of
  type $\wt E_6$, then the direction of the axis of the corresponding
  bipartite Coxeter element is $(1,1,1,-3,-3,1,1,1)$.  There are $14$
  roots that are horizontal with respect to this axis and they split
  into three irreducible factors.  There is a $\Phi_{A_2}$ root system
  formed by the roots $\pm \{ r_{1/2}, r_{2/3}, r_{1/3} \}$, another
  $\Phi_{A_2}$ root system formed by the roots $\pm \{ r_{4/5},
  r_{1234/5678}, r_{1235/4678} \}$, and a $\Phi_{A_1}$ root system
  formed by the roots $\pm \{ r_{12345678/} \}$.

  Next consider the root system of type $E_7$.  If we select the
  vectors $r_{/15}$, $r_{12/}$, $r_{/27}$, $r_{78/}$, $r_{/38}$,
  $r_{34/}$, $r_{/46}$ and $r_{2356/1478}$ from the $\Phi_{E_7}$ root
  system as the vectors represented in the extended Dynkin diagram of
  type $\wt E_7$, then the direction of the axis of the corresponding
  bipartite Coxeter element is $(1,1,1,1,0,0,2,2)$.  There are $20$
  roots that are horizontal with respect to this axis and they split
  into three irreducible factors.  There is a $\Phi_{A_3}$ root system
  formed by the roots $\pm \{ r_{1/2}, r_{1/3}, r_{1/4}, r_{2/3},
  r_{2/4}, r_{3/4} \}$, a $\Phi_{A_2}$ root system formed by the roots
  $\pm \{ r_{56/}, r_{1234/5678}, r_{12345/78} \}$, and a $\Phi_{A_1}$
  root system formed by the roots $\pm \{ r_{5/6} \}$.

  Finally consider the root system of type $E_8$.  If we select the
  vectors $r_{12/}$, $r_{/25}$, $r_{5/6}$, $r_{6/7}$, $r_{78/}$,
  $r_{/38}$, $r_{34/}$, $r_{28/134567}$ and $r_{2367/1458}$ from the
  $\Phi_{E_8}$ root system as the vectors represented in the extended
  Dynkin diagram of type $\wt E_8$, then the direction of the axis of
  the corresponding bipartite Coxeter element is $(1,1,1,1,3,-3,2,2)$.
  There are $28$ roots that are horizontal with respect to this axis
  and they split into three irreducible factors.  There is a
  $\Phi_{A_4}$ root system formed by the roots $r_{1/2}$, $r_{1/3}$,
  $r_{1/4}$, $r_{2/3}$, $r_{2/4}$, $r_{3/4}$, $r_{15/234678}$,
  $r_{25/134678}$, $r_{35/124678}$, $r_{45/123678}$ and their
  negatives, a $\Phi_{A_2}$ root system formed by the roots $\pm \{
  r_{56/}, r_{1234/5678}, r_{123456/78}\}$ and a $\Phi_{A_1}$ root
  system formed by the roots $\pm\{r_{7/8}\}$.

  In each case, Theorem~\ref{thm:bowtie} applies, the interval used to
  define the dual euclidean Artin group of type $\wt E_n$ for $n = 6$,
  $7$ or $8$ is not a lattice and the dual presentation of $\art(\wt
  E_n)$ is not a Garside presentation.
\end{exmp}

At this point we have shown that for each type and for each geometric
equivalence class of Coxeter elements not covered by earlier results,
the resulting interval is not a lattice and the corresponding dual
presentation is not a Garside presentation.  This completes the proof
of Theorem~\ref{main:dual}.  As a final comment we note that the
existence of a uniform reason for the failure of the lattice property
in all of these cases (i.e. reducibility of the horizontal root
system) leads one to hope for the existence of a uniform way to work
around the problem.  This indeed turns out to be the case as Robert
Sulway and I show in \cite{McSu-artin-euclid} where we clarifying the
basic structural properties of all euclidean Artin groups.

\newcommand{\etalchar}[1]{$^{#1}$}
\def\cprime{$'$}
\providecommand{\bysame}{\leavevmode\hbox to3em{\hrulefill}\thinspace}
\providecommand{\MR}{\relax\ifhmode\unskip\space\fi MR }
% \MRhref is called by the amsart/book/proc definition of \MR.
\providecommand{\MRhref}[2]{%
  \href{http://www.ams.org/mathscinet-getitem?mr=#1}{#2}
}
\providecommand{\href}[2]{#2}

%%%%%%%%%%%%%%
\end{document}